\newcommand \A[1]{{\bf (#1)}}
\def\leftB{[\![}
\def\rightB{]\!]}
\newcommand{\Tr}{{{\rm Tr}}}
\newcommand{\bb}{\mathbb}
\newcommand\R{\mathbb{R}}
\newcommand\N{\mathbb{N}}
\newcommand\F{\mathcal{F}}
\DeclareMathSymbol{\leqslant}{\mathrel}{AMSa}{"36}
\DeclareMathSymbol{\geqslant}{\mathrel}{AMSa}{"3E}
\def \N{{\bb N}}
\def \P{{\bb P}}
\def \R{{\bb R}}
\def \I{{\bb I}}
\def \E{{\bb E}}
\def\leftB{[\![}
\def\rightB{]\!]}
\def\G{\mathcal{G}}
\def\F{\mathcal{F}}
\def \ra{\rightarrow}
\newcounter{definition}[section]
\newcommand{\mysection}{\setcounter{equation}{0} \section}
\newtheorem{lemme}{Lemma}
\newtheorem{theo}{Theorem}
\newtheorem{remark}{Remark}
\newtheorem{prop}{Proposition}
\begin{document}

  \author{I. HONOR\'E} 
   \address{Universit\'e d'Evry Val d'Essonne}
   \email{igor.honore@univ-evry.fr} 
   \curraddr{Laboratoire de Math\'ematiques et Mod\'elisation d'\'Evry (LaMME), 23 Boulevard de France, 91037, Evry, France. }
   
   \author{S. MENOZZI}
   \address{Universit\'e d'\'Evry Val d'Essonne and Higher School of Economics}
   \email{stephane.menozzi@univ-evry.fr}
   \curraddr{Laboratoire de Math\'ematiques et Mod\'elisation d'\'Evry (LaMME), 23 Boulevard de France, 91037, \'Evry, France and Laboratory of Stochastic Analysis, HSE, Shabolovka 31, Moscow, Russian Federation.}
   
   \author{G. Pag\`es} 
   \address{Universit\'e Pierre et Marie Curie, Paris 6}
\email{gilles.pages@upmc.fr}
\curraddr{Laboratoire de Probabilit\'es et Mod\`eles Al\'eatoires. Campus de Jussieu, case 188, 4 pl. Jussieu 75252 Paris Cedex 5, France.}


   \title[Non-asymptotic Gaussian Estimates for Ergodic Approximations]{Non-asymptotic Gaussian Estimates for the Recursive Approximation of the Invariant Distribution of a Diffusion}


   \selectlanguage{english}
   \begin{abstract}
   We obtain non-asymptotic Gaussian concentration bounds for the difference between the invariant distribution $\nu $ of an ergodic Brownian diffusion process and the empirical distribution 
   of an approximating scheme with decreasing time step along a suitable class of  (smooth enough) test functions $f$ such that $f-\nu(f)$ is a coboundary of  the infinitesimal generator. We show that these bounds can still be improved when some suitable squared-norms of the diffusion coefficient also  lie in this class. We apply these estimates to design computable non-asymptotic confidence intervals for the approximating scheme. As a theoretical  application, we  finally derive non-asymptotic deviation bounds for the almost sure Central Limit Theorem.  
   \end{abstract}

   \keywords{Invariant distribution, diffusion processes, inhomogeneous Markov chains, non-asymptotic Gaussian concentration, almost sure Central Limit Theorem}

   \date{\today}

   \maketitle


\mysection{Introduction}


%
\subsection{Setting}
The aim of this article is to approach the invariant distribution of the solution of the diffusion equation:
\begin{equation}
\label{eq_diff}
\begin{split}
dY_t &= b(Y_t) dt + \sigma(Y_t) dW_t,\\
\end{split}
\end{equation}
where $(W_t)_{t\ge 0}$ is a Wiener process of dimension $r$ on a given filtered probability space $(\Omega,\G,(\G_t)_{t\ge 0},\P) $, $b : \R^d \rightarrow \R^d,$ and $ \sigma : \R^d \rightarrow \R^d \otimes \R^r $ are assumed to be Lipschitz continuous functions and to satisfy a  mean-reverting assumption \textcolor{black}{in the following sense}. If ${\mathcal A}$ denotes the infinitesimal generator  of the diffusion~\eqref{eq_diff}, there exists a  twice continuously differentiable Lyapunov function $V:\R^d\to (0,+\infty)$ such that $\lim_{|x|\to +\infty} V(x)=+\infty$ and ${\mathcal A} V\le \beta -\alpha V$ where $\beta\!\in \R$ and $\alpha >0$. Such a condition ensures the existence of an invariant distribution. We will also assume uniqueness of the invariant distribution, denoted from now by $\nu$. 
We refer to the monographs by \textcolor{black}{Khasminskii  \cite{khas:80} (see also its augmented second edition~\cite{khasminskii2011stochastic})}, 
or Villani~\cite{vill:09} \textcolor{black}{and to the survey paper \cite{page:01}}, for in-depth discussions on the conditions yielding such existence and uniqueness results.

\medskip
We introduce an approximation algorithm based on an Euler like discretization with decreasing time step,  which may use more general innovations than the Brownian increments. Namely, for the step sequence $(\gamma_k)_{k \geq 1} $ and $n\ge 0$, we define:
\begin{equation}\label{scheme}\tag{S}
X_{n+1}= X_n + \gamma_{n+1} b(X_n) + \sqrt{\gamma_{n+1}} \sigma(X_n) U_{n+1},
\end{equation}
where $X_0\in L^2(\Omega,\F_0,\P)$ and $(U_n)_{n \geq 1}$ is an i.i.d. sequence of centered random variables matching the moments of the Gaussian law on $\R^r $ up to order three, independent of $X_0$. 

We define the empirical (random) \textcolor{black}{occupation} measure of the scheme in the following way. For all $A\in {\mathcal B}(\R^d) $  (where ${\mathcal B}(\R^d)$ denotes the Borel $\sigma $-field on $\R^d$):

\begin{equation}\label{measure}
\nu_n(A) :=\nu_n(\omega,A) :=\frac{\sum_{k=1}^n \gamma_k \delta_{X_{k-1}(\omega)}(A)}{\sum_{k=1}^n \gamma_k}.
\end{equation}
The measure $\nu_n $ is here defined accordingly to the \textit{intrinsic} time scale of the scheme. \textcolor{black}{Namely, $\Gamma_n=\sum_{k=1}^n\gamma_k $ represents the current time associated with the Euler scheme \eqref{scheme}  after $n$ iterations}. Since we are interested in long time approximation, we consider steps $ (\gamma_k)_{k \geq 1}$ such that $\Gamma_n:=\sum_{k=1}^n \gamma_k \underset{n}{\rightarrow} + \infty $. We also assume $\gamma_k \underset{k}{\downarrow} 0 $. 
Observe that, for a bounded $\nu$-$a.s.$ continuous function $f$, it is proved   in~\cite{lamb:page:02} (see {\it e.g.}  Theorem~1), that:
\begin{equation}
\label{CV_ERGO}
\nu_n(f)=\frac{1}{ \Gamma_n}\sum_{k=1}^n \gamma_k f(X_{k-1}) \overset{a.s.}{\underset{n}{\longrightarrow}} \nu(f)=\int_{\R^d}f(x) \nu (dx),
\end{equation}
or equivalently that $\nu_n(\omega,\cdot) \overset{w}{\underset{n}{\longrightarrow}} \nu, \ \P(d\omega)-a.s.$ The above result can be seen as an inhomogeneous counterpart of \textit{stability results} discussed for homogeneous Markov chains in Duflo~\cite{dufl:90}. 
Intuitively, the decreasing steps make the approximation more and more accurate in long time and, therefore, the ergodic empirical mean of the scheme converges to the quantity of interest. Put it differently, there is no bias. This is a significant advantage w.r.t. a more \textit{naive} discretization method that would rely on a constant step scheme. Indeed, even if this latter approach gains in simplicity, taking $\gamma_k=h>0$ in~\eqref{scheme} would lead to replace the r.h.s. of~\eqref{CV_ERGO} by the quantity $\nu^h(f):= \int_{\R^d}f(x) \nu^h (dx)$, with $\nu^h $ denoting the invariant distribution of the Euler scheme with step $h$. In such a case, for the analysis to be complete, one needs to investigate the difference $\nu-\nu^h $ through the corresponding continuous and discrete Poisson problems. We refer to Talay \textit{et al.}~\cite{tala:tuba:90},~\cite{tala:02} for a precise presentation of this approach.

Now, once~\eqref{CV_ERGO} is available, the next question naturally concerns the rate of that convergence. This was originally
investigated by Lamberton and Pag\`es~\cite{lamb:page:02} for functions $f$ of the form  $f-\nu(f)={\mathcal A}\varphi $,
i.e. $f-\nu(f) $ is a \textit{coboundary} for ${\mathcal A}$. The specific reason for focusing on such a  class of functions is that an invariant distribution $\nu$ is characterized as a solution  in the distribution sense of the stationary Fokker-Planck equation
 ${\mathcal A}^*\nu=0 $ (where ${\mathcal A}^* $ stands for the adjoint of ${\mathcal A}$). Thus, for  smooth enough functions $\varphi$ (at least ${\mathcal C}^2(\R^d,\R)$), one has  $\nu({\mathcal A}\varphi)=\int_{\R^d}{\mathcal A}\varphi(x)\nu(dx)=0 $. The authors then investigate the weak convergence  of   
 $\nu_n(f)-\nu(f)$ once suitably renormalized. However, in these results, the assumptions   are made on the function $\varphi$ itself rather than on $f$.
 To overcome this limitation and exploit directly some assumptions on the function $f$ requires to solve 
the Poisson equation ${\mathcal A}\varphi=f-\nu(f) $. 
 This is precisely for this step that some structure conditions are needed,  namely (hypo)ellipticity or confluence conditions. We refer for instance to the work of Pardoux and Veretennikov~\cite{pard:vere:01},  Rothschield and Stein~\cite{roth:stei:76} or Villani~\cite{vill:09} who discuss the solvability of the Poisson problem under some ellipticity or hypoellipticity assumptions. We also mention the work of Pag\`es and Panloup~\cite{page:panl:14} who exploit some confluence conditions allowing to handle for instance the case of an Ornstein-Uhlenbeck process with degenerate covariance matrix. \textcolor{black}{We refer to Sections~\ref{SEC_PRACTICAL_MAIN_RESULTS} and~\ref{PRACT_DEV_BD} for precise assumptions giving the uniqueness of the invariant distribution of~\eqref{eq_diff} and the expected smoothness properties for the associated Poisson problem}. 

In the current paper, our goal is  to establish \textcolor{black}{for this recursive procedure  a non-asymptotic Gaussian control for the deviations of the quantity $\nu_n(f) - \nu(f)$ for  {\em possibly unbounded} Lipschitz continuous functions $f$}. Such non-asymptotic bounds are crucial in many applicative fields. Indeed, for specific practical simulations, it is not always possible to run ergodic means for very large values of $n$. It will be direct to derive, as a by-product of our deviations estimates, some  \textit{computable} non-asymptotic confidence intervals. 
 A specific feature of such non-asymptotic deviation inequalities is that their accuracy depends again on the  status of the diffusion coefficient $\sigma$  with  respect to  the Poisson equation. Thus,  if  $\|\sigma\|^2-\nu(\|\sigma\|^2)={\mathcal A}\vartheta $ is a coboundary \textcolor{black}{(where $\|\cdot\|$ denotes a matrix norm)}, we manage to improve our analysis, to derive better concentration bounds in a certain deviation range 
 as well as some additional deviation regimes. Also, this additional study seems rather efficient to capture the numerical behavior of the empirical deviations. We refer to Section~\ref{SEC_COBORD} and~\ref{SEC_NUM} for details about these points. Eventually, our main deviation results allow to provide deviation inequalities  for plain Lipschitz  continuous sources $f$ in the ergodic approximation, by using  a suitable regularization procedure, \textcolor{black}{as established in Theorem~\ref{THEO_CTR_LIP}}. \textcolor{black}{As expected, dealing with this general class of functions requires more stringent  constraints on the time steps, that must be \textit{small} enough, and prevents from obtaining the fastest convergence rates (see again Theorem~\ref{THEO_CTR_LIP} and Section~\ref{REG_LIP_SOURCE})}.

\textcolor{black}{The main feature of the sequence~\eqref{CV_ERGO} of  weighted empirical measures is that it  targets the \textit{true} invariant distribution $\nu$ of the continuous time diffusion. The price to pay is the use of an Euler scheme with decreasing step which is a non-homogeneous Markov chain. This  induces new difficulties
  compared to the extensive literature on deviation inequalities for ergodic homogeneous Markov chains.
   In particular, our approximation procedure  produces some remainder terms that need  to be controlled accurately enough  in a non-asymptotic way} to produce tractable deviation inequalities asymptotically close to their counterparts for the diffusion itself. This a major difficulty compared to a $CLT$ where these remainder terms are simply requested to go to $0$ \textcolor{black}{fast enough}.
 
 As mentioned above and like for the $CLT$ (see~\cite{bhat:82} for the diffusion or~\cite{lamb:page:02} for the weighted empirical measures $\nu_n$), these deviation inequalities are naturally established for coboundaries $f-\nu(f)={\mathcal A}(\varphi)$, the assumptions being made on $\varphi$. Our second objective in  this paper is to state our results so that all assumptions could be read  on the source function $f$ itself. This \textcolor{black}{first} requires to solve the Poisson equation in that spirit, that means deriving pointwise regularity results on $\varphi$ from those made on $f$. \textcolor{black}{Again, for Lipschitz sources, this step will require an appropriate regularization procedure}.

 In particular, we will not rely on the Sobolev regularity (\textcolor{black}{see e.g. Pardoux and Veretennikov \cite{pard:vere:01}}) but rather on some Schauder estimates in line with the works by Krylov and Priola~\cite{kryl:prio:10}, which allow to benefit from the elliptic regularity for operators with unbounded coefficients. For more details, we refer to the introduction of Section~\ref{SEC_POISS}.

\subsection{Assumptions and Related Asymptotic Results}\label{SEC_ASS}
 From now on, we will extensively use the following notations. 
 
 For a given step sequence $(\gamma_n)_{n\ge 1} $, we denote: 
\begin{eqnarray*}
\forall \ell \in \R,\ \Gamma_n^{(\ell)} :=\sum_{k=1}^n \gamma_k^\ell, \ 
\Gamma_n := \sum_{k=1}^n \gamma_k = \Gamma_n^{(1)}.
\end{eqnarray*}
In practice, we will consider time step sequences: $\gamma_n \asymp \frac{1}{n^{\theta}}$ with $\theta \in (0, 1]$, where for two sequences $(u_n)_{n\in \N}, \ (v_n)_{n\in \N}$ the notation $u_n \asymp v_n$ means that $\exists n_0 \in \N, \ \exists C\geq 1$ such that $\forall n \geq n_0, \ C^{-1} v_n \leq u_n \leq C v_n$.

\textcolor{black}{For a vector $v\in \R^k,\ k\in \{d,r\}$, we denote by $|v|:=(\sum_{j=1}^k v_j^2)^{\frac 12} $ its \textcolor{black}{(canonical)} Euclidean norm}. Also,  for a function $\psi:\R^q \rightarrow \R^d $,  we set $\|\psi\|_\infty:=\sup_{x\in \R^q} |\psi(x)| $.

\subsection*{Hypotheses}

\begin{trivlist}
\item[\A{C1}] The random variable $X_0$ is supposed to be sub-Gaussian, {\it i.e.} \textcolor{black}{its} square \textcolor{black}{is} exponentially integrable up to some threshold. Namely, there exists $\lambda_0 \in \R^*_+$ such that:
\begin{equation*}\label{cond_X0}
\forall \lambda < \lambda_0,\quad \E\,[\exp(\lambda |X_0|^2)] < +\infty.
\end{equation*}

\item[\A{GC}]
The \textcolor{black}{$\mu $-distributed} i.i.d. innovation sequence $(U_n)_{n\ge 1} $ is such that $\E\,[U_1]=0$ and for all $(i,j,k)\in \left\{ 1,\cdots,r\right\}^3 $, $\E\,[U_1^i U_1^j]=\delta_{ij},\ \E\,[U_1^iU_1^jU_1^k]=0 $. Also, $(U_n)_{n\ge 1} $ and $X_0$ are independent. Eventually, $U_1$  satisfies the following Gaussian concentration property, {\it i.e.} for every $1-$Lipschitz continuous function $g:\R^r\rightarrow \R$ and every $\lambda>0 $:
\begin{equation*}
\E\big[\exp(\lambda g(U_1))\big] \leq \exp\left( \lambda \E\,[g(U_1)] + \frac{ \lambda^2}{2}\right).
\end{equation*}
Observe that if $U_1\overset{({\rm law})}{=} {\mathcal N}(0,I_r) $ or $U_1\overset{({\rm law})}{=}(\frac 12(\delta_1+\delta_{-1}))^{\otimes r}  $, {\it i.e.} for Gaussian or symmetric Bernoulli increments which are the most commonly used sequences for the innovations, the above identity holds. On the other hand, what follows can be adapted almost straightforwardly for a wider class of sub-Gaussian distributions satisfying that \textcolor{black}{for some $\varpi>0 $} and  \textcolor{black}{for all} $\lambda>0 $:
\begin{equation}
\label{GEN_GC}
\E\big[\exp(\lambda g(U_1))\big] \leq \exp\left( \lambda \E\,[g(U_1)] + \frac{ \varpi \lambda^2}{4}\right),
\end{equation}
\textcolor{black}{which yields that} for all \textcolor{black}{$r> 0$}, $\P[|U_1|\ge r]\le 2\exp(-\frac{r^2}{\varpi}) $ (sub-Gaussian concentration of the innovation). \textcolor{black}{The case} $\varpi=2 $ corresponds to the standard Gaussian concentration. This is also the constant in the logarithmic Sobolev inequality \textcolor{black}{fulfilled} by the standard Gaussian measure.\\

\item[\A{C2}]\label{LABEL_BD}
There exists a positive constant $\kappa $ such that, 
\begin{align*}\label{B}
\sup_{x\in \R^d} \|\sigma(x)\|^2 \leq \kappa,
\end{align*}
where \textcolor{black}{
$\|\sigma(x)\| $ stands for the operator norm of $\sigma(x) $, i.e. $\|\sigma(x)\|=\sup_{z\in \R^r, |z|\le 1} |\sigma(x) z| $ \textcolor{black}{(keep in mind that $\|\sigma(x)\|= \| \sigma^*(x)\|= \|\sigma\sigma^*(x)\|^{\frac 12}$)}}. We then set $\|\sigma\|_\infty:=\sup_{x\in \R^d} \|\sigma(x)\| $.\\


\item[($\mathbf{\mathcal{L}_V}$)]

\textcolor{black}{There exists a Lyapunov function $V: \mathbb{R}^d \longrightarrow [v^*, +\infty[$, with $v^* >0$, satisfying the following conditions}:

\begin{enumerate}[label=\roman*)]

\item \textcolor{black}{Regularity-Coercivity.} $V$ is a $\mathcal{C}^2$ function,  $\| D^2 V \| _{\infty} <+ \infty$, and $\lim_{|x| \rightarrow \infty} V(x) = + \infty$.

\item \textcolor{black}{Growth control.} There exists $C_{_V} \in (0,+\infty)$ such that for all $x \in \R^d$:
\begin{equation*}\label{L}
|\nabla V(x)|^2 + |b(x)|^2 \leq C_{_V} V(x).
\end{equation*}

\item \textcolor{black}{Stability}. Let $\mathcal{A}$ be the infinitesimal generator associated with the diffusion Equation~\eqref{eq_diff}, defined for all $\varphi \in C_0^2(\R^d,\R) $ and for all $x\in \R^d $ by:
\begin{eqnarray*}
\mathcal{A} \varphi(x)=  \textcolor{black}{\langle b(x), 
\nabla \varphi(x) \rangle} + \frac1{2} \Tr \big( \Sigma(x) D^2 \varphi(x)\big),\ \Sigma(x):=\sigma\sigma^*(x),
\end{eqnarray*}
where, for two vectors $v_1,v_2 \in \R^d $, the symbol $\textcolor{black}{\langle v_1, 
v_2\rangle} $ stands for the canonical inner product of $v_1$ and $v_2 $ and \textcolor{black}{for $M\in \R^d\otimes \R^d, \ \Tr(M) $ denotes the trace of the matrix $M$}.
 
There exist $\alpha_{_V} >0$, $\beta_V \in \R^+$ such that for all $x \in \R^d$,
\begin{equation*}
\mathcal{A} V(x) \leq -\alpha_{_V} V(x) + \beta_V.
\end{equation*}
\end{enumerate}
As a consequence of \A{${\mathcal L}_V $} i), there exist constants $K$ and $ \bar c$ such that for $|x|\ge K,\ |V(x)| \le 
\bar c|x|^2 $, \textcolor{black}{which in turn implies, from \A{${\mathcal L}_V $} ii),} that  $|b(x)|\le \sqrt{C_{_V}\bar c}|x|$. \\

\item[\A{U}] There \textcolor{black}{exists}  a unique invariant \textcolor{black}{distribution}, \textcolor{black}{denoted from now on by  $\nu $, for}  Equation~\eqref{eq_diff}. \\
\item[\A{S}] For \textcolor{black}{a Lyapunov function} $V$ satisfying \A{${\mathcal L}_V $},
we assume that the \textcolor{black}{step} sequence $(\gamma_k)_{k\ge 1} $ satisfies for all $k\ge 1, \gamma_k\le \frac 12 \min(\frac{1}{\sqrt{C_{_V}\bar c}}, \frac{\alpha_{_V}}{C_{_V} \| D^2 V \|_{\infty}})$.\\ 

Condition \A{S}~ means that we assume the time steps are sufficiently \textit{small} w.r.t. the upper bounds of the coefficients and the Lyapunov function.

\end{trivlist}

\begin{remark}
\label{THE_REM_COEFF}

We have assumed \A{U} without imposing some specific non-degeneracy conditions. Observe that \A{${\mathcal L}_{{\mathbf V}} $} yields existence \textcolor{black}{of an invariant distribution} (see \textcolor{black}{e.g. Chapter 4.9 in} ~\cite{ethi:kurz:97}). Additional structure conditions on the coefficients ((hypo-)ellipticity~\cite{khasminskii2011stochastic},~\cite{pard:vere:01},\textcolor{black}{~\cite{pard:vere:03},~\cite{pard:vere:05}}, ~\cite{vill:09} or confluence~\cite{page:panl:14}) then yield uniqueness.


Assumption \A{S} is a technical condition which is exploited in order to derive the non-asymptotic controls of Theorem~\ref{ineq_fin} (see especially the proof of Proposition~\ref{expV_int} below).
\end{remark}


Observe that, as soon as conditions \A{C2}, \A{$\mathbf{{\mathcal L}_V} $}, \A{U} are satisfied and $\E\,[U_1]=0, \E[U_1^{\otimes 3}]=0$, the following  Central Limit Theorem (CLT) holds (see Theorems 9, 10 in~\cite{lamb:page:02}).
\begin{theo}[CLT] \label{theo}
Under \A{C2}, \A{$\mathbf{{\mathcal L}_V} $}, \A{U}, if $\E\,[U_1]=0,\ \textcolor{black}{\E\,[U_1^{\otimes 3}]=0}$ \textcolor{black}{and $ \E[V(X_0)]<+\infty$}, 
we have the following results.
\begin{enumerate}
\item[$(a)$] \textit{Fast decreasing step.} If  $\lim_n \frac{\Gamma_n^{(2)}}{\sqrt{\Gamma_n}}  
= 0$  and $\E\,[|U_1|^6] < +\infty$, then, for any  Lipschitz continuous function $\varphi$ in $\mathcal{C}^3(\R^d,\R)$ with $D^2 \varphi$ and $D^3 \varphi$ bounded, one has \textcolor{black}{(with $(\mathcal{L})$ denoting  weak convergence)}
\begin{eqnarray*}
\sqrt{\Gamma_n} \nu_n (\mathcal{A} \varphi) \overset{(\mathcal{L})}{\longrightarrow} \mathcal{N}\left(0, \int_{\R^d} | \sigma^* \nabla \varphi|^2 d\nu \right).
\end{eqnarray*}

\item[$(b)$] \textit{Critical and slowly decreasing step.} If $\lim_n \frac{\Gamma_n^{(2)}}{\sqrt{\Gamma_n}} 
= \widetilde{\gamma} \in ]0 ,+ \infty ]$ and if $\E\,[|U_1|^8] < +\infty$, then for every Lipschitz continuous  function $\varphi \in \! \mathcal{C}^4(\R^d,\R) $ with $(D^i \varphi)_{i\in \{2,3,4\}}$ bounded:
\begin{eqnarray*}
\sqrt{\Gamma_n} \nu_n (\mathcal{A} \varphi) &\overset{(\mathcal{L})}{\longrightarrow}& \mathcal{N}\left(\widetilde{\gamma} m, \int_{\R^d} | \sigma^* \nabla \varphi|^2 d\nu \right) \hspace{0.3cm} {\rm if} \hspace{0.1cm}   \widetilde{\gamma} <+\infty,\ (\text{critical decreasing step})\\
\frac{\Gamma_n}{\Gamma_n^{(2)}} \nu_n ({\mathcal A} \varphi ) &\overset{\P}{\longrightarrow} & m \hspace{0.3cm} {\rm if} \hspace{0.1cm}   \widetilde{\gamma} = +\infty,\ (\text{slowly decreasing step}),
\end{eqnarray*}

\begin{eqnarray*}
\mbox{where } \hskip 1,5cm m &:=& -  \int_{\R^d} \Big( \frac{1}{2} D^2 \varphi(x) b(x) ^{\otimes 2} + \Phi_4(x) \Big) \nu(dx),\hskip 2,5cm
\\
\mbox{with }\hskip 1cm  \Phi_4(x) &:=& \int_{\R^r} \Big( \frac{1}{2} \langle D^3 \varphi(x) b(x), ( \sigma(x) u ) ^{\otimes 2} \rangle + \frac{1}{24} D^4 \varphi(x) ( \sigma(x) u)^{\otimes 4} \Big) \mu (du)\hskip 2cm
\end{eqnarray*}
and $\mu$ denotes the distribution of the innovations $(U_k)_{k\ge 1} $. In the above definition of $\Phi_4 $, the term $D^3\varphi $ stands for the order $3$ tensor $(\partial_{x_i,x_j,x_k}^3 \varphi)_{(i,j,k)\in \leftB 1,d\rightB^3} $ and we denote, for all $x\in \R^d$, by $D^3\varphi(x)b(x)$ the $\R^d\otimes \R^d  $ matrix with entries $\big(D^3\varphi(x)b(x)\big)_{ij}=\sum_{k=1}^d (D^3 \varphi(x))_{ijk}b_k(x),\ (i,j)\in \leftB 1,d\rightB^2 $.

\end{enumerate}
\end{theo}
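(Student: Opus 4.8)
The plan is to adapt the martingale decomposition method of Lamberton and Pagès~\cite{lamb:page:02}. Write $\Delta_k := X_k - X_{k-1} = \gamma_k b(X_{k-1}) + \sqrt{\gamma_k}\,\sigma(X_{k-1})U_k$. The first step would be a Taylor expansion of $\varphi$ at $X_{k-1}$ followed by conditioning: using $\E[U_k \mid \mathcal{F}_{k-1}] = 0$ and $\E[U_kU_k^* \mid \mathcal{F}_{k-1}] = I_r$ one gets
\begin{equation*}
\E\big[\varphi(X_k) - \varphi(X_{k-1}) \mid \mathcal{F}_{k-1}\big] = \gamma_k\,\mathcal{A}\varphi(X_{k-1}) + R_k .
\end{equation*}
The key point is that the a priori $O(\gamma_k^{3/2})$ contribution $\tfrac16 D^3\varphi(X_{k-1})\,\E[(\sigma(X_{k-1})U_k)^{\otimes 3}\mid\mathcal{F}_{k-1}]$ vanishes thanks to the third-moment matching $\E[U_k^{\otimes 3}] = 0$, so that $R_k$ is an $O(\gamma_k^2)$ drift: $R_k = \gamma_k^2\,\psi(X_{k-1}) + r_k$ with $\psi := \tfrac12 D^2\varphi\,b^{\otimes 2} + \Phi_4$ (the function $\Phi_4$ gathering the order $\gamma_k^2$ parts of the third- and fourth-order Taylor terms — this is where $\varphi\in\mathcal{C}^4$ with bounded $D^4\varphi$ enters in case $(b)$), and $r_k$ a genuinely smaller Taylor remainder controlled in $L^1$ by the bounds on $D^2\varphi,D^3\varphi$ (resp. $D^4\varphi$), the uniform $L^p$ estimates on $X_n$ and hence on $b(X_n)$ that follow from \A{$\mathbf{\mathcal{L}_V}$} and \A{S}, and the assumed moments of $U_1$. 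Setting $\Delta M_k := (\varphi(X_k) - \varphi(X_{k-1})) - \E[\varphi(X_k) - \varphi(X_{k-1})\mid\mathcal{F}_{k-1}]$ and $M_n := \sum_{k=1}^n \Delta M_k$, summation over $k$ yields the decomposition
\begin{equation*}
\sum_{k=1}^n \gamma_k\,\mathcal{A}\varphi(X_{k-1}) = \big(\varphi(X_n) - \varphi(X_0)\big) - M_n - \sum_{k=1}^n R_k .
\end{equation*}

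The second step would analyse these three terms after the relevant renormalisation. Since $\varphi$ is Lipschitz and $\sup_n\E\,|X_n|^2 < +\infty$, the boundary term $\varphi(X_n) - \varphi(X_0)$ is bounded in $L^1$, hence negligible once divided by $\sqrt{\Gamma_n}\to+\infty$ (resp. by $\Gamma_n/\Gamma_n^{(2)}$ in the slowly decreasing regime). For $M_n/\sqrt{\Gamma_n}$ I would invoke the central limit theorem for arrays of martingale increments: a second Taylor expansion, again using $\E[U_k^{\otimes 3}] = 0$ to cancel the cross term between the first- and second-order parts of $\Delta M_k$, gives $\E[(\Delta M_k)^2\mid\mathcal{F}_{k-1}] = \gamma_k\,|\sigma^*(X_{k-1})\nabla\varphi(X_{k-1})|^2 + O(\gamma_k^2)$, so that
\begin{equation*}
\frac{\langle M\rangle_n}{\Gamma_n} = \nu_n\big(|\sigma^*\nabla\varphi|^2\big) + O\!\Big(\tfrac{\Gamma_n^{(2)}}{\Gamma_n}\Big) \ \overset{a.s.}{\underset{n}{\longrightarrow}}\ \int_{\R^d} |\sigma^*\nabla\varphi|^2\, d\nu ,
\end{equation*}
by~\eqref{CV_ERGO} applied to the continuous, bounded (by \A{C2} and the Lipschitz regularity of $\varphi$) function $|\sigma^*\nabla\varphi|^2$ and because $\Gamma_n^{(2)}/\Gamma_n\to 0$. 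The Lindeberg condition would follow from the Lyapunov bound $\sum_{k\le n}\E[|\Delta M_k|^{2+\delta}\mid\mathcal{F}_{k-1}] = O(\Gamma_n^{(1+\delta/2)}) = o(\Gamma_n^{1+\delta/2})$, granted by $\E\,|U_1|^6 < +\infty$ in case $(a)$ (resp. $\E\,|U_1|^8 < +\infty$ in case $(b)$), whence $M_n/\sqrt{\Gamma_n} \overset{\mathcal L}{\longrightarrow} \mathcal{N}\big(0,\int|\sigma^*\nabla\varphi|^2 d\nu\big)$.

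The last step identifies the remainder $\tfrac1{\sqrt{\Gamma_n}}\sum_{k\le n} R_k$. In case $(a)$, $\big\|\sum_{k\le n} R_k\big\|_{L^1} = O(\Gamma_n^{(2)})$ and the hypothesis $\Gamma_n^{(2)}/\sqrt{\Gamma_n}\to 0$ makes it negligible, so Slutsky's lemma gives the announced centered Gaussian limit. In case $(b)$, with $R_k = \gamma_k^2\psi(X_{k-1}) + r_k$ and $\sum_{k\le n}\|r_k\|_{L^1} = o(\Gamma_n^{(2)})$, the weighted version of~\eqref{CV_ERGO} with weights $\gamma_k^2$ (see~\cite{lamb:page:02}) gives $\tfrac1{\Gamma_n^{(2)}}\sum_{k\le n}\gamma_k^2\psi(X_{k-1}) \overset{a.s.}{\underset{n}{\longrightarrow}} \nu(\psi) = -m$, so that $\tfrac1{\sqrt{\Gamma_n}}\sum_{k\le n} R_k = -\tfrac{\Gamma_n^{(2)}}{\sqrt{\Gamma_n}}\,m + o\big(\tfrac{\Gamma_n^{(2)}}{\sqrt{\Gamma_n}}\big) + o(1)$; if $\widetilde\gamma < +\infty$ this converges to $-\widetilde\gamma m$ and, combined with the martingale limit, gives $\sqrt{\Gamma_n}\,\nu_n(\mathcal{A}\varphi) \overset{\mathcal L}{\longrightarrow} \mathcal{N}\big(\widetilde\gamma m,\int|\sigma^*\nabla\varphi|^2 d\nu\big)$; if $\widetilde\gamma = +\infty$ I would normalise by $\Gamma_n/\Gamma_n^{(2)}$ instead, in which case $M_n/\Gamma_n^{(2)}\to 0$ in probability (since $\langle M\rangle_n/(\Gamma_n^{(2)})^2 = (\langle M\rangle_n/\Gamma_n)\,(\Gamma_n/(\Gamma_n^{(2)})^2)\to 0$), the boundary term vanishes and $\tfrac1{\Gamma_n^{(2)}}\sum_{k\le n} R_k \to -m$, giving $\tfrac{\Gamma_n}{\Gamma_n^{(2)}}\nu_n(\mathcal{A}\varphi)\overset{\P}{\longrightarrow} m$. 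The hard part is precisely the estimate $\sum_{k\le n}\|r_k\|_{L^1} = o(\Gamma_n^{(2)})$ (resp. $\|R_k\|_{L^1} = O(\gamma_k^2)$ in case $(a)$): one must extract from the higher-order Taylor terms exactly the order $\gamma_k^2$ density $\psi$ that builds the bias $m$ — this is where the third-moment cancellation and the moment assumptions on $U_1$ are used — and prove that everything else is negligible on the intrinsic time scale $\Gamma_n$; unlike for a bare CLT, where such remainders only need to tend to $0$, this quantitative bookkeeping is the genuine difficulty.
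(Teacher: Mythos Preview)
The paper does not prove Theorem~\ref{theo}: it is quoted as a known result, with a direct reference to Theorems~9 and~10 in Lamberton--Pag\`es~\cite{lamb:page:02}. So there is no ``paper's own proof'' to compare against. Your sketch is, in spirit, exactly the martingale-decomposition argument of~\cite{lamb:page:02}: Taylor expand $\varphi(X_k)-\varphi(X_{k-1})$, separate a martingale increment from an $\mathcal{F}_{k-1}$-measurable bias of order $\gamma_k^2$ (using $\E[U_1^{\otimes 3}]=0$ to kill the $\gamma_k^{3/2}$ term), apply a martingale array CLT via the bracket identification and a Lyapunov condition, and handle the bias through the $\gamma_k^2$-weighted ergodic theorem. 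This is also the decomposition the present paper re-derives for its own purposes in Lemma~\ref{decomp_nu} and equations~\eqref{DECOUP}--\eqref{REWRITE_NU_APHI}, though there the splitting is organised slightly differently (deterministic drift part $T_{k-1,d}$ versus random part $T_{k-1,r}$, then centering), and the goal is non-asymptotic concentration rather than a limit theorem.

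Two small points on your sketch. First, your closing comment that the quantitative estimate on the remainders is ``the genuine difficulty, unlike for a bare CLT where such remainders only need to tend to $0$'' is the message of the present paper about its \emph{non-asymptotic} Theorem~\ref{ineq_fin}, not about Theorem~\ref{theo}; for the CLT itself one really only needs $o_{\P}(1)$ after normalisation, and your $L^1$ bounds of order $\Gamma_n^{(2)}$ (resp.\ $o(\Gamma_n^{(2)})$) suffice. Second, the ergodic convergence $\nu_n(|\sigma^*\nabla\varphi|^2)\to\nu(|\sigma^*\nabla\varphi|^2)$ and its $\gamma_k^2$-weighted analogue for $\psi$ require slightly more than~\eqref{CV_ERGO} for bounded continuous functions, since $\psi$ has polynomial growth through $b^{\otimes 2}$; in~\cite{lamb:page:02} this is handled via the Lyapunov moment bounds, and you should flag that dependence explicitly.
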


\begin{remark}\label{rem_rate_BIS}
Let us specify that for a step sequence $(\gamma_n)_{n\in \N} $ such that $\gamma_n\asymp n^{-\theta} $, $\theta\in (0,1] $, it is easily checked that case \textit{(a)} occurs for $\theta\in (\frac 13 ,1]$ for which $\frac{\Gamma_n^{(2)}}{\sqrt{\Gamma_n}}\underset{n}{\ra} 0 $. In case \textit{(b)}, that is for $\theta\in (0,\frac13] $, $\frac{\Gamma_n^{(2)}}{\sqrt{\Gamma_n}}\underset{n}{\ra} \widetilde \gamma$, with $\widetilde \gamma<+\infty $ for $\theta=\frac 13 $ and $\widetilde \gamma=+\infty $ for $\theta\in (0,\frac 13). $
\end{remark}

Let us mention that, when  $\textcolor{black}{\lim_n} \frac{\Gamma_n^{(3/2)}}{\sqrt{\Gamma_n}}<+\infty $, {\it i.e.} $\gamma_n \asymp n^{-\theta}, \theta\in(1/2,1] $, the \textcolor{black}{CLT} of point \textit{(a)} holds without the condition $\E\,[U_1^{\otimes 3}] =0$ \textcolor{black}{provided} $\E\,[|U_1|^4]<+\infty $ (see Theorem 9 in~\cite{lamb:page:02}). 
Moreover, the boundedness condition \A{C2} can be relaxed to derive the CLT, which holds provided $\lim_{|x|\rightarrow +\infty}\frac{|\sigma^*\nabla \varphi(x)|^2}{V(x)}=0 $ (strictly sublinear diffusion) in case \textit{(a)} and $\sup_{x\in \R^d}\frac{|\sigma^*\nabla \varphi(x)|^2}{V(x)}<+\infty $ (sublinear diffusion) in case \textit{(b)}. We refer again to Theorems 9 and 10 in~\cite{lamb:page:02} for further considerations.

%

\begin{remark}
\label{rem_rate}
The reader should have in mind that an  ergodic result similar to the one stated in the fast decreasing step setting 
holds for the diffusion itself under the same structure assumptions, {\it i.e.} \A{C2}, \A{${\mathcal L}_{{\mathbf V}} $}, \A{U} (see Bhattacharya~\cite{bhat:82}). In fact \A{C2} can be partially relaxed as well, as  mentioned above.
Precisely, 
$$
\frac 1{\sqrt t} \int_0^t \textcolor{black}{{\mathcal A}\varphi}(Y_s) ds \overset{{\mathcal L}}{\longrightarrow}  {\mathcal N}\Big(0,\int_{\R^d} |\sigma^*\nabla \varphi|^2d\nu\Big)\quad \mbox{ as }\quad t\rightarrow+\infty.
$$
Note that the asymptotic variance corresponds to the usual integral of the ``\textit{carr\'e du champ}" w.r.t. to the invariant distribution (see again Bhattacharya \cite{bhat:82} or the monograph by  Bakry \textit{et al.}~\cite{bakr:gent:ledo:14}), i.e.:
$$\int_{\R^d}|\sigma^* \nabla \varphi(x)|^2 \nu(dx)=-2\int_{\R^d} \langle {\mathcal A} \varphi,\varphi\rangle(x) \nu(dx).$$
 In both settings,  the normalization is the same:   $\sqrt{t}$ for the diffusion and $\sqrt{\Gamma_n}$ for the scheme. Except that, 
as emphasized by Theorem~\ref{theo}, for slowly decreasing step~-- when $\theta<1/3$~--~the time discretization effect becomes prominent and ``hides" the CLT so that $\theta=1/3 $ (critical value between ``fast" and ``slow" settings) yields the fastest rate with a biased CLT.
\end{remark}
\begin{remark}
We would like to mention  that, in the biased case~$(b)$, for steps of the form $\gamma_k=\gamma_0 k^{-1/3},\ k\ge 1 $, it is  important for  a practical implementation  to choose $\gamma_0 $ in  an appropriate  way, namely by  minimizing the function $\gamma_0\mapsto c_1\gamma_0+c_2\gamma_0^{-1/2}, c_1=\lim_n \frac{\sum_{k=1}^{n}k^{-2/3}}{(\sum_{k=1}^n k^{-1/3})^{1/2}}, c_2=\int_{\R^d} | \sigma^* \nabla \varphi|^2 d\nu  $, which corresponds to the mean-variance contribution deriving from the biased limit Theorem. Of course, $c_2$ is usually unknown, and the concrete optimization has to be performed replacing $c_2$ by a computable estimate, like for instance upper bounds, i.e. $c_2\le \|\sigma\|_\infty\|\nabla \varphi\|_\infty $.
\end{remark}

The purpose of this work is to obtain non-asymptotic deviation results which match with the above CLT.
In the current ergodic framework, the very first non-asymptotic results were 
established for the Euler scheme with constant time step by Malrieu and Talay in~\cite{MalrieuTalay} when the diffusion coefficient $\sigma$  in~\eqref{eq_diff} is constant. The key tool in their approach consists in establishing a Log Sobolev inequality, which implies Gaussian concentration, for the Euler scheme. This approach allows to easily control the invariant distribution associated with the diffusion process~\eqref{eq_diff}, see  {\it e.g.}  Ledoux~\cite{ledo:99} or Bakry \textit{et al.}~\cite{bakr:gent:ledo:14} in a general framework. However Log Sobolev, and even Poincar\'e, inequalities turn out to be rather \textit{rigid} tools and are not very well adapted  for discretization schemes like~\eqref{scheme} with or without decreasing steps.

Our approach relies on martingale techniques, which were  already a crucial tool to establish the asymptotic results of~\cite{lamb:page:02} and have been successfully used in Frikha and Menozzi~\cite{frik:meno:12} as well to establish non-asymptotic bounds for the regular Monte Carlo error associated with the Euler discretization of a diffusion over a finite time interval $[0,T] $ and a class of stochastic algorithms of Robbins-Monro type.  Roughly speaking, for a given $n$, we decompose the quantity $\sqrt{\Gamma_n}\nu_n({\mathcal A}\varphi) $ as $M_n+R_n $ where $(M_k)_{k\ge 0} $ is a martingale which has Gaussian concentration and $R_n $ is a remainder term to be controlled in a non-asymptotic way.

We can as well refer to the recent work by Dedecker and Gou\"ezel~\cite{dede:goue:15} who also use a martingale approach to derive non-asymptotic deviation bounds for separately bounded functionals of geometrically ergodic Markov chains on a general state space. 

Let us also mention that many non-asymptotic results have been obtained 
based on functional inequalities. Bolley, Guillin and Villani~\cite{boll:guil:vill:07} derived non-asymptotic controls for the deviations of the Wasserstein distance between a reference measure and its empirical counterpart, establishing a non-asymptotic version of the Sanov theorem. Deviation estimates for sums of weakly dependent random variables (with sub exponential mixing rates) have been considered in Merlev\`ede \textit{et al.}~\cite{merl:peli:rio:10}. From a more dynamical viewpoint, let us mention the work of Joulin and Ollivier~\cite{joul:olli:10}, who introduced for rather general homogeneous Markov chains a kind of \textit{curvature} condition to derive a spectral gap for the chain, and therefore an exponential convergence of the marginal laws towards the stationary distribution. We also mention a work of Blower and Bolley~\cite{blow:boll:06}, who obtain Gaussian concentration properties for deviations of functional of the path for metric space valued homogeneous Markov chains or Boissard~\cite{bois:11} who established non-asymptotic deviation bounds for the Wasserstein distance between the marginal distributions  and the stationary law, still in the homogeneous case. 
The common idea of these works is to prove some contraction properties of the transition kernel of the Markov chain in Wasserstein metric. However,
this usually requires to have some continuity in Wasserstein metric for the transition law involved, see  {\it e.g.}  condition  \textit{(ii)} in Theorems 1.2 and 2.1 of~\cite{blow:boll:06}.  Checking such continuity conditions can be difficult in practice. Sufficient conditions, which require absolute continuity and smoothness of the transition laws are given in Proposition 2.2 of~\cite{blow:boll:06}.

Though potentially less sharp for the derivation of constants, the adopted martingale-based approach in this work turns out to be  rather simple, robust and can be  very naturally adapted to both discrete innovations and inhomogeneous time steps dynamics like the one we currently consider. 

It should as well allow to control deviations for functionals of the path, in the spirit of those considered in~\cite{page:panl:12}, \cite{page:panl:14}. Also, the approach could possibly extend to diffusions with less stringent Lyapunov conditions, like the weakly mean reverting drifts considered in~\cite{lamb:page:03}, or even to more general ergodic Markov processes, \textcolor{black}{see e.g. Pag\`es and Rey \cite{page:rey:17}}. 
These aspects will concern further research.

\medskip
As an application of our non-asymptotic concentration results, we will discuss two important topics:

\smallskip
-- The first one is of numerical interest and deals with non-asymptotic confidence intervals associated with the estimation of the ergodic mean. Such results can be very useful in practice when the computational resources are constrained (by time, by the model itself,\dots). If we assume that $\varphi \in{\mathcal C}^3(\R^d,\R)$, Lipschitz continuous with $(D^{i}\varphi)_{i\in \{2,3\}} $ bounded, \textcolor{black}{such that 
 the mapping $x\in \R^d\mapsto \langle b(x),\nabla \varphi(x)\rangle$  and $D^3\varphi $ are Lipschitz continuous}, we then establish that 
there are explicit sequences $c_n\le 1\le C_n $ converging to $1$ such that  for all $n\in \N $, for all $a>0 $ and $\gamma_k\asymp k^{-\theta},\ \theta\in (\frac 13,1] $,
\begin{equation}
\label{FIRST_DEV_STAT}
\P[\sqrt{\Gamma_n}\nu_n({\mathcal A}\varphi)\ge a]\le C_n \exp\left(-c_n\frac{a^2}{2\|\sigma\|_\infty^2\|\nabla \varphi\|_\infty^2}\right).
\end{equation}
When the diffusion coefficient $\sigma $ is such that $\|\sigma\|^2-\nu(\|\sigma\|^2) $ is itself  a coboundary \textcolor{black}{(or its counterpart for any other norm dominating $\|\cdot\|$)}, the previous bound improves in a certain deviation range for $a$. Namely, we are able to replace $\|\sigma\|_\infty^2 $ by $\nu(\|\sigma\|^2) $ in~\eqref{FIRST_DEV_STAT}, going thus closer to the theoretical limit variance involving the ``\textit{carr\'e du champ}". Moreover,  a mixed regime appears in the non-asymptotic deviation bounds  
which dramatically improves, \textcolor{black}{from the numerical viewpoint}, the general case for a certain deviation range. In particular, the corresponding variance is closer to the asymptotic one given by the ``\textit{carr\'e du champ}"  (see Theorem~\ref{THM_COBORD} below). 
 In accordance with the limit results of Theorem~\ref{theo}, the drifts associated with the fastest convergence rates can be handled as well. We obtain in full generality, results of type~\eqref{FIRST_DEV_STAT} under slightly weaker smoothness assumptions, considering  {\it e.g.}  $D^3\varphi $ being $\beta\in (0,1] $-H\"older continuous. 
Eventually, under suitable ellipticity conditions on $\sigma$, we are able to give non-asymptotic deviation bounds for a Lipschitz source $f$ as well as explicit gradient bounds for the solution $\varphi $ of the corresponding Poisson problem.


\smallskip
-- The second one is mainly theoretical and concerns non-asymptotic deviation bounds for the \textit{celebrated} almost-sure CLT first established by Brosamler and Schatte (see~\cite{bros:88} and~\cite{scha:88}) and revisited through the ergodic discretization schemes viewpoint in~\cite{lamb:page:02}.

\smallskip
Both applications require a careful investigation of the corresponding Poisson \textcolor{black}{ equation $\mathcal{A}\varphi=f-\nu(f)$. We will in particular prove that some pointwise regularity properties  can be transferred from $f$ to   $\varphi$.}

\smallskip
\textcolor{black}{The paper is organized as follows. We conclude this section by introducing some notations. Our  main  results are presented 
 in Section~\ref{MR}. 
We first state therein the specific concentration results for functions $f$ writing $f={\mathcal A\varphi}+\nu(f) $ (see Section~\ref{MCR}). We then proceed with some suitable controls on the Poisson problem associated with ${\mathcal A} $ and $f$ \textcolor{black}{in a confluent framework under the two main cases considered: namely a possibly degenerate setting, which requires a strong confluence condition and \textit{smooth} source and coefficients, and  a non degenerate setting, which allows to weaken the confluence condition as well as the smoothness assumptions on the source and the coefficients since in that case we manage to benefit from an elliptic bootstrap property} 
(see Section~\ref{SEC_PRACTICAL_MAIN_RESULTS}). We eventually give in Section~\ref{PRACT_DEV_BD} some practical and tractable deviation bounds and non-asymptotic confidence intervals, including a Slutsky like result, for a given specific source $f$ under the afore mentioned conditions on the coefficients of~\eqref{eq_diff}.
}

\textcolor{black}{We prove our main concentration result in Section~\ref{SEC_PROOF_CONC}. 
  Section~\ref{SEC_COBORD} is devoted to the case where  $\|\sigma\|^2-\nu(\|\sigma\|^2) $ is  a coboundary.
We then prove in Section~\ref{SEC_POISS} the required controls on the Poisson equation for our deviation result to hold as well as the practical controls of Section~\ref{PRACT_DEV_BD}.   
Section~\ref{TCLPS_SEC} is dedicated to   the non-asymptotic deviation bounds for the almost-sure CLT and Section~\ref{SEC_NUM} to  the numerical illustration of our non-asymptotic confidence intervals.
}

\subsection{Notations}\label{SEC_NOT}
In the following, we will denote by $C$ a constant that may change from line to line and depend, uniformly in time, on known parameters appearing in %
\A{C1}, \A{GC}, \A{C2}, \A{${\mathbf {{\mathcal L}_V} }$}, \A{S}. Other possible dependencies will be explicitly specified. We will also denote by ${\mathscr R}_n $ and $e_n$ deterministic remainder terms that respectively converge to 1 and 0 with $n$. The explicit dependencies of those sequences  again appear in the proofs.

For a function $f\in C^\beta(\R^d,\R),\ \beta\in (0,1]$, we denote 
$$[ f]_\beta:=\sup_{x\neq x'} 
 \frac{| f(x)- f(x')|}{|x-x'|^\beta}< +\infty$$
 its H\"older modulus of continuity. Observe carefully that, when $f $ is additionally bounded, we have that for all $0< \beta'<\beta $:
 \begin{equation}
 \label{HOLDER_PLUS_MOINS}
 [f]_{\beta'}\le [f]_{\beta}^{\frac{\beta}{\beta'}}(2\|f\|_\infty)^{1-\frac{\beta}{\beta'}}.
 \end{equation} 
Additionally, for $f\in {\mathcal C}^p(\R^d,\R),\ p\in \N$, we set for $\beta\in (0,1] $: 
$$[ f^{(p)}]_\beta:=\sup_{x\neq x', 
 |\alpha|=p}\frac{|D^\alpha f(x)-D^\alpha f(x')|}{|x-x'|^\beta}\le +\infty,$$
where $\alpha$ (viewed as an element of $\N_0^d\backslash \{0\}$ with $\N_0:=\N\cup \{0\}$) is a multi-index of length $p$, {\it i.e.} $|\alpha|:=\sum_{i=1}^d \alpha_i=p $. 

For notational convenience,  we also introduce for $k\in \N_0,\beta\in (0,1] $ and $m\in \{1,d,d\times r\} $ the H\"older space
\begin{eqnarray*}
{\mathcal C}^{k,\beta}(\R^d,\R^m)
:=\Big\{  f \in {\mathcal C}^{k}(\R^d,\R^m):  \forall \alpha, |\alpha|\in \leftB 1,k\rightB, \sup_{x\in \R^d} |D^\alpha f(x)|<+\infty  , [f^{(k)}]_\beta<+\infty\Big\} .
\end{eqnarray*}
We also denote by ${\mathcal C}_b^{k,\beta} $ the subset of ${\mathcal C}^{k,\beta}$ for which the \textcolor{black}{functions themselves ares} bounded. In particular, ${\mathcal C}^{0,1}(\R^d,\R^m) $ is the space of Lipshitz continuous functions from $\R^d $ to $\R^m $ \textcolor{black}{and $C_b^{0,\beta}(\R^d,\R^m) $ denotes the space of bounded $\beta $-H\"older continuous functions}. Observe as well that, if $f\in C^{k,\beta},\ k\ge 1 $ then $f$ is Lipschitz continuous.
%

We will as well use the notation $\leftB n,p\rightB $,  $(n,p)\in (\N_0)^2, n\le p $, for the set of integers being between $n$ and $p$.
%
Also, for a given Borel function $f:\R^d\rightarrow E$, where $ E$ can be $\R, \R^d,\R^d\otimes \R^q,\ q\in\{r,d\} $,  we set for $k\in \N_0 $:
\begin{eqnarray*}
f_k &:=& f(X_k).
\end{eqnarray*}
Eventually, for  $k\in \N_0$, we denote by $\F_k:=\sigma\big( (X_j)_{j\in \leftB 0,k\rightB}\big) $.

\mysection{Main results}
\label{MR}
\subsection{Result of non-asymptotic Gaussian concentration} \label{MCR}
Our main concentration result is given by the following theorem. In this theorem, we consider a slightly more general situation than for  the CLT recalled  in Theorem~\ref{theo}. We only assume $\varphi \in {\mathcal C}^{3,\beta}(\R^d,\R)  $, $\beta\in (0,1]$ instead of $\varphi\in {\mathcal C}^4(\R^d,\R) $ with existing bounded partial derivatives up to order four (which in particular implies that in Theorem~\ref{theo} $\varphi \in  {\mathcal C}^{3,1}(\R^d,\R)$). 

\begin{theo}\label{ineq_fin}
Assume 
\A{C1}, \A{GC}, \A{C2}, \A{${\mathbf {{\mathcal L}_V} }$},  \A{U}, \A{S} hold. Consider a Lipschitz continuous (possibly unbounded) function $\varphi \in \mathcal{C}^{3,\beta}(\R^d,\R)$ 
for some $\beta\in (0,1] $. Let us furthermore \textcolor{black}{suppose} that:
\begin{equation}  \exists C_{V,\varphi}>0, \ 
\forall x\in \R^d, |\varphi(x)|\le C_{V,\varphi}(1+\sqrt{V(x)}). 
\label{GV}\tag{{\bf G}${}_{\mathbf V}$}
\end{equation}
\textcolor{black}{Let $\theta\in [1/(2+\beta),1]$}  and assume the step sequence $(\gamma_k)_{k\ge 1} $ is of the form $\gamma_k\asymp k^{-\theta} $. 

\smallskip
%
\noindent $(a)$  \textbf{Unbiased Case (sub-optimal convergence rate):} \textcolor{black}{Let $ \theta \in (\frac{1}{2+\beta}, 1 ]$.}
\begin{itemize}
\item[(i)] \textcolor{black}{Assume that the mapping $x\mapsto \langle \nabla \varphi(x),b(x)\rangle $ 
is 
Lipschitz continuous}.\\ 
Then, there exist two explicit monotonic sequences  
$c_n\le 1\le C_n,\ n\ge 1$,   with $\lim_n C_n = \lim_n c_n =1$,  
such that for all $n \geq 1$ and for every $a >0$:  
 \begin{eqnarray*}
\P\big[ |\sqrt{\Gamma_n}\nu_n( \mathcal{A} \varphi )| \geq a \big] \leq 2 C_n \exp\left( - c_n \frac{a^2}{2\|\sigma\|_\infty^2\|\nabla \varphi\|_\infty^2} 
\right).
\end{eqnarray*}
\item[(ii)] 
\textcolor{black}{Suppose that the mapping $x\mapsto \langle \nabla \varphi(x),b(x) \rangle $ \textbf{is not} Lipschitz continuous}.  
The above result still holds for $ 0<a\le \chi_n\frac{\sqrt{\Gamma_n}}{\Gamma_n^{(2)}}$ for a positive sequence $\chi_n\underset{n}{\rightarrow} 0 $ arbitrarily slowly, so that $\chi_n\frac{\sqrt{\Gamma_n}}{\Gamma_n^{(2)}}\underset{n}{\rightarrow}+\infty $. In particular, for a fixed $a>0$, the above concentration inequality holds for $n$ large enough.

\end{itemize}

\smallskip
\noindent $(b)$ \textbf{Biased Case (Optimal Convergence Rate):} \textcolor{black}{Let $ \theta = \frac{1}{2+\beta}$}. 
We set for all $(k,t,u,x)\in \leftB 1, n\rightB\times [0,1]^2\times \R^d$:
\begin{eqnarray}
\label{DEF_LAMBDA_K}
\textcolor{black}{ \Lambda_{k-1}^\beta}(t,u,x):= \E\Big[\Tr\Big(  \big( D^3 \varphi( x + \gamma_k b(x) + u t\sqrt{\gamma_k} \sigma(x) U_k )  \sigma(x) U_k\big)\big(\sigma(x) U_k\otimes U_k \sigma(x)^*\big) \Big) \Big],
 \end{eqnarray}
\textcolor{black}{keeping in mind that, since $\varphi \in {\mathcal C}^{3,\beta}(\R^d,\R) $, $[D^3 \varphi]_\beta<+\infty$}.
We define subsequently: 
\begin{eqnarray}
E_n^\beta:= \frac{1}{\sqrt{\Gamma_n}}\sum_{k=1}^n \gamma_k^{3/2}    \int_0 ^1 \!\!dt \,( 1-t)t \int_0 ^1\!\! du \,\textcolor{black}{\Lambda_{k-1}^\beta}(t,u,X_{k-1})
 \label{DEF_QT}.
\end{eqnarray}
Set now 
\begin{equation}
\label{DEF_QT_THEO_BIAS}
\begin{split}
{\mathcal B}_{n,\beta}:=&E_n^\beta,\ {\rm if}\ \beta\in (0,1),\\
{\mathcal B}_{n,\beta}:=& E_n^\beta+\frac{1}{\sqrt{\Gamma_n}}\sum_{k=1}^n  \gamma_k^2 \int_0^1 (1-t)\Tr\Big(D^2 \varphi (X_{k-1} + t \gamma_k b_{k-1})  b_{k-1}\otimes b_{k-1}   \Big) dt\\
&\qquad +\frac 1{2\sqrt{\Gamma_n}}\sum_{k=1}^n \gamma_k   \Tr\Big( \big(D^2 \varphi ( X_{k-1} + \gamma_k b_{k-1} ) - D^2\varphi( X_{k-1})\big) \Sigma_{k-1} \Big),\ {\rm if}\ \beta=1.
\end{split}
\end{equation}
There exist two explicit monotonic sequences $c_n \le 1\le  C_n,\ n\ge 1$,  with $\lim_n  C_n=\lim_n  c_n= 1$ 
such that for all $n \geq 1$ and for every $a >0$:  
 \begin{eqnarray*}
\P\big[ |\sqrt{\Gamma_n}\nu_n( \mathcal{A} \varphi )
+{\mathcal B}_{n,\beta} | \geq a\big] \leq 2 C_n \exp\left( - c_n   \frac{a^2} {2 \|\sigma\|_\infty^2 \| \nabla  \varphi \|_\infty^2 }\right).
\end{eqnarray*}
For $\beta\in (0,1) $, the random variables $ |{\mathcal B}_{n,\beta}|=|E_{n}^\beta|\le \frac{ [\varphi^{(3)}]_{\beta}  \|\sigma\|_\infty^{(3+\beta)} \E\big[|U_1|^{3+\beta} \big] }{(1+\beta)(2+\beta)(3+\beta) }   \frac{\Gamma_n^{(\frac{3 +\beta}{2})}}{\sqrt{\Gamma_n}}\underset{n}{\longrightarrow} a_{\beta,\infty}>0 $ a.s. Also, for $\beta=1 $, the  $({\mathcal B}_{n,1})_{n\ge 1} $ are exponentially integrable and if,
  furthermore, $D^3\varphi  $ is ${\mathcal C}^1 $, ${\mathcal B}_{n,1}\underset{n}{\ra} -\widetilde \gamma m$ $a.s.$ where $\widetilde \gamma m$ is as in Theorem~\ref{theo}. 
In any case, a bias appears in our deviation controls when we consider, for a given smoothness of order $\beta\in (0,1] $ for $D^3\varphi $, the fastest associated time steps $\gamma_k\asymp k^{-\theta},\ \theta=\frac{1}{2+\beta} $.
\end{theo}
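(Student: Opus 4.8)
The plan is to perform a Taylor expansion of the map $x \mapsto \varphi(x + \gamma_{k+1} b(x) + \sqrt{\gamma_{k+1}} \sigma(x) U_{k+1})$ around $X_k$ to order three (or three plus the H\"older remainder when $\beta\in(0,1)$, order four when $\beta = 1$), take conditional expectations w.r.t.\ $\F_k$, and use the moment assumptions \A{GC} ($\E[U_1]=0$, $\E[U_1^iU_1^j]=\delta_{ij}$, $\E[U_1^{\otimes 3}]=0$) to identify the leading term as $\gamma_{k+1}\,\mathcal{A}\varphi(X_k)$ plus deterministic remainders. Concretely, writing $\Delta_{k+1} := \varphi(X_{k+1}) - \varphi(X_k)$ and doing the expansion, the first-order term produces $\gamma_{k+1} b_k\cdot \nabla\varphi_k + \sqrt{\gamma_{k+1}}\,\nabla\varphi_k\cdot\sigma_k U_{k+1}$, the second-order term produces $\frac12\Tr(\Sigma_k D^2\varphi_k)U_{k+1}^{\otimes 2}$-type terms whose conditional expectation is $\frac{\gamma_{k+1}}{2}\Tr(\Sigma_k D^2\varphi_k)$ plus the $O(\gamma_{k+1}^2)$ drift-squared term, and the third-order term has zero conditional expectation by $\E[U_1^{\otimes 3}]=0$ but contributes the $\gamma_{k+1}^{3/2}$ piece $\Lambda_k$ through the integral form of the Taylor remainder. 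Summing over $k$ from $0$ to $n-1$, dividing by $\sqrt{\Gamma_n}$, and rearranging, one obtains the telescoping identity
\begin{equation*}
\sqrt{\Gamma_n}\,\nu_n(\mathcal{A}\varphi) + \mathcal{B}_{n,\beta} = M_n + \mathscr{R}_n^{(0)},
\end{equation*}
where $M_n := \frac{1}{\sqrt{\Gamma_n}}\sum_{k=1}^n \sqrt{\gamma_k}\,\nabla\varphi_{k-1}\cdot\sigma_{k-1}U_k + (\text{higher-order martingale increments})$ is an $(\F_k)$-martingale and $\mathscr{R}_n^{(0)}$ collects the lower-order deterministic remainders (which I expect to absorb into $c_n, C_n$ via the technical lemmas on remainder control, e.g.\ Lemma~\ref{ineq_rest}).

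The second step is to obtain Gaussian concentration for the martingale part. Conditionally on $\F_{k-1}$, the dominant increment $\sqrt{\gamma_k}\,\nabla\varphi_{k-1}\cdot\sigma_{k-1}U_k$ is a linear (hence Lipschitz) functional of $U_k$ with Lipschitz constant $\sqrt{\gamma_k}\,|\sigma_{k-1}^*\nabla\varphi_{k-1}| \le \sqrt{\gamma_k}\,\|\sigma\|_\infty\|\nabla\varphi\|_\infty$, so \A{GC} gives, for any $\lambda$,
\begin{equation*}
\E\big[\exp(\lambda \sqrt{\gamma_k}\,\nabla\varphi_{k-1}\cdot\sigma_{k-1}U_k)\,\big|\,\F_{k-1}\big] \le \exp\Big(\frac{\lambda^2}{2}\gamma_k\|\sigma\|_\infty^2\|\nabla\varphi\|_\infty^2\Big).
\end{equation*}
Iterating this conditioning over $k=n,\dots,1$ (the tower property, handling the subdominant martingale terms either by incorporating them into slightly perturbed Lipschitz constants or by a Cauchy–Schwarz / Hölder splitting in the exponential moment, which is where the sequences $c_n$ degrade from $1$) yields $\E[\exp(\lambda M_n)] \le C_n' \exp\big(\frac{\lambda^2}{2c_n'}\|\sigma\|_\infty^2\|\nabla\varphi\|_\infty^2\big)$ after renormalizing by $1/\sqrt{\Gamma_n}$ and using $\frac{1}{\Gamma_n}\sum_{k=1}^n \gamma_k = 1$. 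A standard Chernoff/Markov optimization in $\lambda$ then converts this into the stated sub-Gaussian tail bound for $M_n$, and combining with the control of $\mathscr{R}_n^{(0)}$ gives the two-sided estimate for $\sqrt{\Gamma_n}\nu_n(\mathcal{A}\varphi)+\mathcal{B}_{n,\beta}$. In case $(a)$, since the time step is strictly faster than critical, $\mathcal{B}_{n,\beta}$ itself is part of $\mathscr{R}_n^{(0)}$ (it vanishes asymptotically), which is why it does not appear on the left; the condition $|D^2\varphi(x)|\le C_\varphi/(1+|x|)$ in $(a)(i)$ is exactly what makes the second-order deterministic remainder $\frac{1}{\sqrt{\Gamma_n}}\sum_k \gamma_k^2 \Tr(D^2\varphi_{k-1} b_{k-1}^{\otimes 2})$ controllable uniformly (using $|b(x)|^2 \le C_V V(x)$ and the sub-Gaussian moment bounds on $V(X_k)$), whereas without it one only controls it on the restricted range $a \le \chi_n \sqrt{\Gamma_n}/\Gamma_n^{(2)}$, giving $(a)(ii)$.

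The remaining work is the \emph{identification} of $\mathcal{B}_{n,\beta}$ and its asymptotics. For $\beta\in(0,1)$, one bounds the full third-order Taylor remainder directly: $|E_n^\beta| \le \frac{1}{\sqrt{\Gamma_n}}\sum_k \gamma_k^{3/2}\,[\varphi^{(3)}]_\beta\,\E[|\sigma_{k-1}U_k|^{3+\beta}]\cdot\frac{1}{(1+\beta)(2+\beta)(3+\beta)}$ after integrating the $(1-t)t$ and $u$ factors, giving the explicit a.s.\ bound in terms of $\Gamma_n^{((3+\beta)/2)}/\sqrt{\Gamma_n}$, whose convergence to a positive constant $a_{\beta,\infty}$ follows from $\gamma_k\asymp k^{-1/(2+\beta)}$ and a comparison of the series $\sum \gamma_k^{(3+\beta)/2} \asymp \sum k^{-(3+\beta)/(2(2+\beta))}$ with $\sqrt{\Gamma_n}$ (the exponent $(3+\beta)/(2(2+\beta))$ being exactly such that the ratio converges). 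For $\beta=1$ one must additionally keep the fourth-order Taylor term and the drift-correction terms explicitly, which is why $\mathcal{B}_{n,1}$ has the extra two sums; exponential integrability of $\mathcal{B}_{n,1}$ follows from the sub-Gaussian moment controls on $V(X_k)$ (via \A{C1}, the Lyapunov structure, and Lemma-type bounds on $\sup_k \E[\exp(\lambda' V(X_k)/\Gamma_n)]$ or similar), and if $D^3\varphi$ is $\mathcal{C}^1$ one recognizes by Taylor expansion and the ergodic theorem \eqref{CV_ERGO} (applied to the now-continuous integrand, after checking the $(G_V)$-type growth) that $\mathcal{B}_{n,1} \to -\widetilde\gamma\, m$ with $m$ as in Theorem~\ref{theo}.

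The main obstacle I anticipate is \textbf{not} the martingale concentration (that is the clean part) but rather the non-asymptotic control of the deterministic remainder $\mathscr{R}_n^{(0)}$ and of the martingale's subdominant increments — i.e.\ showing that \emph{all} remainder terms (which involve $\varphi$ evaluated at random arguments with possibly unbounded second derivatives, multiplied by moments of $|U_k|$ to various powers and powers of $\gamma_k$) can be bounded by deterministic sequences tending to $0$ at rates that are integrable against $\sqrt{\Gamma_n}$, \emph{uniformly in time}. This requires (i) uniform-in-$n$ sub-Gaussian moment estimates for $V(X_k)$ — equivalently, a non-asymptotic version of the stability of the scheme, which is presumably where \A{S} and \A{C1} are used (cf.\ the reference to Lemma~\ref{ineq_rest}) — and (ii) carefully tracking how these perturbations feed into the constants $c_n, C_n$ so that they still converge to $1$. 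The growth condition $(G_V)$ on $\varphi$ and the $|D^2\varphi(x)|\le C_\varphi/(1+|x|)$ condition are precisely calibrated to make these remainder estimates go through, and the bulk of the real proof will be the bookkeeping of these error terms via the technical lemmas of Section~\ref{SEC_TEC}.
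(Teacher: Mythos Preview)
Your approach is essentially the same as the paper's: Taylor decomposition, martingale plus remainder, iterated \A{GC} for the martingale, Lyapunov-based exponential integrability of $V(X_k)$ for the remainders, and Chernoff optimization. Two refinements are worth noting. First, you do not need to treat ``subdominant martingale increments'' separately or perturb the Lipschitz constant: the paper defines $\psi_k(X_{k-1},u)$ so that it equals $\varphi(X_{k-1}+\gamma_k b_{k-1}+\sqrt{\gamma_k}\sigma_{k-1}u)$ minus $u$-independent terms, hence $u\mapsto \psi_k(X_{k-1},u)$ is Lipschitz with \emph{exact} constant $\sqrt{\gamma_k}\|\sigma_{k-1}\|\|\nabla\varphi\|_\infty$, and the centered version $\Delta_k=\psi_k-\E[\psi_k\mid\F_{k-1}]$ inherits this. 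Second, the degradation of $c_n$ from $1$ does not come from the martingale increments at all; it comes from a H\"older split $\E[e^{-\lambda(M_n+R_n)/\Gamma_n}]\le \E[e^{-q\lambda M_n/\Gamma_n}]^{1/q}\E[e^{p\lambda|R_n|/\Gamma_n}]^{1/p}$ with $q=q_n\downarrow 1$ (so $p_n\to\infty$), where $p_n$ is chosen slowly enough that the remainder factor still tends to $1$. The boundary term $L_n=\varphi(X_n)-\varphi(X_0)$ from the telescoping sum is one of the remainders you must control via $(G_V)$ and the uniform bound $\sup_n\E[\exp(c_V V(X_n))]<\infty$.
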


\begin{remark}Observe that, when $\beta=1$, the above result provides \textcolor{black}{a}  non-asymptotic counterpart of the limit Theorem~\ref{theo}. In particular, the concentration constants appearing in Theorem~\ref{ineq_fin} asymptotically match those of the centered CLT recalled in Theorem~\ref{theo}, up to a substitution of the asymptotic variance $\int_{\R^d}|\sigma^* \nabla \varphi(x)|^2\nu (dx) $ by its natural upper bound $\|\sigma\|_\infty^2\|\nabla \varphi\|_\infty^2$. 

Importantly, these bounds do not require \textcolor{black}{``a priori"} non-degeneracy conditions and only depend on the diffusion coefficient through the sup-norm of the diffusion matrix $\Sigma $, assumption \A{C2}. It will anyhow be very natural to consider a non-degeneracy condition (\cite{pard:vere:01},~\cite{roth:stei:76},~\cite{vill:09}), or a confluence condition (\cite{page:panl:14}), when investigating the deviations for a given function $f$, in order to ensure the solvability of the corresponding Poisson equation ${\mathcal A}\varphi=f-\nu(f)$ and to derive explicit upper bounds for 
 $\|\nabla \varphi\|_\infty $ in terms of the coefficients $b,\sigma $ and the source $f$ which turn out to be crucial to design computable non-asymptotic confidence intervals. These aspects are discussed in Section~\ref{SEC_PRACTICAL_MAIN_RESULTS} below.

\textcolor{black}{The alternative form of the asymptotic variance (see~Remark~\ref{rem_rate})
$\int_{{\mathbb R}^d} |\sigma^*\nabla \varphi(x)|^2 \nu(dx)=-2\int_{{\mathbb R}^d}f(x) \varphi(x) \nu(dx)$
suggests that for bounded source terms $f$, an associated natural variance bound would be $2\|f\|_\infty \|\varphi\|_\infty$. \textcolor{black}{Such a control would a priori require less regularity on $\varphi$ than assumed in Theorem~\ref{ineq_fin}. One could for instance try to exploit suitable regularization procedures, like for instance the one proposed in Section~\ref{REG_LIP_SOURCE} for the proof of Theorem~\ref{THEO_CTR_LIP} below, to establish non-asymptotic deviation results under weaker assumptions}. Our main objective being to capture unbounded Lipschitz functions $f$, these aspects will concern further research.}
\end{remark}

 \begin{remark}[Smoothness and Convergence Rate]
Observe that, in coherence with \textcolor{black}{the} asymptotic setting of the CLT recalled \textcolor{black}{in} Theorem~\ref{theo}, for a given $\varphi \in {\mathcal C}^{3,\beta}(\R^d,\R),\ \beta\in(0,1] $, the fastest convergence rate for the deviations is attained for $\theta=\frac{1}{2+\beta} $. A bias appears, which can  be difficult to estimate in practice  since  $\varphi$ is usually unknown.
\end{remark}

\begin{remark}[On the smoothness property of $x\mapsto \langle b(x),\nabla \varphi(x) \rangle  $]
\label{REM_COND_B_NABLA_PHI}
The Lipschitz continuity assumption on the above mapping appearing in case (i) might seem awkward at first sight. \textcolor{black}{It is non-intrinsic in the sense that it involves both the drift $b$ of the model and the test function $\varphi $}. However, this condition naturally appears when $\varphi $ is a smooth solution to the Poisson equation ${\mathcal A}\varphi= f-\nu(f) $. Indeed, recalling the definition of ${\mathcal A} $ in \A{$\mathbf {\mathcal{L}_V}$}, iii), we can rewrite:
$$\langle \nabla \varphi(x), b(x)\rangle =f(x)-\nu(f)-\frac 12 \Tr\Big(\Sigma(x) D^2 \varphi(x) \Big).$$
Hence,  the Lipschitz continuity of the function in the above left hand side readily follows as soon as the source $f$ is Lipschitz and if  $D^2 \varphi$ is bounded and Lipschitz continuous \textcolor{black}{(since $\sigma$ is also bounded and Lipschitz)}.
Note that with the previous notations for function spaces the previous conditions are implied if $ f\in {\mathcal C}^{1,\beta}(\R^{d},\R) \subset {\mathcal C}^{0,1}(\R^{d},\R), \varphi \in {\mathcal C}^{3,\beta}(\R^{d},\R) \Rightarrow D^2 \varphi \in {\mathcal C}_b^{1,\beta}(\R^{d},\R^d \otimes \R^d ) \subset {\mathcal C}_b^{0,1}(\R^{d},\R^d \otimes \R^d )  $.
  We refer to Section~\ref{SEC_COMPLETE_SMOOTHNESS} for details. 
\end{remark}

\textcolor{black}{We now state an improvement of the previous concentration bound when $\|\sigma\|^2 -\nu(\|\sigma\|^2)$ is itself a coboundary, i.e. when the Poisson problem ${\mathcal A}\vartheta=\|\sigma\|^2 -\nu(\|\sigma\|^2) $ can be solved with $\vartheta $ satisfying the assumptions required for $\varphi $ in Theorem~\ref{ineq_fin}. Precisely, we have the following result.}

\begin{theo}\label{THM_COBORD_SHORT}
$(a)$ Under the assumptions of Theorem~\ref{ineq_fin} and with the notations introduced therein, provided that $\vartheta $ solution to the Poisson equation ${\mathcal A}\vartheta=\|\sigma\|^2 -\nu(\|\sigma\|^2)$  satisfies the \textcolor{black}{same smoothness and growth conditions as $\varphi $}, 
for 
$\beta\in (0,1] $ and $ \theta \in (\frac{1}{2+\beta}, 1 ]$ (unbiased case), there exist two explicit monotonic sequences 
$\tilde c_n\le 1\le \tilde C_n,\ n\ge 1$,   with $\lim_n \tilde C_n = \lim_n \tilde c_n =1$
such that for all $n\ge 1$ for all $0<a\le \chi_n \frac{\sqrt{\Gamma_n}}{\Gamma_n^{(2)}}$ for a positive sequence $\chi_n\underset{n}{\rightarrow} 0 $ arbitrarily slowly, so that $\chi_n\frac{\sqrt{\Gamma_n}}{\Gamma_n^{(2)}}\underset{n}{\rightarrow}+\infty $:
 \begin{equation}\label{eq:devineqimprov}
\P\big[ |\sqrt{\Gamma_n}\nu_n( \mathcal{A} \varphi )| \geq a \big] \leq  2\, \widetilde C_n \exp\left(\! - \widetilde c_n \frac{ a^2}{2\nu(\|\sigma\|^2)\|\nabla \varphi\|_\infty^2}\,
\right).
\end{equation}

\noindent $(b)$ If $\vartheta$ solve the Poisson equation ${\mathcal A}\vartheta=\vvvert\sigma\vvvert^2 -\nu(\vvvert\sigma\vvvert^2)$ mutatis mutandis for a matrix norm dominating the operator norm $(\vvvert \sigma(x) \vvvert\ge \| \sigma(x)\|$), then the above bound~\eqref{eq:devineqimprov} still holds with  $\nu(\vvvert\sigma\vvvert^2)$ instead of $\nu(\|\sigma\|^2)$. \end{theo}
 \textcolor{black}{
  Importantly, the above result allows to improve the \textit{natural variance bound} $\|\nabla \varphi\|_\infty^2\| \sigma \|_\infty^2 $ of Theorem~\ref{ineq_fin} by a more refined, namely $\|\nabla \varphi\|_\infty^2 \nu(\| \sigma \|)^2 $. Such a bound can be particularly interesting when the supremum norm of $\sigma $ is \textit{high} but its average w.r.t. the invariant distribution $\nu $ significantly lower. We refer to Section~\ref{SEC_COBORD}, Theorem~\ref{THM_COBORD} (general form of Theorem~\ref{THM_COBORD_SHORT}) and~\ref{SEC_NUM} (numerical results) for further discussions on that topic.  
}
 
 \textcolor{black}{ Of course Claim $(b)$ is less sharp than $(a)$ stated with  the  operator norm $\|\cdot\|$ but solving the  Poisson equation  for $\| \sigma(x)\|$ seems highly non trivial.  
By contrast, if $\vvvert\sigma(x)\vvvert = \|\sigma(x)\|_F:=\big[{\rm Tr}\big(\sigma\sigma^*(x))\big]^{1/2} $ stands for the Fr\"obenius norm, Theorem~\ref{Poisson_regular}  below yields the expected smoothness properties on $\|\sigma\|_F^2-\nu(\|\sigma\|_F^2) $ that  ensure the existence of  a solution to ${\mathcal A}\vartheta=\|\sigma\|_F^2-\nu(\|\sigma\|_F^2) $ meeting the required smoothness conditions. 
The price to pay with  such computable norms being that they usually induce some dependence on the dimension $d$ on the estimates (observe e.g. for the identity matrix $I_d$ of $\R^d \otimes \R^d $, $\|I_d\|_F=d^{1/2} $).
}

\subsection{Uniqueness of the invariant \textcolor{black}{distribution} and Regularity issues for the Poisson problem}
\label{SEC_PRACTICAL_MAIN_RESULTS}

For our deviation analysis to work, we need \textcolor{black}{to have the uniqueness of the invariant distribution $\nu$ and to establish some pointwise controls on the solution of the associated Poisson equation}. Namely, we need to have quantitative bounds on its derivatives and the associated H\"older continuity modulus up to order 3. 

To do so, additionally to our main assumptions introduced for Theorem~\ref{ineq_fin}, we will work in the confluent setting. 
\textcolor{black}{In dimension one, any ergodic diffusion is in \textit{some sense} confluent (see \cite{khas:80}, Appendix of the English translation, Theorem 2.2 p. 308 and its alternative proof in \cite{lema:page:panl:15} Theorem 2)}. Here, we will suppose that the following condition holds:
\begin{trivlist}
\item[$\bullet $] \textbf{Confluence Conditions} 
\item[\A{D${}_\alpha^p $}] We assume that there exists $\alpha >0 $ and $ p\in \textcolor{black}{(1,2]}
$ such that  for all $x\in \R^d $, $\xi\in \R^d $
$$\left \langle \frac {Db(x)+Db(x)^*} 2\xi,\xi \right\rangle +\frac 12 \sum_{j=1}^r \Big( (p-2)\frac{|\langle  D\sigma_{\cdot j}(x) \xi, \xi\rangle|^2}{|\xi|^2}+|D\sigma_{\cdot j} \xi|^2
\Big)\le -  \alpha |\xi|^2, $$
where $Db$ stands here  for the Jacobian of $b$,
$\sigma_{\cdot j} $ stands for the $j^{{\rm th}} $ column of the diffusion matrix $\sigma $ and $D\sigma_{\cdot j} $ for its  Jacobian matrix.
\end{trivlist}

{Within the confluent framework, we will consider from now on two kinds of assumptions \textcolor{black}{which first give the uniqueness of $\nu$} and that can lead to the required smoothness and to computable gradient bounds, which are crucial since they are precisely the quantities appearing in the non-asymptotic Gaussian deviation controls \textcolor{black}{as emphasized in the statement of Theorem~\ref{ineq_fin}}. 

\begin{trivlist}
\item[-] Strong Confluence condition and regularity of the coefficients, which means that the drift is sufficiently dominant in the dynamics \textcolor{black}{and} the coefficients are \textit{smooth} 
(see assumption \A{C${}_{{\rm\mathbf R}}$} 
below). \textcolor{black}{Note that these conditions may hold for degenerate diffusion coefficients}.
 
\item[-] Non-degeneracy of the diffusion coefficient and mild confluence condition and smoothness on the coefficients (see assumption  \A{C${}_{\rm \mathbf {UE}}$} below).
\end{trivlist}

Under a sufficiently strong confluence condition, i.e. when $\alpha$ is large enough in \A{D${}_\alpha^p$}, and provided that the coefficients $b,\sigma, f$ are sufficiently smooth, it is quite direct to derive, through stochastic flow techniques \textit{\`a la Kunita}, the required pointwise  bounds for the derivatives of the Feynman-Kac representation of the solution to the Poisson equation (see \cite{page:panl:14} and \textcolor{black}{Section~\ref{SEC_POISS_BIS}}). 

In the non-degenerate case, the main advantage is that we can alleviate some restrictions on $\alpha $ and the smoothness assumptions on $b,\sigma, f$ to benefit from an elliptic  regularity bootstrap deriving from suitable Schauder estimates available in the current setting from the work by Krylov and Priola \cite{kryl:prio:10}.
}

We now introduce a smoothness assumption on $b,\sigma, f$ that will be useful in both the considered cases.
\begin{trivlist}
\item[$\bullet $] \textbf{Smoothness of the coefficients and the source}.
For $k\in \{1,3\} $ and $\beta \in (0,1) $ define  
\item[\A{R${}_{k,\beta}$}]
The coefficients in equation~\eqref{eq_diff} are s.t. $b \in {\mathcal C}^{k,\beta}(\R^d,\R^d),\sigma \in {\mathcal C}_b^{k,\beta}(\R^d,\R^d)$. Also, the source $f$ for which we want to estimate $\nu(f)$ belong to ${\mathcal C}^{k,\beta}(\R^d,\R) $.
\end{trivlist}

With these assumptions at hand, we now introduce the first setting we consider. 
\begin{trivlist}
\item[$\diamond $] \textcolor{black}{\textbf{The confluent and regular assumption}
\A{C${}_{{\rm\mathbf R}}$},
holds if \A{D${}_\alpha^p $}, \A{R${}_{3,\beta} $}, for some $ \beta\in (0,1]$, are in force and \textcolor{black}{$\|D\sigma\|_\infty^2\le \frac{2\alpha}{2(3+\beta) -p} $ where $\|D\sigma\|_\infty:=\sup_{x\in \R^d} \Big(\sum_{j=1}^d \|D\sigma_{\cdot j}(x)\|^2 \Big)^{\frac 12}$ recalling that, for every  $j\in \leftB 1,d \rightB $, $\|D\sigma_{\cdot j}(x)\| $ stands for the \textcolor{black}{operator norm} of $D\sigma_{\cdot j}(x) $}.} 
\end{trivlist} 
In particular, we do not impose in this case any  additional structure condition on $\sigma$ which can degenerate.\\

In our second main framework, we will assume some uniform ellipticity conditions.
\begin{trivlist}
\item[$\bullet $] \textbf{Non-degeneracy Conditions}. 
\item[\A{UE}] \label{LABEL_UE}Uniform ellipticity. We assume that w.l.o.g. that $r=d $ ($r\ge d $ could also be considered)  
in~\eqref{eq_diff} and that  the diffusion coefficient $\sigma$  is such that
\begin{equation*}
\exists \,\underline{\sigma}>0,\ \forall \xi \in \R^d,\ \langle \sigma\sigma ^*(x)\xi,\xi\rangle \ge \underline{\sigma} |\xi|^2.
\end{equation*}
\end{trivlist}
We now introduce our second main setting:
\begin{trivlist}
\item[$\diamond $] \textcolor{black}{\textbf{The confluent and non-degenerate assumption}
\A{C${}_{{\rm\mathbf {UE}}}$},  holds if \A{D${}_\alpha^p$}, \A{R${}_{1,\beta} $}, for some $\beta \in (0,1] $,  are in force. If $d>1$, we also assume that $\|D\sigma\|_\infty^2\le \frac{2\alpha}{2(1+\beta) -p} $ and that the diffusion matrix $\Sigma $ is such that, for all  $(i,j)\in \leftB 1,d\rightB^2 $,    $\Sigma_{i,j}(x)=\Sigma_{i,j}(x_{i\wedge j},\cdots,x_d) $.}

\end{trivlist}

\begin{theo}\label{Poisson_regular}
Assume that 
\A{$\mathbf {{\mathcal L}_V} $} 
 and either \A{C${}_{{\rm\mathbf R}}$}  
or \A{C${}_{{\rm\mathbf {UE}}}$}
are in force. Then there exists a unique invariant distribution for the solution of ~\eqref{eq_diff}, i.e. assumption \A{U} holds.

\smallskip
The associated Poisson equation
\begin{equation}
\label{POISSON}
\forall x \in \R^d,\ {\mathcal A} \varphi(x) = f(x)-\nu(f),
\end{equation}
admits a unique solution $\varphi \in {\mathcal C}^{3,\beta}(\R^d,\R)$, $\beta\in (0,1) $ \textcolor{black}{centered w.r.t. $\nu $}. Furthermore, the following gradient bound holds
$$\|\nabla \varphi\|_\infty \le \frac{[f]_1}{\alpha},$$
\textcolor{black}{and the mapping $x\mapsto \langle \nabla \varphi (x),b(x)\rangle $ is Lipschitz continuous.}
\end{theo}

\subsubsection{About the regularity of the coefficients}

Under \A{C${}_{{\rm\mathbf R}}$}, the derivatives can be expressed using iterated tangent processes and we cannot hope, without \textit{a priori} any non-degeneracy condition, for a smoothing effect to hold. To have $\varphi \in {\mathcal C}^{3,\beta}(\R^d,\R) $, we need to consider a source $f\in {\mathcal C}^{3,\beta}(\R^d,\R) $ and the same smoothness on  $b,\sigma $ (Assumption \A{R${}_{3,\beta} $}). We refer to Section~\ref{SEC_POISS_BIS} for the proof of Theorem~\ref{Poisson_regular} under \A{C${}_{{\rm\mathbf R}}$}.

In the non-degenerate case, the solvability of the Poisson problem is usually studied in a Sobolev setting, see  {\it e.g.}  \cite{pard:vere:01}. \textcolor{black}{Let us also indicate that pointwise gradient bounds have been obtained by the same authors in ~\cite{pard:vere:03} for bounded drifts and diffusion coefficients which are additionally supposed to be \textit{smooth}, i.e. at least ${\mathcal C}_b^{2,\gamma} $ with the notations introduced in paragraph~\ref{SEC_NOT}. We point out that these estimates do not apply in our current setting in which the drift has typically linear growth.}

\textcolor{black}{We eventually mention the last paper by these authors, namely~\cite{pard:vere:05}.  They derive therein the uniqueness of the martingale solution to the Poisson equation in a potentially degenerate setting under suitable local Doeblin conditions. In that framework, pointwise controls are obtained as well for the solution itself but not for its \textit{derivatives}.
}

To obtain the required smoothness, we use here in the non-degenerate framework of 
\A{C${}_{{\rm\mathbf{UE}}}$}
some Schauder estimates, deriving from the work of Krylov and Priola \cite{kryl:prio:10}, which allow to benefit from the elliptic regularity. Namely, to obtain the mentioned smoothness on  $\varphi $ solving ${\mathcal A} \varphi=f-\nu(f) $, that we expect to be in ${\mathcal C}^{3,\beta}(\R^d,\R),\ \beta \in (0,1)$, we can take a source $f\in {\mathcal C}^{1,\beta}(\R^d,\R)$ and $b\in C^{1,\beta}(\R^d,\R^d), \sigma \in C_b^{1,\beta}(\R^d,\R^d) $. 

\textcolor{black}{We would eventually like to emphasize that the structure condition on $\Sigma$ might seem weird at first sight. It is actually needed to decouple the PDEs formally satisfied by $(\partial_{x_i}\varphi)_{i\in \leftB 1, d\rightB} $ in order to exploit the \textit{a priori} estimates of \cite{kryl:prio:10} established for scalar valued PDEs. We refer to Section~\ref{SEC_POISS_BIS} for a proof and details.}

\subsubsection{About the confluence condition and the restrictions on $\sigma $}

We work here in the confluent setting of \A{D${}_\alpha^p $}. This assumption will allow, through a pathwise analysis associated with the tangent flow, to derive a pointwise gradient bound. Another possibility to obtain such a bound is to assume  
a so-called Bakry and \'Emery curvature criterion, see \cite{bakr:emer:85, bakr:gent:ledo:14}. Under this condition, the gradient and semi-group commute up to an exponential multiplicative factor (see equation~\eqref{COMM_BE} below). 


\medskip
\noindent $\rhd$ \textit{Bakry and \'Emery curvature criterion.}  First, we recall that the ``\textit{carr\'e du champ}" operator $\Gamma$ of  a Markov process with generator ${\mathcal A} $ reads, for every $f,g $ in its domain ${\mathcal D}({\mathcal A})$
$$
\Gamma(f,g):=\frac 12 \Big( {\mathcal A}(fg)-f{\mathcal A}g-g{\mathcal A}f\Big)\quad\mbox{ and }\quad \Gamma(f):=\Gamma(f,f).
$$
We also need to define the $\Gamma_2 $ operator  
$$
\Gamma_2(f)=\frac 12 \Big({\mathcal A}\Gamma(f)-2\Gamma(f,{\mathcal A}f) \Big).
$$

 In our Brownian diffusion setting, we have 
 $$
 \forall x\in\R^d,\  \Gamma(f)(x)=|\sigma^*\nabla f(x)|^2.
 $$
whereas the computation of $\Gamma_2 $ is significantly more involved. However, if the diffusion matrix  \textcolor{black}{$\Sigma= \sigma\sigma^*$} is constant then:
$$
\Gamma_2(f)(x):=\Tr\big((D^2 f (x)\Sigma)^2\big)\textcolor{black}{-}\langle  \nabla f ,Db \Sigma \nabla f\rangle (x).
$$

With these notations at hand, we say that {\em the semi-group $(P_t)_{t\ge 0} $ of ${\mathcal A} $ satisfies the Bakry and \'Emery curvature criterion with parameter $\rho>0 $} if
 
\begin{equation*}
\mbox{\A{BE${}_\rho $} } \hskip 4cm   \forall\, f\!\in {\mathcal D}({\mathcal A}), \quad \Gamma_2(f)\ge \rho\, \Gamma(f). \hskip 5cm 
\end{equation*}

Observe that for $\Sigma=I_d$ the condition \A{BE${}_\rho$} is actually equivalent to \A{D${}_\alpha^p $} with  $\alpha=\rho $ (and any $p\in (1,2] $ since $D\sigma=0 $) and reads 
\[
\left \langle \frac {Db(x)+Db(x)^*} 2\xi,\xi \right\rangle  \le - \rho |\xi|^2.
\]

The computation of the $\Gamma_2$ for a general non-degenerate diffusion of the form~\eqref{eq_diff} is not easy and is discussed in \cite{arno:carl:ju:08}. In particular, in whole generality, the computation of the $\Gamma_2 $ requires the coefficients of the operator itself to be \textit{smooth} (i.e. at least ${\mathcal C}^2$). We also refer to \cite{arno:mark:tosc:unte:01} if the diffusion matrix is scalar diagonal, {\it i.e.} $\Sigma(x)=\varsigma(x)I_d $, $x\!\in \R^d$, where $\varsigma $ is real valued. In that case, it is then shown that \A{BE${}_\rho$} holds if and only if:
\begin{eqnarray}
\label{GAMMA_2_DIAG}
 - \frac 12 \langle (M(x)+M^*(x))\xi, \xi\rangle\le -\rho \varsigma(x) |\xi|^2,
\end{eqnarray} 
where 
\begin{eqnarray*}
 M(x)=\frac 12 \Big(\varsigma(x)\Delta \varsigma (x)+\langle b(x),\nabla \varsigma(x)\rangle -\|\nabla \varsigma(x)\|^2 \Big)I_d +\Big( \frac 12-\frac d4\Big)\nabla \varsigma \otimes \nabla \varsigma(x)-\varsigma(x)^2Db(x).
\end{eqnarray*}

An important property when \A{BE${}_\rho$} holds,  see again \cite{bakr:emer:85}, \cite{bakr:gent:ledo:14}, is that the following commutation inequality holds:
\begin{equation}\label{COMM_BE}
\forall t\ge 0,\ \forall x\in \R^d,\ \Gamma(P_t f)(x)\le  \exp(-2\rho t)P_t \Gamma(f).
\end{equation}

%
%

To conclude, let us say that the Bakry-Emery curvature condition is a very powerful tool to derive pointwise gradient bounds. In our framework, this is unfortunately not enough \textcolor{black}{as soon as $d>1$}, because additionally to this kind of bounds we also need, to enter in the framework of Schauder estimates under \A{C${}_{\rm \mathbf{UE}}$}, a control of the $\beta $-H\"older modulus of the gradient (see Section~\ref{SEC_COMPLETE_SMOOTHNESS}). It does not seem that the condition \A{BE${}_\rho $} helps to get such controls. The restrictions on the variations of $D\sigma $ appearing in both assumptions \A{C${}_{\rm \mathbf{UE}}$} and \A{C${}_{\rm \mathbf R}$} are precisely needed to derive in the first case the bounds on $[D\varphi]_\beta $ and in the second one to prove that the derivatives exist up to order 3 and that $[D^3 \varphi]_\beta $ is controlled as well. This explains why the conditions on $D\sigma $ are more stringent in the potentially degenerate setting \A{C${}_{\rm \mathbf R}$}. In each case, those bounds are obtained through pathwise analysis and the restrictions on $D\sigma $ ensure the time integrability of the iterated tangent flows, see again Section~\ref{SEC_COMPLETE_SMOOTHNESS} and \textcolor{black}{Appendix A in \cite{page:panl:14}} for details.

\subsection{Practical Deviation Bounds}\label{PRACT_DEV_BD}
\subsubsection{A first Non-asymptotic confidence interval result.}

\begin{theo}[Non-asymptotic confidence intervals without bias]\label{NA_CI_FIRST}
Let the assumptions of Theorem~\ref{Poisson_regular}  be in force. Then, there exists a unique invariant distribution $\nu $ for~\eqref{eq_diff}, i.e. \A{U} holds.
Also, $\varphi$ satisfies~\eqref{GV} introduced in Theorem~\ref{ineq_fin} \textcolor{black}{for $V(x)\asymp 1+|x|^2 $}. 

Assume that  \A{C1} (sub-gaussian tails of the innovation) holds and that the step sequence $(\gamma_k)_{k\ge 1} $ is such that $\gamma_k\asymp k^{-\theta},\ \theta\in (\frac{1}{2+\beta},1]$. Then, for $
(c_n)_{n\ge 1}, (C_n)_{n\ge 1}$ like in Theorem~\ref{ineq_fin} with  $ \lim_n c_n=\lim_n C_n=1$, we have that
for all $n\ge 1$ and $a>0$ and for \textcolor{black}{any matrix norm $\vvvert\cdot \vvvert $ dominating  $\| \cdot \|$}: 
\begin{equation}
\label{TRACT_BOUND_I}
\P\big[\sqrt{\Gamma_n}|\nu_n(f)-\nu(f)|>a\big]\le 2\,C_n \exp\Big(-c_n \frac{a^2 \alpha^2}{2 \vvvert \sigma\vvvert_\infty^2 [f]_1^2} \Big)\;\mbox{ with } \; \vvvert \sigma\vvvert_\infty:=\sup_{x\in \R^r}\vvvert \sigma(x)\vvvert,
\end{equation}
\begin{equation}
\label{NI_BORNE}
\P\left[\nu(f)\in \Big[\nu_n(f)-\frac{a \vvvert \sigma\vvvert_\infty[f]_1}{\alpha \sqrt{\Gamma_n}},\nu_n(f)+\frac{a \vvvert \sigma\vvvert_\infty[f]_1}{\alpha \sqrt{\Gamma_n}} \Big]\right]\ge 1-2C_n\exp\left(-c_n\frac{a^2}{2}
\right),
\end{equation}
where the parameter $\alpha $ is the same as in the pointwise gradient bound of Theorem~\ref{Poisson_regular}.
\end{theo}
\begin{proof}
Equation~\eqref{TRACT_BOUND_I} is a direct consequence of Theorem~\ref{ineq_fin} and the gradient bound in Theorem 
\ref{Poisson_regular}. \textcolor{black}{Indeed, the mean-value Theorem readily yields that~\eqref{GV} holds. It then suffice to
observe that $\nu_n(f)-\nu(f)=\nu_n({\mathcal A} \varphi) $}. To prove~\eqref{NI_BORNE},
setting $a_{\sigma,f,\alpha}:=a\vvvert \sigma \vvvert_\infty \frac{[f]_1}{\alpha} $, it suffices to write:
\begin{eqnarray*}
\P\left[\nu(f)\in \Big[\nu_n(f)-\frac{a_{\sigma,f,\alpha}}{\sqrt{\Gamma_n}},\nu_n(f)+\frac{a_{\sigma, f,\alpha}}{\sqrt{\Gamma_n}} \Big]\right]&=&1-\P[\sqrt{\Gamma_n}\big|\nu_n(f)-\nu(f)\big|\ge a_{\sigma,f,\alpha} ]
\end{eqnarray*}
and conclude by~\eqref{TRACT_BOUND_I}.
\end{proof}

\subsubsection{A more refined non-asymptotic confidence interval when $\vvvert \sigma\vvvert^2-\nu(\vvvert\sigma\vvvert^2) $ is a coboundary.}
\textcolor{black}{We provide in Theorem~\ref{Slutsky_theorem} below  a kind of~\textit{Slutsky's Lemma}
 when, for a matrix norm $\vvvert\cdot \vvvert $ dominating  $\| \sigma(x)\|\le \vvvert \sigma(x) \vvvert$,
s.t. $\vvvert\sigma\vvvert^2 -\nu(\vvvert\sigma\vvvert^2)$ is a coboundary}. 
 \begin{theo}[Slutsky type concentration result for the coboundary case]\label{Slutsky_theorem}
Under the assumptions of  Theorem~\ref{NA_CI_FIRST},
for $ \beta \in (0, 1 ]$
 and $ \theta \in (\frac{1}{2+\beta}, 1 ]$ (unbiased case), \textcolor{black}{assuming as well that there is a unique solution $\vartheta $ to ${\mathcal A}\vartheta=\vvvert\sigma\vvvert^2-\nu(\vvvert\sigma\vvvert^2) $ satisfying the same assumptions as $\varphi $ in Theorem~\ref{NA_CI_FIRST}}, there exist two explicit monotonic sequences 
$c_n\le 1\le C_n,\ n\ge 1$,   with $\lim_n C_n = \lim_n c_n =1$
such that for all $n\ge 1$, for all $a>0$, the following bounds hold:
 if  $\frac{a }{\sqrt{\Gamma_n}} \rightarrow 0$ (Gaussian deviations) then,
\begin{equation}
\label{DEV_SLUTSKY}
\P\big[ |\sqrt{\Gamma_n}\frac{\nu_n( f )-\nu(f)}{\sqrt{\nu_n(\vvvert \sigma\vvvert^2)}}| \geq a \big] 
\leq 2\,  C_n \exp\left(\! -  c_n\frac{ a^2 \alpha^2}{2 [f]_1^2}
\right),
\end{equation}
\begin{equation}
\label{NA_SLUTSKY}
\P\left[\nu(f)\in \Big[\nu_n(f)-\frac{a \sqrt{\nu_n(\vvvert \sigma\vvvert^2)} [f]_1}{\alpha \sqrt{\Gamma_n}},\nu_n(f)+\frac{a \sqrt{\nu_n(\vvvert \sigma\vvvert^2)} [f]_1}{\alpha \sqrt{\Gamma_n}} \Big]\right]\ge 1-2C_n\exp\left(-c_n\frac{a^2}{2}
\right).
\end{equation} 
\end{theo}
 Again, the non-asymptotic confidence interval is explicitly computable in function of the given source $f$, the coefficients in the dynamics and \textcolor{black}{the chosen (computable) matrix norm $\vvvert\cdot \vvvert $}. It is also sharper than the one in~\eqref{NI_BORNE}.
\subsubsection{Towards Lipschitz sources in the non-degenerate case}
We conclude this section stating a non-asymptotic deviation result for Lipschitz sources under some non-degeneracy conditions (assumption \A{C${}_{\rm \mathbf{UE}}$} of Theorem~\ref{Poisson_regular} replacing the condition stated there  for $f$ by a Lipschitz condition). 

\begin{theo}[Non-asymptotic concentration bounds for Lipschitz functions] \label{THEO_CTR_LIP}Let the assumptions of Theorem~\ref{Poisson_regular} with \A{C${}_{\rm \mathbf{UE}}$} hold except that $f$ is here solely a Lipschitz continuous function. For a time step sequence $(\gamma_k)_{k\ge 1} $ of the form $\gamma_k\asymp k^{-\theta} $, $\theta\in (1/2,1] $, we have that, there exist two explicit monotonic sequences  
$c_n\le 1\le C_n,\ n\ge 1$,   with $\lim_n C_n = \lim_n c_n =1$
such that for all $n \geq 1$ and for every $a >0$:  
 \begin{eqnarray}
\label{DEV_LIP}
\P\big[ |\sqrt{\Gamma_n}\big(\nu_n( f)-\nu(f)\big) | \geq a \big] \leq 2 C_n \exp\left( - c_n \frac{a^2\alpha^2}{2\|\sigma\|_\infty^2[f]_1^2} 
\right)
\end{eqnarray}
where $\alpha $ is as in Theorem~\ref{Poisson_regular}.
\end{theo}
Such estimates are important since they allow to get rather close to the natural framework  which appear in functional inequalities (that mainly deal with Wasserstein distances and their possible deviations). Indeed, through the Monge-Kantorovich formulation, the Wasserstein distance involves
Lipschitz functions, 
since it is precisely achieved taking the minimum over Lipschitz functions for all possible coupling with marginal corresponding to the arguments of the distance (see \cite{bakr:gent:ledo:14}). 

In the literature, some non-asymptotic bounds can be found for the deviations from its mean for the Wasserstein distance between the empirical measure of a homogeneous Markov chain and its stationary distribution (see Boissard \cite{bois:11}). Here, we manage to get directly the non-asymptotic deviation bounds over all possible Lipschitz functions for the empirical measure of the scheme aiming directly to approximate the target stationary distribution of the diffusion. Handling the Wasserstein distance in our framework would amount to consider the supremum over the Lipschitz functions in the probability in~\eqref{DEV_LIP}. This will concern  further research.

We eventually point out that Theorem~\ref{THEO_CTR_LIP} is obtained through regularization 
 arguments of the source $f$ exploiting the previous results of Theorems~\ref{ineq_fin} and~\ref{Poisson_regular} (see Section~\ref{REG_LIP_SOURCE} for details). This leads to a constraint on the steps, i.e. $\gamma_n \asymp n^{-\theta},\ \theta \in (\frac 12,1] $. This is the price to pay, indeed a bigger $\theta $ yields  a lower convergence rate, to handle less regular Lipschitz sources. Also, to perform the approximation procedure we precisely need a kind of elliptic bootstrap (like in Theorem~\ref{Poisson_regular} under \A{C${}_{\rm \mathbf{UE}}$}). This is why we impose the non-degeneracy assumptions. 
\section{Proof of the concentration results (Theorem~\ref{ineq_fin})} \label{SEC_PROOF_CONC}
For notational convenience, we say that assumption \A{A} holds whenever  \A{C1}, \A{GC}, \A{C2}, \A{$\mathbf{{\mathcal L}_V} $}, \A{U} and \A{S} are fulfilled. 
We assume throughout this section that \A{A} is in force and that the function $\varphi $ appearing in the lemmas satisfies the smoothness assumptions of Theorem~\ref{ineq_fin}.

\subsection{Strategy}
\label{SEC_STRAT}
To control the deviations of $\nu_n({\mathcal A}\varphi) $ we first give a decomposition lemma, obtained by a standard Taylor expansion. The idea is to perform a kind of \textit{splitting} between the deterministic contributions in the transitions and the random innovations. Doing so, we manage to prove that the contributions involving the innovations can be gathered into conditionally Lipschitz continuous functions of the noise, with small Lipschitz constant (functions $(\psi_{k}(X_{k-1},\cdot))_{k\in \leftB 1, n\rightB} $ below). These functions precisely give the Gaussian concentration, see Lemma~\ref{ineq_Psi}. The other terms, that we will call from now on ``remainders", will be shown to be uniformly controlled w.r.t. $n$ and do not give any asymptotic contribution in the ``fast decreasing" case $\theta>1/(2+\beta) $ (with the terminology of Theorem~\ref{ineq_fin}), see Lemmas~\ref{CTR_BIAS_1},~\ref{ineq_phi} and~\ref{ineq_rest}.

\begin{lemme}[Local Decomposition of the empirical measure
]
\label{decomp_nu}
For all $n \ge 1$ and $k\in \leftB 0,n-1\rightB$:
\begin{eqnarray}
\label{THE_DECOUP_LOC}
\varphi ( X_k) - \varphi ( X_{k-1}) &=&   \gamma_k\mathcal{A} \varphi(X_{k-1}) +  \left[\textcolor{black}{\gamma_k \int_0^1 \langle \nabla  \varphi (X_{k-1} + t \gamma_k b_{k-1})-\nabla \varphi(X_{k-1}),b_{k-1}\rangle dt}
\right.
\nonumber\\
&& \left.+ \frac 12 \gamma_k\, \Tr\Big( \big(D^2 \varphi ( X_{k-1} +  \gamma_k b_{k-1} ) - D^2\varphi( X_{k-1}) \big) \Sigma_{k-1}^2\Big) +  \psi_k(X_{k-1}, U_k)\right]\nonumber\\
&=:&\gamma_k {\mathcal A}(X_{k-1})+\Big(\psi_k(X_{k-1}, U_k) +R_{n,k}^1(X_{k-1})\Big),
\end{eqnarray}
where for all $k\in \leftB 1,n\rightB$, conditionally to $\F_{k-1} $, the mapping $u\mapsto \psi_k(X_{k-1}, u) $ is Lipschitz continuous in  $u$ with constant $\sqrt{\gamma_k}  \| \sigma_{k-1}\| \|\nabla \varphi \|_{\infty} $.
\end{lemme}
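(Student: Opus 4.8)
The plan is to establish \eqref{THE_DECOUP_LOC} by a second-order Taylor expansion of $\varphi$ along the scheme transition, carefully isolating the purely deterministic drift part from the innovation part. Writing the increment of the scheme as $X_k-X_{k-1}=\gamma_k b_{k-1}+\sqrt{\gamma_k}\,\sigma_{k-1}U_k$, I would first Taylor-expand $\varphi(X_k)-\varphi(X_{k-1})$ around $X_{k-1}$ to order two with integral remainder, which produces a gradient term $\nabla\varphi(X_{k-1})\cdot(X_k-X_{k-1})$ and a Hessian term $\int_0^1(1-t)\,\Tr\big(D^2\varphi(X_{k-1}+t(X_k-X_{k-1}))(X_k-X_{k-1})^{\otimes 2}\big)\,dt$. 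The idea is then to split each of these pieces according to the structure $\gamma_k b_{k-1}+\sqrt{\gamma_k}\sigma_{k-1}U_k$, collecting everything that carries at least one factor $U_k$ into the function $\psi_k(X_{k-1},U_k)$, and everything deterministic (given $\F_{k-1}$) into $R_{n,k}^1(X_{k-1})$.

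Concretely, I would define $\psi_k(x,u):=\varphi(x+\gamma_k b(x)+\sqrt{\gamma_k}\sigma(x)u)-\varphi(x+\gamma_k b(x))-\frac12\gamma_k\Tr\big(D^2\varphi(x)\,\Sigma(x)\big)$, so that the sum $\psi_k(X_{k-1},U_k)+R_{n,k}^1(X_{k-1})$ with $R_{n,k}^1(x):=\varphi(x+\gamma_k b(x))-\varphi(x)-\gamma_k b(x)\cdot\nabla\varphi(x)-\frac12\gamma_k\Tr(D^2\varphi(x)\Sigma(x))$ plus the $\gamma_k{\mathcal A}\varphi(X_{k-1})$ term telescopes back to $\varphi(X_k)-\varphi(X_{k-1})$; one then checks that $R_{n,k}^1$ matches the two deterministic remainder terms in \eqref{THE_DECOUP_LOC} by a further Taylor expansion of $\varphi(x+\gamma_k b(x))-\varphi(x)$ to second order (giving the $\gamma_k^2\int_0^1(1-t)\Tr(D^2\varphi(X_{k-1}+t\gamma_k b_{k-1})b_{k-1}^{\otimes 2})\,dt$ term) and by absorbing the discrepancy between $\frac12\gamma_k\Tr(D^2\varphi(x)\Sigma(x))$ evaluated at $x$ versus at $x+\gamma_k b(x)$ into the $\frac12\gamma_k\Tr\big((D^2\varphi(X_{k-1}+\gamma_k b_{k-1})-D^2\varphi(X_{k-1}))\Sigma_{k-1}\big)$ term. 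The choice of $\psi_k$ is engineered precisely so that $\E[\psi_k(x,U_k)\mid \F_{k-1}]$ is small (the subtracted $\frac12\gamma_k\Tr(D^2\varphi\,\Sigma)$ kills the leading second-order term using $\E[U_k^{\otimes 2}]=I_r$, and $\E[U_k]=0,\ \E[U_k^{\otimes 3}]=0$ from \A{GC} kill the first and third order pieces), though that centering estimate is really the content of the later lemmas rather than of this decomposition lemma.

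For the Lipschitz claim, I would fix $\omega$ (hence $X_{k-1}$, hence $x=X_{k-1}$ and $\sigma_{k-1}=\sigma(x)$) and note that the last two subtracted terms in $\psi_k(x,\cdot)$ are constant in $u$, so it suffices to bound the modulus of $u\mapsto\varphi(x+\gamma_k b(x)+\sqrt{\gamma_k}\sigma(x)u)$. Since $\varphi$ is globally Lipschitz with constant $\|\nabla\varphi\|_\infty$ (finite by assumption, as $\varphi\in\mathcal C^{3,\beta}$ is Lipschitz continuous), for $u,u'\in\R^r$ we get $|\varphi(x+\gamma_k b(x)+\sqrt{\gamma_k}\sigma(x)u)-\varphi(x+\gamma_k b(x)+\sqrt{\gamma_k}\sigma(x)u')|\le\|\nabla\varphi\|_\infty\sqrt{\gamma_k}\,|\sigma(x)(u-u')|\le\sqrt{\gamma_k}\,\|\sigma_{k-1}\|\,\|\nabla\varphi\|_\infty\,|u-u'|$, using that the operator norm of $\sigma(x)$ is dominated by its Fr\"obenius norm $\|\sigma_{k-1}\|$. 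This gives the stated conditional Lipschitz constant $\sqrt{\gamma_k}\,\|\sigma_{k-1}\|\,\|\nabla\varphi\|_\infty$.

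The only mildly delicate point — more bookkeeping than genuine obstacle — is keeping the two deterministic remainder terms in exactly the form stated in \eqref{THE_DECOUP_LOC}: one must be careful that the Hessian cross-term $\gamma_k^{3/2}\Tr\big(D^2\varphi(X_{k-1}+t(X_k-X_{k-1}))\,b_{k-1}\otimes(\sigma_{k-1}U_k)\big)$ and the pure-noise Hessian term $\gamma_k\Tr\big(D^2\varphi(X_{k-1}+t(X_k-X_{k-1}))(\sigma_{k-1}U_k)^{\otimes 2}\big)$ coming from the order-two expansion are assigned to $\psi_k$ (they carry factors of $U_k$), while only the pure-drift Hessian term is matched against the deterministic remainder; the identity then closes by construction since $\psi_k+R_{n,k}^1+\gamma_k{\mathcal A}\varphi(X_{k-1})$ is, term by term, a valid rearrangement of the full Taylor expansion. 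I do not expect any real difficulty here, since the lemma is a purely algebraic/analytic identity with no probabilistic content beyond writing the scheme increment.
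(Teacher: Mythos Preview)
Your approach matches the paper's: split the transition through the intermediate point $X_{k-1}+\gamma_k b_{k-1}$, Taylor-expand the deterministic and stochastic legs separately to order two, and collect all $U_k$-dependence into $\psi_k$. Your Lipschitz argument is also exactly the paper's --- only the term $\varphi(x+\gamma_k b(x)+\sqrt{\gamma_k}\sigma(x)u)$ depends on $u$, and its Lipschitz constant is $\sqrt{\gamma_k}\|\sigma_{k-1}\|\|\nabla\varphi\|_\infty$.

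There is, however, a bookkeeping slip in your explicit definitions. With your $\psi_k$ and $R_{n,k}^1$ as written, the sum $\gamma_k{\mathcal A}\varphi(X_{k-1})+\psi_k+R_{n,k}^1$ equals $\varphi(X_k)-\varphi(X_{k-1})-\tfrac12\gamma_k\Tr(D^2\varphi(X_{k-1})\Sigma_{k-1})$, not $\varphi(X_k)-\varphi(X_{k-1})$: the trace term is subtracted once in $\psi_k$, once in $R_{n,k}^1$, but added only once by $\gamma_k{\mathcal A}\varphi$. The fix (and what the paper does) is to subtract $\tfrac12\gamma_k\Tr\big(D^2\varphi(x+\gamma_k b(x))\Sigma(x)\big)$ in $\psi_k$ (Hessian at the shifted point), and to drop the trace term from $R_{n,k}^1$ altogether. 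Then $R_{n,k}^1$ reduces to the pure-drift Taylor remainder $\gamma_k^2\int_0^1(1-t)\Tr(D^2\varphi(X_{k-1}+t\gamma_k b_{k-1})b_{k-1}^{\otimes 2})\,dt$ plus the shift correction $\tfrac12\gamma_k\Tr\big((D^2\varphi(X_{k-1}+\gamma_k b_{k-1})-D^2\varphi(X_{k-1}))\Sigma_{k-1}\big)$, which is precisely the form in \eqref{THE_DECOUP_LOC}. This correction does not affect the Lipschitz claim, since the subtracted term is constant in $u$ either way.
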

Introducing for a given $k$, the mapping $u\mapsto \Delta_k(X_{k-1},u):=\psi_k(X_{k-1},u)-\E\,[\psi_k(X_{k-1},U_k)|\F_{k-1}]$, we then rewrite:
$$  \varphi ( X_k) - \varphi ( X_{k-1})=\gamma_k\mathcal{A} \varphi(X_{k-1}) +\Delta_k(X_{k-1},U_k)+R_{n,k}(X_{k-1}),$$
with $R_{n,k}(X_{k-1}):=R_{n,k}^1(X_{k-1})+\E\,[\psi_k(X_{k-1},U_k)|\F_{k-1}]$.
The contribution $\Delta_k(X_{k-1},U_k) $ can be viewed as a martingale increment. Introduce now the associated (true) martingale 
\begin{equation}\label{INTRO_MART}
M_n:=\sum_{k=1}^n \Delta_k(X_{k-1},U_k).
\end{equation}
Summing over $k$ yields: 
\begin{equation}
\label{LOC_BIS_DPHI}
\varphi(X_n)-\varphi(X_0)=\Gamma_n \nu_n(\mathcal A \varphi)+M_n+\sum_{k=1}^n R_{n,k}(X_{k-1}).
\end{equation}

Defining $R_n:=\sum_{k=1}^n R_{n,k}(X_{k-1})+\varphi(X_0)-\varphi(X_n) $ we obtain the following decomposition of the empirical measure:
\begin{equation}
\label{REWRITE_NU_APHI_NEW}
\nu_n ({\mathcal A}\varphi)=-\frac{1}{\Gamma_n}( M_n+R_n).
\end{equation}

\begin{trivlist}
\item[-] \textit{Unbiased Case (Sub-Optimal Convergence Rate).} This case corresponds to \textit{fast} decreasing steps of the form $\gamma_k\asymp k^{-\theta} $, $\theta>1/(2+\beta) $.
To investigate the non-asymptotic deviations of the empirical measure, the idea is now to write for $a, \lambda>0 $:
\begin{eqnarray}
\P\big[ \sqrt{\Gamma_n}\nu_n({\mathcal A} \varphi)  \geq a\big]&\le&\exp\Big(-\frac{a \lambda}{\sqrt{\Gamma_n}}\Big)  \E\left[\exp\Big(-\frac{\lambda }{\Gamma_n}(M_n+R_n)\Big)\right]\notag\\
&\le & \exp\Big(-\frac{a \lambda}{\sqrt{\Gamma_n}}\Big)\E\left[\exp\Big(-\frac{q \lambda }{\Gamma_n}M_n\Big)\right]^{1/q}\E\left[\exp\Big(\frac{p \lambda }{\Gamma_n}|R_n|\Big)\right]^{1/p}, \frac 1p+\frac 1q=1,\ p,q>1.\label{EXPLI_DEV}
\end{eqnarray}
We actually aim to choose $q:=q(n)\underset{n}{\rightarrow} 1$. For  a suitable choice of $q$ satisfying the previous condition, we manage, in the fast decreasing case, to show that ${\mathscr R }_n:= \E\,[\exp(\frac{p \lambda }{\Gamma_n}|R_n|)]^{1/p}\underset{n}{\rightarrow} 1$. For the term involving the martingale $M_n$ we actually use the Gaussian concentration property \A{GC} of the innovation on its increments $(\Delta_k(X_{k-1},U_k))_{k\in \leftB 1, n\rightB} $. Namely, using the control of the Lipschitz constant of $\Delta_k(X_{k-1},\cdot) $ stated in Lemma~\ref{decomp_nu}, we derive: 
\begin{eqnarray}
\E\left[\exp\Big(-\frac{q \lambda }{\Gamma_n}M_n\Big)\right]&=& \E\left[\exp\Big(-\frac{q \lambda }{\Gamma_n}M_{n-1}\Big)\E\left[\exp\Big(-\frac{q \lambda }{\Gamma_n}\Delta_{n-1}(X_{n-1},U_n)\Big)\,\big|\,\F_{n-1}\right]\right]\notag\\
&\le& \E\left[\exp\Big(-\frac{q \lambda }{\Gamma_n}M_{n-1}\Big)\right] \exp\left(\frac{\lambda^2 q^2}{2\Gamma_n^2}\gamma_n \|\sigma\|_\infty^2\|\nabla \varphi\|_\infty^2\right)\le \exp\left(\frac{\lambda^2 q^2}{2\Gamma_n} \|\sigma\|_\infty^2\|\nabla \varphi\|_\infty^2\right),\label{BLUNT_CTR}
\end{eqnarray}
iterating the procedure to derive the last identity. From~\eqref{EXPLI_DEV}, we thus get:
$$\P\big[ \sqrt{\Gamma_n}\nu_n({\mathcal A} \varphi)  \geq a\big]\le{\mathscr R }_n\exp\Big(-\frac{a \lambda}{\sqrt{\Gamma_n}}+\frac{\lambda^2 q}{2\Gamma_n} \|\sigma\|_\infty^2\|\nabla \varphi\|_\infty^2\Big).$$
Keeping in mind that we manage to find $q:=q(n)\downarrow_n 1 $ such that the remainder ${\mathscr R }_n \downarrow_n 1$, the result of Theorem~\ref{ineq_fin} in the considered case then follows from a quadratic optimization over the parameter $\lambda $. 
\item[-] \textit{Biased Case (Optimal Convergence Rate).} This case corresponds to \textit{slow} decreasing steps of the form $\gamma_k\asymp k^{-\theta} $, $\theta=1/(2+\beta) $. In this setting, some terms of the remainder $R_n $ in~\eqref{REWRITE_NU_APHI_NEW} give a non trivial asymptotic contribution. We choose to substract them before studying the deviation (term ${\mathcal B}_{n,\beta} $ in~\eqref{DEF_QT_THEO_BIAS}).
\end{trivlist}

\subsection{Explicit controls on the remainders}

Summing the increments appearing in~\eqref{THE_DECOUP_LOC}, we now choose for the analysis to write for a given $n\in \N $ the remainder $R_n$ defined after~\eqref{LOC_BIS_DPHI} as 
$$R_n=\sum_{k=1}^n R_{n,k}(X_{k-1})+\varphi(X_0)-\varphi(X_n)=(D_{2,b,n}+D_{2,\Sigma,n})+\bar G_n-L_n,$$ 
where:
\begin{eqnarray}
\label{DECOUP}
D_{2,b,n}&:=&\sum_{k=1}^n  \textcolor{black}{\gamma_k \int_0^1 \langle \nabla  \varphi (X_{k-1} + t \gamma_k b_{k-1})-\nabla \varphi(X_{k-1}),b_{k-1}\rangle dt}
\nonumber\\
D_{2,\Sigma,n}&:=&\frac 12 \sum_{k=1}^n \gamma_k \Tr\Big(  \big(D^2 \varphi ( X_{k-1} +  \gamma_k b_{k-1} ) - D^2\varphi( X_{k-1}) \big) \Sigma_{k-1}^2 \Big),\nonumber\\
\bar G_{n}&:=&\sum_{k=1}^n \E\,[\psi_k(X_{k-1}, U_k)|\F_{k-1}],\notag\\
L_n&:=&\varphi ( X_n) - \varphi ( X_0).
\end{eqnarray}
We refer to the proof of Lemma~\ref{decomp_nu} to check that the above definition of $\bar G_n$ actually matches the term $\sqrt{\Gamma_n}E_n^\beta $ introduced in equation~\eqref{DEF_QT} of Theorem~\ref{ineq_fin}.
We rewrite from~\eqref{REWRITE_NU_APHI_NEW}
\begin{equation}
\label{REWRITE_NU_APHI}
\nu_n ({\mathcal A}\varphi)=-\frac{1}{\Gamma_n}( M_n+R_n)=-\frac{1}{\Gamma_n}\big( M_n+(D_{2,b,n}+D_{2,\Sigma,n})+\bar G_n-L_n\big).
\end{equation}
%

We now split the analysis according to the cases \textit{(a)} and \textit{(b)} introduced in Theorem~\ref{ineq_fin}.
\begin{trivlist}
\item[(\textit{a})] 
$\theta\in (1/(2+\beta),1] $, $\beta\in (0,1] $. From~\eqref{REWRITE_NU_APHI}, the exponential Tchebychev and H\"older inequalities yield that,  for all $\lambda \in \R_ +$ and all $p,q \in (1 , + \infty ) $, $ \frac{1}{p} + \frac{1}{q} =1$,
\begin{eqnarray}
\P\big[  \sqrt{\Gamma_n}\nu_n({\mathcal A} \varphi)  \geq a\big]
\leq  \exp\Big( - \frac{a\lambda}{\sqrt{\Gamma_n}} \Big)
 \left( \E \exp\Big( -\frac{q \lambda}{ \Gamma_n}M_n\Big)\right)^{\frac1q}\nonumber\\
\times \left(\E \exp\Big( \frac{2p \lambda}{ \Gamma_n}\big(\big| L_n\big| +\big| \bar G_n\big|\big)\Big)\right)^{\frac 1{2p}}\left(\E\exp\Big( \frac{4p \lambda}{ \Gamma_n}\big| D_{2,b,n}\big| \Big)\right)^{\frac 1{4p}}\left(\E\exp\Big( \frac{4p \lambda}{ \Gamma_n}\big| D_{2,\Sigma,n}\big| \Big)\right)^{\frac1{4p}}. \label{DECOUP_TCHEB}
\end{eqnarray}
\item[(\textit{b})] 
$\theta=\frac 1{2+\beta},\ \beta\in (0,1]$. If $\beta=1 $, denoting, $D_{2,n}:=D_{2,b,n}+D_{2,\Sigma,n}$, we have from~\eqref{DECOUP} and with the notations of~\eqref{DEF_QT_THEO_BIAS}, $ (\bar G_n+D_{2,n})=\sqrt{\Gamma_n}{\mathcal B}_{n,1}$. We study the deviations of:
\begin{eqnarray}
\P\big[  \sqrt{\Gamma_n}\nu_n({\mathcal A} \varphi)+{\mathcal B}_{n,\beta}  \geq a\big]&=&\P\Big[  \nu_n({\mathcal A} \varphi)+\frac{\bar G_n+D_{2,n}}{\Gamma_n}  \geq \frac a {\sqrt{\Gamma_n}} \Big]\nonumber\\
&\leq& \exp\Big( - \frac{a \lambda}{\sqrt{\Gamma_n}}\Big ) \left( \E \exp\Big( -\frac{q \lambda}{ \Gamma_n}M_n\Big)\right)^{\frac1q}
\left(\E \exp\Big( \frac{p \lambda}{ \Gamma_n}\big| L_n\big| \Big)\right)^{\frac 1{p}}.
\nonumber\\
 \label{DECOUP_TCHEB_BIAIS}
\end{eqnarray}
For $\beta\in (0,1) $, the contributions of $D_{2,n}$ do not yield any asymptotic bias. Recalling from~\eqref{DEF_QT_THEO_BIAS} that ${\mathcal B}_{n,\beta}=E_n^\beta=\frac{\bar G_n}{\sqrt {\Gamma_n}} $, we write:
\begin{eqnarray}
\P\big[  \sqrt{\Gamma_n}\nu_n({\mathcal A}\varphi)+{\mathcal B}_{n,\beta}   \geq a\big]&=&\P\Big[ \nu_n({\mathcal A} \varphi)+\frac{\bar G_n}{\Gamma_n}  \geq \frac a { \sqrt{\Gamma_n}}\Big]
\leq  \exp\Big( - \frac{a\lambda}{\sqrt{\Gamma_n}} \Big) \left( \E \exp\Big(- \frac{q \lambda}{ \Gamma_n}M_n\Big)\right)^{\frac1q}\nonumber\\
&&\times \left(\E \exp\Big( \frac{2p \lambda}{ \Gamma_n}\big| L_n\big| \Big)\right)^{\frac 1{2p}}\left(\E\exp\Big( \frac{4p \lambda}{ \Gamma_n}\big| D_{2,b,n}\big| \Big)\right)^{\frac 1{4p}}\left(\E\exp\Big( \frac{4p \lambda}{ \Gamma_n}\big| D_{2,\Sigma,n}\big| \Big)\right)^{\frac1{4p}}. \label{DECOUP_TCHEB_BIAS_BETA_NEQ_1TIERS}
\end{eqnarray}

\end{trivlist}
\begin{remark}
Observe that in case \textit{(a)}, the ``small steps" and the corresponding sufficient smoothness of $\varphi $ prevent from the appearance of a bias. As a result, the concentration bound is, at the non-asymptotic level, the same as in Theorem~\ref{theo}, up to the  additional upper-bound for the variance. 
In case \textit{(b)}, we subtract the terms $\textcolor{black}{{\mathcal B}_{n,\beta}} $ that asymptotically give a bias. When $\beta=1 $, this is the case for both terms $\frac{\bar G_n}{\Gamma_n}, \frac{D_{2,n}}{\Gamma_n}$. Also,  for $D^3\varphi\in {\mathcal C}^1$, $\textcolor{black}{{\mathcal B}_{n,1}}=\frac{\bar G_n+D_{2,n}}{\sqrt{\Gamma_n}}\underset{n}{\ra}-\widetilde \gamma m $ introduced in Theorem~\ref{theo}. For $\beta\in (0,1) $ and $\varphi \in {\mathcal C}^{3}(\R^d,\R),\ [\varphi^{(3)}]_\beta<+\infty $, the only term giving a bias is $\textcolor{black}{{\mathcal B}_{n,\beta}}=E_n^\beta=\frac{\bar G_n}{\sqrt{\Gamma_n}} $.

\end{remark}

The lemma below provides the Gaussian contribution 
\textcolor{black}{to be exploited in inequalities~\eqref{DECOUP_TCHEB} -~\eqref{DECOUP_TCHEB_BIAS_BETA_NEQ_1TIERS}}.
\begin{lemme}[Gaussian concentration] 
\label{ineq_Psi}
For $a> 0$, $q\in (1,+\infty)$, setting
\begin{equation}
\label{DEF_LAMBDA}
\lambda_n:=\frac{ a}{q\|\sigma\|_\infty^2 \|\nabla \varphi\|_\infty^2}\sqrt {\Gamma_n}, 
\end{equation}
we derive:
\begin{eqnarray*}
 \exp\left(-\lambda_n \frac{a}{\sqrt{\Gamma_n}}\right)\left(\E  \exp\Big(- \frac{q \lambda_n}{ \Gamma_n} M_n\Big) \right)^{\frac 1q}
\leq \exp\left(-\frac{a^2}{2q \|\sigma\|_\infty^2 \|\nabla \varphi\|_\infty^2}\right).
\end{eqnarray*}
\end{lemme}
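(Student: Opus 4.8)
The plan is to exploit the martingale structure of $M_n=\sum_{k=1}^n\Delta_k(X_{k-1},U_k)$ together with the Gaussian concentration property \A{GC} of the innovations, exactly as sketched in the ``Strategy'' subsection around \eqref{BLUNT_CTR}. First I would condition successively on the filtration: writing $\E\exp(-\frac{q\lambda_n}{\Gamma_n}M_n)=\E\big[\exp(-\frac{q\lambda_n}{\Gamma_n}M_{n-1})\,\E[\exp(-\frac{q\lambda_n}{\Gamma_n}\Delta_n(X_{n-1},U_n))\mid\F_{n-1}]\big]$, I use that, conditionally on $\F_{n-1}$, the map $u\mapsto\Delta_n(X_{n-1},u)=\psi_n(X_{n-1},u)-\E[\psi_n(X_{n-1},U_n)\mid\F_{n-1}]$ is Lipschitz with constant $\sqrt{\gamma_n}\,\|\sigma_{n-1}\|\,\|\nabla\varphi\|_\infty\le\sqrt{\gamma_n}\,\|\sigma\|_\infty\|\nabla\varphi\|_\infty$ (Lemma \ref{decomp_nu} and \A{C2}), and that it is $\F_{n-1}$-conditionally centered. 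Applying \A{GC} to the $1$-Lipschitz function $g(u)=\Delta_n(X_{n-1},u)/(\sqrt{\gamma_n}\|\sigma\|_\infty\|\nabla\varphi\|_\infty)$ with parameter $\frac{q\lambda_n}{\Gamma_n}\sqrt{\gamma_n}\|\sigma\|_\infty\|\nabla\varphi\|_\infty$ gives
\[
\E\Big[\exp\Big(-\tfrac{q\lambda_n}{\Gamma_n}\Delta_n(X_{n-1},U_n)\Big)\,\Big|\,\F_{n-1}\Big]\le\exp\Big(\tfrac{q^2\lambda_n^2}{2\Gamma_n^2}\,\gamma_n\,\|\sigma\|_\infty^2\|\nabla\varphi\|_\infty^2\Big),
\]
the first-order term vanishing by conditional centering.

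Next I would iterate this bound down from $k=n$ to $k=1$, collecting the factors $\exp\big(\frac{q^2\lambda_n^2}{2\Gamma_n^2}\gamma_k\|\sigma\|_\infty^2\|\nabla\varphi\|_\infty^2\big)$; since $\sum_{k=1}^n\gamma_k=\Gamma_n$ this telescopes to
\[
\E\exp\Big(-\tfrac{q\lambda_n}{\Gamma_n}M_n\Big)\le\exp\Big(\tfrac{q^2\lambda_n^2}{2\Gamma_n}\,\|\sigma\|_\infty^2\|\nabla\varphi\|_\infty^2\Big),
\]
whence $\big(\E\exp(-\tfrac{q\lambda_n}{\Gamma_n}M_n)\big)^{1/q}\le\exp\big(\tfrac{q\lambda_n^2}{2\Gamma_n}\|\sigma\|_\infty^2\|\nabla\varphi\|_\infty^2\big)$. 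Multiplying by $\exp(-\lambda_n a/\sqrt{\Gamma_n})$ yields the upper bound $\exp\big(-\tfrac{\lambda_n a}{\sqrt{\Gamma_n}}+\tfrac{q\lambda_n^2}{2\Gamma_n}\|\sigma\|_\infty^2\|\nabla\varphi\|_\infty^2\big)$. Finally I would substitute the prescribed value $\lambda_n=\tfrac{a}{q\|\sigma\|_\infty^2\|\nabla\varphi\|_\infty^2}\sqrt{\Gamma_n}$ from \eqref{DEF_LAMBDA} — which is precisely the minimizer of this quadratic in $\lambda_n$ — and check that the exponent collapses to $-\tfrac{a^2}{q\|\sigma\|_\infty^2\|\nabla\varphi\|_\infty^2}+\tfrac{a^2}{2q\|\sigma\|_\infty^2\|\nabla\varphi\|_\infty^2}=-\tfrac{a^2}{2q\|\sigma\|_\infty^2\|\nabla\varphi\|_\infty^2}$, which is the claimed inequality.

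There is no serious obstacle here: the result is essentially a one-line consequence of \A{GC} plus the Lipschitz control of Lemma \ref{decomp_nu}, and the choice of $\lambda_n$ is exactly the quadratic optimizer. The only points requiring a little care are (i) making sure the conditional expectation of the increment is genuinely zero so that the linear term in \A{GC} disappears — this is built into the definition of $\Delta_k$ — and (ii) justifying the iteration, i.e. that at each step the partial sum $M_{k-1}$ is $\F_{k-1}$-measurable so the conditioning argument applies and the bound is measurable enough to pull out of the expectation; both are routine given the adapted structure $\F_k=\sigma((X_j)_{j\le k})$ and the fact that $U_k$ is independent of $\F_{k-1}$.
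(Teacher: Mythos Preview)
Your proposal is correct and follows essentially the same route as the paper's own proof: iterated conditioning, application of \A{GC} to the conditionally centered Lipschitz increments $\Delta_k(X_{k-1},\cdot)$ with the constant from Lemma~\ref{decomp_nu}, summation of the $\gamma_k$'s to $\Gamma_n$, and then the quadratic optimization in $\lambda$ yielding the value~\eqref{DEF_LAMBDA}. There is nothing to add.
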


\begin{lemme}[Bounds for the \textcolor{black}{Conditional Expectations}] 
 \label{CTR_BIAS_1}
With the above notations, we have that for $\beta\in (0,1], \theta\in [\frac{1}{2+\beta},1] $:
$$
|\textcolor{black}{E_n^\beta}|=\frac{|\bar G_n|}{\sqrt{\Gamma_n}}\le a_n:= \frac{ [\varphi^{(3)}]_{\beta}  \big\|\sigma\big\|_\infty^{(3+\beta)} \E\big[|U_1|^{3+\beta} \big] }{(1+\beta)(2+\beta)(3+\beta) }   \frac{\Gamma_n^{(\frac{3 +\beta}{2})}}{\sqrt{\Gamma_n}}, \,a.s.
$$
Moreover, $a_n\underset{n}{\ra} a_\infty$, with $a_\infty=0 $ if $\theta\in(\frac{1}{2+\beta},1] $ and $a_\infty>0 $ if $ \theta=\frac{1}{2+\beta}$. Also, for $\beta\in (0,1],\ \theta\in(\frac{1}{2+\beta},1]  $:
\begin{equation}
\label{CTR_BIAS_HOLDER}
\Bigg(\E\exp\Big(\frac{2p\lambda_n}{\Gamma_n}|\bar G_n|\Big)\Bigg)^{\frac1{2p}}\le \exp\Big(\frac{ \lambda_n }{\sqrt{\Gamma_n}}a_n \Big)\le \exp\Big(\frac{ \lambda_n^2}{2\Gamma_np}+\frac{a_n^2p}{2} \Big),\ \forall p>1.
\end{equation}
\end{lemme}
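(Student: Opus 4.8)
The plan is to reduce everything to the explicit integral representation of $\bar G_n$ and then to a single Taylor increment estimate. First I would recall from the proof of Lemma~\ref{decomp_nu} that $\bar G_n=\sqrt{\Gamma_n}\,E_n^\beta$ with $E_n^\beta$ as in~\eqref{DEF_QT}, so that $|E_n^\beta|=|\bar G_n|/\sqrt{\Gamma_n}$ and it is enough to control, for each $k\in\leftB 1,n\rightB$, the term $\gamma_k^{3/2}\int_0^1(1-t)t\,dt\int_0^1 du\,\Lambda_{k-1}(t,u,X_{k-1})$. Writing $m_{k-1}:=X_{k-1}+\gamma_k b_{k-1}$, which is $\F_{k-1}$-measurable, and recalling from~\eqref{DEF_LAMBDA_K} that $\Lambda_{k-1}(t,u,X_{k-1})=\E\big[\langle D^3\varphi(m_{k-1}+ut\sqrt{\gamma_k}\,\sigma_{k-1}U_k),(\sigma_{k-1}U_k)^{\otimes3}\rangle\mid\F_{k-1}\big]$, the key observation is that the third-order moment-matching in~\A{GC} (namely $\E[U_k^iU_k^jU_k^\ell]=0$), together with the $\F_{k-1}$-measurability of $m_{k-1}$ and $\sigma_{k-1}$ and the independence of $U_k$, forces $\E\big[\langle D^3\varphi(m_{k-1}),(\sigma_{k-1}U_k)^{\otimes3}\rangle\mid\F_{k-1}\big]=0$. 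Since this quantity is constant in $(t,u)$, it integrates to $0$ against $(1-t)t\,dt\,du$, and so inside $\Lambda_{k-1}$ I may replace $D^3\varphi(m_{k-1}+ut\sqrt{\gamma_k}\sigma_{k-1}U_k)$ by the increment $D^3\varphi(m_{k-1}+ut\sqrt{\gamma_k}\sigma_{k-1}U_k)-D^3\varphi(m_{k-1})$.

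Then I would use the $\beta$-H\"older regularity of $D^3\varphi$ (i.e. $[\varphi^{(3)}]_\beta<+\infty$, which is part of $\varphi\in\mathcal{C}^{3,\beta}$) and the bound $\|\sigma_{k-1}\|\le\|\sigma\|_\infty$ from~\A{C2}: the above increment has norm at most $[\varphi^{(3)}]_\beta(ut\sqrt{\gamma_k}\,\|\sigma\|_\infty\,|U_k|)^\beta$, and contracting with $(\sigma_{k-1}U_k)^{\otimes3}$ and taking $\E[\,\cdot\mid\F_{k-1}]$ (using that~\A{GC} makes all moments of $|U_1|$ finite) gives
$$\Big|\int_0^1(1-t)t\,dt\int_0^1 du\,\Lambda_{k-1}(t,u,X_{k-1})\Big|\le[\varphi^{(3)}]_\beta\,\|\sigma\|_\infty^{3+\beta}\,\E\big[|U_1|^{3+\beta}\big]\,\gamma_k^{\beta/2}\int_0^1(1-t)t^{1+\beta}dt\int_0^1 u^\beta du.$$
Computing the two elementary integrals, $\int_0^1 u^\beta du=\tfrac1{1+\beta}$ and $\int_0^1(1-t)t^{1+\beta}dt=\tfrac1{2+\beta}-\tfrac1{3+\beta}=\tfrac1{(2+\beta)(3+\beta)}$, multiplying by $\gamma_k^{3/2}$, summing over $k$ and dividing by $\sqrt{\Gamma_n}$ produces exactly $a_n$, recalling $\Gamma_n^{((3+\beta)/2)}=\sum_{k=1}^n\gamma_k^{(3+\beta)/2}$. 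This is a deterministic upper bound on $|E_n^\beta|=|\bar G_n|/\sqrt{\Gamma_n}$, hence holds $a.s.$

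For the convergence of $a_n$, the only thing left is the asymptotics of $\Gamma_n^{((3+\beta)/2)}/\sqrt{\Gamma_n}$ when $\gamma_k\asymp k^{-\theta}$: a comparison with integrals gives $\Gamma_n\asymp n^{1-\theta}$ (or $\log n$ if $\theta=1$), and $\Gamma_n^{((3+\beta)/2)}$ is bounded when $\theta(3+\beta)/2>1$ and of order $n^{1-\theta(3+\beta)/2}$ (or $\log n$) otherwise, so that $\Gamma_n^{((3+\beta)/2)}/\sqrt{\Gamma_n}$ behaves like $n^{(1-\theta(2+\beta))/2}$ up to logarithms in the boundary cases; this tends to $0$ precisely when $\theta(2+\beta)>1$, i.e. $\theta>\tfrac1{2+\beta}$, and has a finite nonzero limit $a_\infty$ when $\theta=\tfrac1{2+\beta}$ (the exponent being exactly $0$). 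Finally, for~\eqref{CTR_BIAS_HOLDER}, since $|\bar G_n|\le\sqrt{\Gamma_n}\,a_n$ is a deterministic bound and $\lambda_n\ge0$, one has $\E\exp\big(\tfrac{2p\lambda_n}{\Gamma_n}|\bar G_n|\big)\le\exp\big(\tfrac{2p\lambda_n}{\sqrt{\Gamma_n}}a_n\big)$; taking the $\tfrac1{2p}$-th power yields the first inequality, and Young's inequality $xy\le\tfrac12(x^2+y^2)$ applied with $x=\lambda_n/\sqrt{\Gamma_n p}$ and $y=a_n\sqrt p$ yields the second.

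I do not expect a serious analytic obstacle; the one delicate point is the cancellation step, which works only because $D^3\varphi$ is anchored at the $\F_{k-1}$-measurable point $m_{k-1}=X_{k-1}+\gamma_k b_{k-1}$ rather than at $X_{k-1}$ — this is exactly how the remainder $\bar G_n$ was organized in Lemma~\ref{decomp_nu} — so that~\A{GC} annihilates the leading cubic term in the innovation and leaves only a H\"older increment of size $\gamma_k^{\beta/2}$. Everything else is bookkeeping of the Taylor and Beta-function constants together with a routine Riemann-sum comparison.
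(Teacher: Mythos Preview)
Your proposal is correct and follows essentially the same route as the paper: you start from the representation $\bar G_n=\sqrt{\Gamma_n}E_n^\beta$ via $\Lambda_{k-1}$, use the third-moment cancellation in \A{GC} to anchor $D^3\varphi$ at the $\F_{k-1}$-measurable point $m_{k-1}=X_{k-1}+\gamma_k b_{k-1}$, and then exploit the $\beta$-H\"older modulus of $D^3\varphi$ together with the elementary Beta integrals to obtain the constant $((1+\beta)(2+\beta)(3+\beta))^{-1}$. Your treatment of the asymptotics of $a_n$ and of~\eqref{CTR_BIAS_HOLDER} via the deterministic bound and Young's inequality is also exactly what is intended (the paper in fact leaves~\eqref{CTR_BIAS_HOLDER} implicit).
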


As indicated before, we now aim at controlling the remainders. In particular, from~\eqref{EXPLI_DEV} and~\eqref{DECOUP}, we are led to handle terms of the form 
$$\E\exp\Big (c \sum_{k=1}^n \gamma_k ^2 |b(X_{k-1})|^2\Big) \underset{\A{{\mathbf{\mathcal L}}_{ V } }}{\le} \E\exp\Big (c\, C_{_V} \sum_{k=1}^n \gamma_k ^2 |V(X_{k-1})|\Big)
$$
for small enough real constants $c>0$.

To this end, we will thoroughly rely on the following important integrability result for the Lyapunov function.
\begin{prop}\label{expV_int}
Under \A{A} there is a constant $c_V:=c_V(\A{A}) >0$ such that for all $\lambda\in[0,c_V] $, $\xi \in [0;1]$: 
$$
 I_V^\xi:=\sup_{n \geq 0 } \E\,[\exp(\lambda V_n^{\xi})] < + \infty.
$$
\end{prop}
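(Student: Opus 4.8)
The plan is to establish the uniform exponential integrability of $V_n^\xi$ by an inductive (recursive) argument on $n$, exploiting the Lyapunov drift condition \A{${\mathcal L}_V$} iii) at the level of the scheme, together with the sub-Gaussian concentration \A{GC} of the innovations and the sub-Gaussian integrability of $X_0$ from \A{C1}. The key preliminary step is to derive a \emph{one-step contraction estimate} for $\E[\exp(\lambda V_{n+1}^\xi)\mid \F_n]$ in terms of $\exp(\lambda V_n^\xi)$. To do this, I would first Taylor-expand $V$ along the transition \eqref{scheme}: writing $X_{n+1}=X_n+\gamma_{n+1}b_n+\sqrt{\gamma_{n+1}}\sigma_n U_{n+1}$, a second-order expansion of $V$ with the bound $\|D^2V\|_\infty<+\infty$ from \A{${\mathcal L}_V$} i) gives
\begin{equation*}
V(X_{n+1})\le V(X_n)+\nabla V(X_n)\cdot(\gamma_{n+1}b_n+\sqrt{\gamma_{n+1}}\sigma_n U_{n+1})+\tfrac12\|D^2V\|_\infty\,|\gamma_{n+1}b_n+\sqrt{\gamma_{n+1}}\sigma_n U_{n+1}|^2.
\end{equation*}
Taking conditional expectation and using $\E[U_{n+1}]=0$, $\E[U_{n+1}U_{n+1}^*]=I_r$, the drift term $\gamma_{n+1}(\nabla V\cdot b)(X_n)+\tfrac12\gamma_{n+1}\|D^2V\|_\infty\Tr(\Sigma_n)$ reconstructs (up to the $\tfrac12\Tr(\Sigma D^2V)$ piece, which is dominated by $\tfrac12\kappa\|D^2V\|_\infty\gamma_{n+1}$ via \A{C2}) a quantity controlled by $\gamma_{n+1}{\mathcal A}V(X_n)\le \gamma_{n+1}(-\alpha_V V(X_n)+\beta_V)$; the smallness of $\gamma_{n+1}$ imposed in \A{S} ii) (precisely $\gamma_k\le\tfrac12\alpha_V/(C_V\|D^2V\|_\infty)$ together with $|\nabla V|^2+|b|^2\le C_V V$) guarantees the remaining second-order and martingale contributions do not destroy the mean-reversion, leaving an effective drift of the form $\E[V(X_{n+1})\mid\F_n]\le (1-c\gamma_{n+1})V(X_n)+C'\gamma_{n+1}$ for suitable $c>0$, $C'$.

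Next I would upgrade this $L^1$-type drift to an exponential one. The martingale part $\sqrt{\gamma_{n+1}}\,\nabla V(X_n)\cdot\sigma_n U_{n+1}$ is, conditionally on $\F_n$, a Lipschitz function of $U_{n+1}$ with constant $\sqrt{\gamma_{n+1}}\,|\sigma_n^*\nabla V(X_n)|\le \sqrt{\gamma_{n+1}\kappa C_V V(X_n)}$, so \A{GC} (or its general form \eqref{GEN_GC}) yields a Gaussian bound on its exponential moments. Combining the deterministic drift decay with this Gaussian control, and using the elementary convexity inequality $\exp(\lambda(a+b)^\xi)\le\exp(\lambda a^\xi)\exp(\lambda b^\xi)$ for $\xi\in[0,1]$ (or treating $\xi=1$ and $\xi<1$ separately, the latter being easier because $v\mapsto v^\xi$ is sublinear), I would obtain a recursive inequality of the form
\begin{equation*}
\E[\exp(\lambda V_{n+1}^\xi)\mid\F_n]\le \exp\big(\lambda(1-c\gamma_{n+1})^\xi V_n^\xi+C''\lambda\gamma_{n+1}\big)\quad\text{for }\lambda\le c_V,
\end{equation*}
valid for $\lambda$ small enough (this is where the threshold $c_V$ enters), possibly after absorbing lower-order terms using $\gamma_{n+1}\le\gamma_1$ bounded. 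Iterating this from $n=0$ and using $\E[\exp(\lambda|X_0|^2)]<+\infty$ with $|V(X_0)|\le\bar c|X_0|^2$ for $|X_0|\ge K$ (from \A{S} i)) to start the recursion, the products $\prod_k(1-c\gamma_k)^\xi$ together with $\sum_k\gamma_k\to+\infty$ force the contribution of the initial condition to be bounded and the accumulated constant term $C''\lambda\sum\gamma_k$ to be compensated by the contraction, yielding a bound uniform in $n$.

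The main obstacle I anticipate is making the recursion \emph{genuinely contractive and uniform}: naively one gets $\E[\exp(\lambda V_n^\xi)]\le \exp(\lambda\prod(1-c\gamma_k)^\xi V_0^\xi)\cdot\exp(C''\lambda\Gamma_n)$ and the second factor blows up since $\Gamma_n\to+\infty$. The resolution is the standard but delicate "discrete Gronwall with drift" trick: one must track the geometric-like decay factor $\prod_{j=k+1}^n(1-c\gamma_j)^\xi\approx\exp(-c\xi(\Gamma_n-\Gamma_k))$ multiplying each increment $\gamma_k$ in the sum, so that $\sum_k\gamma_k\exp(-c\xi(\Gamma_n-\Gamma_k))$ stays bounded uniformly in $n$ (a Riemann-sum comparison with $\int_0^{\Gamma_n}e^{-c\xi(\Gamma_n-s)}ds\le 1/(c\xi)$). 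Handling this carefully at the level of \emph{exponential} moments rather than first moments — i.e. ensuring the nonlinearity $v\mapsto v^\xi$ and the conditional Jensen steps interact correctly with the per-step contraction — is the technical heart of the proof, and is precisely why the restriction to $\lambda\le c_V$ with $c_V$ depending on $\alpha_V,\|D^2V\|_\infty,\kappa,C_V$ and the step bounds in \A{S} is needed.
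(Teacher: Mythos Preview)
Your overall plan---Taylor expand $V$ along the scheme, use \A{${\mathcal L}_V$} iii) for the drift, \A{GC} for the innovation contribution, and induct---is exactly the paper's approach. The one-step estimate you sketch is essentially the paper's inequality
\[
V(X_n)\le V(X_{n-1})+\gamma_n\Big(-\tfrac{\alpha_V}{2}V(X_{n-1})+\tilde c\Big)+\sqrt{\gamma_n}\,\sigma_{n-1}U_n\cdot\nabla V(X_{n-1}+\gamma_n b_{n-1})+\tfrac{\gamma_n}{2}\|D^2V\|_\infty\|\sigma\|_\infty^2|U_n|^2,
\]
and the exponential moments of the two random terms are handled exactly as you indicate.

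Where you diverge from the paper, and make your life harder, is in closing the recursion. After bounding the noise contributions you arrive (for $\xi=1$) at
\[
\E\big[\exp(\lambda V(X_n))\big]\le \exp(\lambda\gamma_n\tilde\beta_V)\,\E\big[\exp\big(\lambda(1-\gamma_n\tilde\alpha_V)V(X_{n-1})\big)\big].
\]
The paper does \emph{not} iterate this additively in the exponent and then invoke a discrete Gronwall argument as you propose. Instead, since $1-\gamma_n\tilde\alpha_V\in[0,1)$, Jensen's inequality for the concave function $t\mapsto t^{1-\gamma_n\tilde\alpha_V}$ gives
\[
\E\big[\exp\big(\lambda(1-\gamma_n\tilde\alpha_V)V(X_{n-1})\big)\big]\le \Big(\E\big[\exp(\lambda V(X_{n-1}))\big]\Big)^{1-\gamma_n\tilde\alpha_V},
\]
so the recursion becomes multiplicative: $a_n\le e^{\lambda\gamma_n\tilde\beta_V}a_{n-1}^{1-\gamma_n\tilde\alpha_V}$. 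This map has $C_{V,\lambda}:=\exp(\lambda\tilde\beta_V/\tilde\alpha_V)$ as an attracting fixed point, and the induction closes immediately without any Gronwall-type summation. Your ``main obstacle'' thus evaporates; the blow-up $\exp(C''\lambda\Gamma_n)$ you feared never appears because the recursion is nonlinear in $a_{n-1}$, not additive.

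Two further simplifications you should adopt. First, do only $\xi=1$: since $V\ge v^*>0$, one has $V_n^\xi\le (v^*)^{\xi-1}V_n$, so the case $\xi<1$ follows for free from the case $\xi=1$ (this is the paper's Remark after the proof). Second, the inequality $\exp(\lambda(a+b)^\xi)\le\exp(\lambda a^\xi)\exp(\lambda b^\xi)$ you invoke is not needed and is in any case delicate when the increments have mixed signs; avoid it.
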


We now have the following results for the terms appearing in~\eqref{DECOUP}.
\begin{lemme}[Initial term]
\label{ineq_phi}
Let $q\in (1,+\infty)$ be fixed and $\lambda_n$ be as in~\eqref{DEF_LAMBDA} in Lemma~\ref{ineq_Psi}. For functions $\varphi $ satisfying~\eqref{GV}, {\it i.e.} there exists $C_{V,\varphi}>0$ such that for all $x\in \R^d $, $|\varphi(x)|\le C_{V,\varphi}(1+\sqrt{V(x)})$, for $p:=\frac{q}{q-1} $ and $j\in \{1,2\} $:
\begin{eqnarray*}
\left(\E \exp\Big( jp \lambda_n \frac{|L_n| }{ \Gamma_n}\Big) \right)^{\frac{1}{jp}} \leq 
(I_V^1)^{\frac{1}{jp}} \exp\left(\frac{(j+1) p C_{V,\varphi}^2 \lambda_n^2}{ c_V \Gamma_n^2}+\frac{c_V}{p}\right)
=
 (I_V^1)^{\frac{1}{jp}} \exp\left(\frac{ (j+1) p C_{V,\varphi}^2a^2 }{c_V    q^2  \|\sigma\|_\infty^4 \| \nabla \varphi  \|_{\infty}^4 \Gamma_n}+\frac{c_V}{p}\right),
\end{eqnarray*}
with $c_V,\ I_{V}^1 $ like in Proposition~\ref{expV_int}.
\end{lemme}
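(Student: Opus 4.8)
The plan is to bound the exponential moment of $|L_n|/\Gamma_n$ where $L_n = \varphi(X_n) - \varphi(X_0)$, using the growth hypothesis~\eqref{GV} together with the uniform exponential integrability of the Lyapunov function from Proposition~\ref{expV_int}. First I would use the triangle inequality $|L_n| \le |\varphi(X_n)| + |\varphi(X_0)|$ and the assumption $|\varphi(x)| \le C_{V,\varphi}(1+\sqrt{V(x)})$ to get the pointwise bound $|L_n| \le C_{V,\varphi}(2 + \sqrt{V_n} + \sqrt{V_0})$. Actually, since $V \ge v^* > 0$, it is cleaner to absorb the constant: write $2 + \sqrt{V_n} + \sqrt{V_0} \le C'(\sqrt{V_n} + \sqrt{V_0})$ for a suitable $C'$, or more directly just keep $|L_n| \le C_{V,\varphi}\sqrt{V_n} + C_{V,\varphi}\sqrt{V_0} + 2C_{V,\varphi}$ and treat the constant term as contributing a deterministic factor to the exponential.

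Next, setting $\mu := jp\lambda_n/\Gamma_n$, I would apply the elementary inequality $xy \le \frac{\varepsilon}{2}x^2 + \frac{1}{2\varepsilon}y^2$ (Young's inequality) to split $\mu \, C_{V,\varphi}\sqrt{V_n}$ into a term of the form $\frac{c_V}{j+1} V_n^{1/2}\cdot(\text{const})$ plus a term quadratic in $\mu$, so that after applying Hölder's inequality (or rather, using independence-free Cauchy–Schwarz type splitting between the $\sqrt{V_n}$ and $\sqrt{V_0}$ parts) the exponent involving $V_n$ has coefficient at most $c_V$ on $V_n^{1/2}$, making Proposition~\ref{expV_int} (with $\xi = 1/2$, so $I_V^{1/2}$) applicable. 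The factor $j+1$ in the statement and the splitting between $X_n$ and $X_0$ (the latter handled via \A{C1}, noting $\sqrt{V_0}\lesssim 1+|X_0|$ so $\exp(\lambda\sqrt{V_0})$ is integrable, in fact $I_V^{1/2}$ again covers it once one observes~\eqref{GV}-type control or directly that $X_0$ is sub-Gaussian) suggests the calibration: choose the Young parameter so that each of the two contributions $\sqrt{V_n}$, $\sqrt{V_0}$ receives coefficient exactly $c_V$ after a further Hölder split into two factors, which is where the $\frac{1}{jp}$ power and the $(j+1)$ multiplicities come from. One then collects: the $V_n^{1/2}$ and $V_0^{1/2}$ exponential moments are each $\le I_V^{1/2} < \infty$, and the quadratic-in-$\mu$ leftover gives $\exp\big(\text{const}\cdot \mu^2\big) = \exp\big(\text{const}\cdot (jp\lambda_n)^2/\Gamma_n^2\big)$, which upon substituting $\lambda_n = a\sqrt{\Gamma_n}/(q\|\sigma\|_\infty^2\|\nabla\varphi\|_\infty^2)$ from~\eqref{DEF_LAMBDA} becomes $\exp\big(\text{const}\cdot a^2/\Gamma_n\big)$, matching the claimed form with the explicit constant $\frac{(j+1)pC_{V,\varphi}^2 a^2}{c_V q^2 \|\sigma\|_\infty^4\|\nabla\varphi\|_\infty^4 \Gamma_n}$. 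The additive $c_V/p$ term comes from the constant piece $2C_{V,\varphi}$ after the Hölder rebalancing.

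The main obstacle — really the only nontrivial point — is the careful bookkeeping of the Young/Hölder parameters so that the coefficient in front of $V_n^{1/2}$ lands exactly in the admissible range $[0,c_V]$ required by Proposition~\ref{expV_int}, while simultaneously extracting the sharp quadratic dependence on $\lambda_n$ (and hence on $a$). A naive application of Young's inequality would leave a coefficient depending on $\lambda_n$ in front of $V_n^{1/2}$, which blows up; the trick is that $\lambda_n/\Gamma_n \to 0$, so for the quadratic term we keep the $\lambda_n$-dependence but for the linear-in-$\sqrt{V_n}$ term we must trade it away entirely against the quadratic term via the free Young parameter, choosing that parameter proportional to $\lambda_n/\Gamma_n$. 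Since $\lambda_n \asymp \sqrt{\Gamma_n}$, one has $\lambda_n/\Gamma_n \asymp 1/\sqrt{\Gamma_n} \to 0$, confirming the bound is nontrivial (the right-hand side tends to $(I_V^1)^{1/jp}\exp(c_V/p)$, a finite constant). I would also note that the statement writes $I_V^1$ rather than $I_V^{1/2}$; since $V \ge v^*$ one has $V^{1/2} \le V/\sqrt{v^*}$ on the relevant range or simply $\exp(\lambda V^{1/2}) \le \exp(\lambda')\exp(\lambda' V)$ for suitable $\lambda'$, so that an $I_V^{1/2}$ bound follows from $I_V^1$ up to harmless constants — I would use $I_V^1$ directly as in the statement, replacing $\sqrt{V_n} \le \frac{1}{2\eta} + \frac{\eta}{2} V_n$ for a small $\eta$ absorbed into $c_V$, which is in fact the cleanest route and explains why $I_V^1$ (not $I_V^{1/2}$) appears.
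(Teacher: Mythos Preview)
Your proposal is correct and follows essentially the same route as the paper: bound $|L_n|\le C_{V,\varphi}(2+\sqrt{V_0}+\sqrt{V_n})$ via~\eqref{GV}, split the $X_0$ and $X_n$ contributions by Cauchy--Schwarz, then apply Young's inequality $2jp\,C_{V,\varphi}\lambda_n\sqrt{V_i}/\Gamma_n \le c_V V_i + (jp)^2C_{V,\varphi}^2\lambda_n^2/(c_V\Gamma_n^2)$ so that Proposition~\ref{expV_int} (with $\xi=1$, giving $I_V^1$) applies; a second Young on the deterministic piece $2C_{V,\varphi}\lambda_n/\Gamma_n$ produces the extra $pC_{V,\varphi}^2\lambda_n^2/(c_V\Gamma_n^2)+c_V/p$, which is exactly where the factor $(j+1)$ and the additive $c_V/p$ come from. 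Your final remark about using $I_V^1$ rather than $I_V^{1/2}$ is the right resolution---the paper indeed lands on $c_V V_i$ (not $c_V\sqrt{V_i}$) after Young, so no detour through $\xi=1/2$ is needed.
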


\begin{lemme}[Remainders]\label{ineq_rest}
Let $q\in (1,+\infty)$ be fixed and $\lambda_n$ be as in Lemma~\ref{ineq_Psi}. Then, there exists $C_{\ref{ineq_rest_b}}:=C_{\ref{ineq_rest_b}}(\A{A},\varphi)$ such that for $p=\frac{q}{q-1}$:
\begin{equation}\label{ineq_rest_b}
\begin{split}
\left( \E  \exp\Big( \frac{4p \lambda_n}{ \Gamma_n}  \big|D_{2,\Sigma,n}\big|\Big)\right)^{\frac{1}{4p}} 
\leq \exp\left( C_{\ref{ineq_rest_b}}\frac{p \lambda_n^2  (\Gamma_n^{(2)})^2}{  \Gamma_n^2}\right) (I_V^1)^{\frac1{4p}}.
\end{split}
\end{equation}
We also have:
\begin{trivlist}
\item[-] If \textcolor{black}{the mapping $x\mapsto \langle \nabla \varphi(x), b(x) \rangle $ is Lipschitz continuous},
then there exists  $C_{\ref{ineq_rest_b2_phi}}:=C(\A{A},\varphi)>0$ such that
\begin{equation}\label{ineq_rest_b2_phi}
\begin{split}
\left(\E  \exp\Big( \frac{4p \lambda_n}{ \Gamma_n} \big|D_{2,b,n}\big|\Big) \right)^{\frac{1}{4p}} 
\leq \textcolor{black}{\exp\Big(C_{\ref{ineq_rest_b2_phi}}
\frac{p\lambda_n^2 (\Gamma_n^{(2)})^2}{\Gamma_n^2}
\Big)(I_V^1)^{\frac1{4p}}}.
\end{split}
\end{equation}

\item[-] For $a\le \frac{c_v  q}{4C_{_V}p}\frac{\|\sigma\|_\infty^2\|\nabla \varphi\|_\infty^2}{\|D^2 \varphi\|_\infty^2} \frac{\sqrt{\Gamma_n}}{\Gamma_n^{(2)}}$, there exists an $\R^+$-valued sequence $(v_n)_{n\ge 1}$ such that
$\big|v_n\big|\le C_{\ref{ineq_rest_b2}}:=C_{\ref{ineq_rest_b2}}(\A{A},\varphi) $ and
\begin{eqnarray}\label{ineq_rest_b2} 
\left( \E  \exp\Big( \frac{4p \lambda_n}{ \Gamma_n} \big|D_{2,b,n}\big|\Big) \right)^{\frac{1}{4p}} \leq (I_V^1)^{v_n}.
\end{eqnarray}
Also, $v_n\underset{n}{\rightarrow} v_\infty$ where $v_\infty=0$ if $ \theta>1/3$ and $v_\infty>0$ for $\theta=1/3 $.
\end{trivlist}
\end{lemme}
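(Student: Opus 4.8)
The starting point is the explicit formulas for $D_{2,\Sigma,n}$ and $D_{2,b,n}$ from~\eqref{DECOUP}, together with the uniform Lyapunov integrability of Proposition~\ref{expV_int}. The strategy is to bound each of the two remainders pathwise by a deterministic multiple of the running sum $\sum_k \gamma_k^2 V(X_{k-1})^\xi$ for an appropriate $\xi\in[0,1]$, then feed that into an exponential moment estimate. For~\eqref{ineq_rest_b}, I would use that $D^3\varphi$ is $\beta$-Hölder (and, in the genuinely needed range, that $\varphi$ is at least ${\mathcal C}^{3,\beta}$, so $D^2\varphi$ is Lipschitz) to write $|D^2\varphi(X_{k-1}+\gamma_k b_{k-1})-D^2\varphi(X_{k-1})|\le [\varphi^{(2)}]_1\,\gamma_k|b_{k-1}|$, hence, using \A{C2} ($\|\Sigma_{k-1}\|\le\kappa$) and $|b_{k-1}|^2\le C_{_V}V_{k-1}$ from \A{${\mathcal L}_V$}~ii),
\[
|D_{2,\Sigma,n}|\le C\sum_{k=1}^n \gamma_k^2\,|b_{k-1}|\le C\sum_{k=1}^n\gamma_k^2\sqrt{V_{k-1}}.
\]
By Hölder's inequality over $k$ with weights $\gamma_k^2/\Gamma_n^{(2)}$ (or a Cauchy–Schwarz/Jensen step), $\big(\sum_k\gamma_k^2\sqrt{V_{k-1}}\big)$ is controlled in exponential moment by $\sup_n\E[\exp(\lambda V_n^{1/2})]$-type quantities; more simply, convexity of $x\mapsto e^{cx}$ gives $\exp\big(\frac{4p\lambda_n}{\Gamma_n}|D_{2,\Sigma,n}|\big)\le \frac{1}{\Gamma_n^{(2)}}\sum_k\gamma_k^2\exp\big(\frac{4p\lambda_n C\,\Gamma_n^{(2)}}{\Gamma_n}\sqrt{V_{k-1}}\big)$, and then one takes expectations, applies $\sqrt{V}\le \varepsilon V + C_\varepsilon$ and Proposition~\ref{expV_int} with $\xi=1$, and optimizes. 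The dependence $\Gamma_n^{(2)}/\Gamma_n$ that appears squared in the exponent of~\eqref{ineq_rest_b} comes precisely from the $\lambda_n^2(\Gamma_n^{(2)})^2/\Gamma_n^2$ bookkeeping after a Gaussian/quadratic bound on the per-step exponential; since $\lambda_n\asymp a\sqrt{\Gamma_n}$, this term is $O(a^2(\Gamma_n^{(2)})^2/\Gamma_n)\to0$ for $\theta>1/(2+\beta)$, which is why the remainder is asymptotically negligible.

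For $D_{2,b,n}$ the two regimes reflect two ways of exploiting the $\gamma_k^2$ gain. Under the extra decay $|D^2\varphi(x)|\le C_\varphi/(1+|x|)$ one bounds $|\Tr(D^2\varphi(\cdot)b_{k-1}^{\otimes2})|\le C_\varphi|b_{k-1}|^2/(1+|X_{k-1}+t\gamma_kb_{k-1}|)\le C|b_{k-1}|$ (using again $|b|^2\le C_{_V}V$ and, on $\{|x|\ge K\}$, $|b(x)|\le C|x|$, while on the compact complement everything is bounded), so $|D_{2,b,n}|\le C\sum_k\gamma_k^2\sqrt{V_{k-1}}$ exactly as before, yielding~\eqref{ineq_rest_b2_phi} with the same $\Gamma_n^{(2)}/\Gamma_n$ scaling plus the harmless $1/(2p)$ from the $\sqrt V\le\varepsilon V+C_\varepsilon$ trick. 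Without that extra decay one only has $|\Tr(D^2\varphi(\cdot)b_{k-1}^{\otimes2})|\le \|D^2\varphi\|_\infty|b_{k-1}|^2\le \|D^2\varphi\|_\infty C_{_V}V_{k-1}$, so $|D_{2,b,n}|\le C\sum_k\gamma_k^2 V_{k-1}$; the exponential moment $\E\exp\big(\frac{4p\lambda_n C}{\Gamma_n}\sum_k\gamma_k^2 V_{k-1}\big)$ is then handled by convexity as above but now requires $\frac{4p\lambda_n C\Gamma_n^{(2)}}{\Gamma_n}\le c_V$, i.e. (since $\lambda_n = a\sqrt{\Gamma_n}/(q\|\sigma\|_\infty^2\|\nabla\varphi\|_\infty^2)$) the stated restriction $a\le \frac{c_V q}{4C_{_V}p}\frac{\|\sigma\|_\infty^2\|\nabla\varphi\|_\infty^2}{\|D^2\varphi\|_\infty^2}\frac{\sqrt{\Gamma_n}}{\Gamma_n^{(2)}}$; the resulting bound is $(I_V^1)^{v_n}$ with $v_n:=\frac{1}{4p}\cdot(\text{a bounded expression in }\frac{\lambda_n\Gamma_n^{(2)}}{\Gamma_n})$, which is $O\big(a\Gamma_n^{(2)}/\sqrt{\Gamma_n}\big)$, hence $\to0$ iff $\Gamma_n^{(2)}/\sqrt{\Gamma_n}\to0$, i.e. $\theta>1/3$, and converges to a strictly positive constant at $\theta=1/3$.

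The main obstacle, and the place where assumption \A{S} (and the careful bookkeeping promised in the Notations section via the sequences ${\mathscr R}_n,e_n$) really enters, is making the per-step exponential bound clean enough that the constants $C_{\ref{ineq_rest_b}},C_{\ref{ineq_rest_b2_phi}},C_{\ref{ineq_rest_b2}}$ are genuinely \emph{uniform in $n$}: one must control $1+|X_{k-1}+t\gamma_kb_{k-1}|$ from below (this is where $\gamma_k\le\frac12(C_{_V}\bar c)^{-1/2}$ from \A{S}~ii) is used, to keep the perturbed point comparable to $X_{k-1}$), split $\R^d$ into $\{|x|\ge K\}$ and its bounded complement so that the quadratic growth bounds $|V(x)|\le\bar c|x|^2$, $|b(x)|\le\sqrt{C_{_V}\bar c}|x|$ of \A{S}~i) apply, and only then invoke Proposition~\ref{expV_int}. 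Once those uniform bounds are in place, the convexity-plus-Lyapunov argument and the elementary inequality $\sqrt V\le \varepsilon V+\frac{1}{4\varepsilon}$ give all three displayed estimates, with the $\Gamma_n^{(2)}/\Gamma_n$ (resp. $\Gamma_n^{(2)}/\sqrt{\Gamma_n}$) factors tracked explicitly so that the limits of $v_n$ are as claimed.
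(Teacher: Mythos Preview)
Your plan is correct and follows essentially the same route as the paper's proof: both use the Lipschitz bound on $D^2\varphi$ for $D_{2,\Sigma,n}$, the partition $\{|x|\le K\}\cup\{|x|>K\}$ together with \A{S} to handle the decay case for $D_{2,b,n}$, Jensen's inequality with respect to the measure $\frac{1}{\Gamma_n^{(2)}}\sum_k\gamma_k^2\delta_k$, and the Young inequality $\sqrt V\le \varepsilon V+C_\varepsilon$ to invoke Proposition~\ref{expV_int}; the paper's only additional wrinkle is that in the constrained case~\eqref{ineq_rest_b2} it uses Jensen for the \emph{concave} map $x\mapsto x^{\bar v_n}$ (with $\bar v_n\le 1$ precisely under the stated restriction on $a$) rather than a Young split, which is what produces the clean $(I_V^1)^{v_n}$ form.
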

\noindent \textbf{Proof of Theorem ~\ref{ineq_fin}.}
From Lemma~\ref{ineq_Psi} we get:
\begin{equation}
\label{CONC_GAUSS_PREAL}
\left(\E  \exp\Big( -q\lambda_n \frac{M_n}{\Gamma_n} \Big)\right)^{\frac 1q} \exp\Big( -\frac{a  \lambda_n}{ \sqrt{\Gamma_n}} \Big)
\leq
\exp\left( - \frac{ a^2}{2q \|\sigma\|_\infty^2 \|\nabla \varphi\|_\infty^2}\right).
\end{equation}
\begin{trivlist}
 \item[\textit{(a)}] We deal with the case $\beta \in (0,1], \theta\in (\frac1{2+\beta},1] $. 
 
  
 \item[\textit{(i)}] \textcolor{black}{We suppose that the mapping $x\mapsto \langle \nabla \varphi(x),b(x)\rangle $ is Lipschitz continuous}. 
 Plugging in~\eqref{DECOUP_TCHEB} the controls from~\eqref{CONC_GAUSS_PREAL}, Lemma~\ref{CTR_BIAS_1} equation~\eqref{CTR_BIAS_HOLDER}, Lemma~\ref{ineq_phi} (with $j=2$) and Lemma~\ref{ineq_rest} (equations~\eqref{ineq_rest_b},~\eqref{ineq_rest_b2_phi}), we get:
 \begin{eqnarray}
\P\left[ \nu_n( \mathcal{A} \varphi ) \geq \frac a{\sqrt{\Gamma_n}} \right] \leq  
\exp\left( - \frac{ a^2}{2q \|\sigma \|_\infty^2 \|\nabla \varphi\|_\infty^2}
\right) \exp\Big(\frac{\lambda_n^2}{2\Gamma_np}+\frac{pa_n^2}{2}\Big)  \exp\left(\frac{3 p C_{V,\varphi}^2 \lambda_n^2}{ c_V \Gamma_n^2}+\frac{c_V}{p}\right)(I_V^1)^{\frac 1{2p}}\nonumber\\
\times\exp\left ( C_{\ref{ineq_rest_b}}\frac{p \lambda_n^2  (\Gamma_n^{(2)})^2}{  \Gamma_n^2}\right) (I_V^1)^{\frac 1{4p}}\times \exp\left(C_{\ref{ineq_rest_b2_phi}}
\textcolor{black}{\frac{p\lambda_n^2 (\Gamma_n^{(2)})^2}{\Gamma_n^2}}
\right)(I_V^1)^{\frac1{4p}}\nonumber\\
\le  (I_V^1)^{\frac 1p}\exp\left(-\frac{a^2}{2q \|\sigma\|_\infty^2 \|\nabla \varphi\|_\infty^2}\Big(1-\frac{1}{q\|\sigma\|_\infty^2\|\nabla \varphi\|_\infty^2}\Big\{\frac{p}{\Gamma_n}\Big(\frac{6C_{V,\varphi}^2}{c_V}+\textcolor{black}{2\big[C_{\ref{ineq_rest_b}}+ C_{\ref{ineq_rest_b2_phi}}]}(\Gamma_n^{(2)})^2  \Big)+\frac{1}{p}\Big\}\Big)\right)\nonumber\\ \times \exp\Big(\textcolor{black}{\frac{c_V}{p}} 
+\frac{pa_n^2}{2}\Big).\label{BD_P_THM_2}
\end{eqnarray}
 Recall now that for $\theta> \frac{1}{2+\beta}\ge 1/3 $, $\Gamma_n^{(\frac{3+\beta}2)}/\sqrt{\Gamma_n} \underset{n}{\ra}0,\ \Gamma_n^{(2)}/\sqrt{\Gamma_n} \underset{n}{\ra}0$ (see Lemma~\ref{CTR_BIAS_1} and Remark~\ref{rem_rate_BIS}).
 We now  take $p:=p_n \underset{n}{\rightarrow}+\infty$, and therefore $q:=q_n\underset{n}{\ra}1$, such that $p_n^{1/2} \frac{\Gamma_n^{(\frac{3+\beta}{2})}}{\sqrt{\Gamma_n}}\underset{n}{\rightarrow} 0$ so that from Lemma~\ref{CTR_BIAS_1}, $p_n a_n^2\underset{n}{\rightarrow} 0 $. Since $\frac{\Gamma_n^{(\frac{3+\beta}{2})}}{\sqrt{\Gamma_n}}\ge \frac{\Gamma_n^{(2)}}{\sqrt{\Gamma_n}} $ this in turn implies:
 \begin{equation}
 \label{DEF_DN_A}
 d_n:=  \frac{1}{q_n\|\sigma\|_\infty^2\|\nabla \varphi\|_\infty^2}\Big\{ \frac{p_n}{ \Gamma_n }\Big(\frac{6C_{V,\varphi}^2}{c_V}+\big[2C_{\ref{ineq_rest_b}}+3 C_{\ref{ineq_rest_b2_phi}}\big](\Gamma_n^{(2)})^2\Big) +\frac{1}{p_n} \Big\}\underset{n}{\rightarrow} 0. 
 \end{equation}
We conclude from~\eqref{BD_P_THM_2} setting $c_n=q_n^{-1}(1-d_n), C_n:= (I_V^1)^{\frac 1{p_n}}\exp(\frac{1}{p_n}[c_V+\frac{C_{\ref{ineq_rest_b2_phi}}}{2}]+\frac{p_na_n^2}{2})\underset{n}{\rightarrow}1$. Observe that taking an increasing sequence $(p_n)_{n\ge 1}$ readily yields $C_n \downarrow_n 1 $, and $q_n\downarrow_n 1 $. Also, the sequence $(p_n)_{n\ge 1} $ can be chosen in order to have, for $n$ large enough, $d_n\downarrow_n 0 $ so that $c_n\uparrow_n 1 $.
\item[\textit{(ii)}] Assume $a\le \frac{c_V  q}{4C_{_V}p}\frac{\|\sigma\|_\infty^2\|\nabla \varphi\|_\infty^2}{\|D^2 \varphi\|_\infty^2}\frac{\sqrt{\Gamma_n}}{\Gamma_n^{(2)}}$.
 Plugging in~\eqref{DECOUP_TCHEB} the controls from~\eqref{CONC_GAUSS_PREAL}, Lemma~\ref{CTR_BIAS_1}, equation~\eqref{CTR_BIAS_HOLDER} , Lemmas~\ref{ineq_phi} (with $j=2 $),~\ref{ineq_rest} (equations~\eqref{ineq_rest_b},~\eqref{ineq_rest_b2}) we then derive:
 \begin{eqnarray}
\P\Big[ \nu_n( \mathcal{A} \varphi )
 \geq \frac a{\sqrt{\Gamma_n}} \Big] 
\leq  
 \exp\left( - \frac{ a^2}{2q \|\sigma\|_\infty^2 \|\nabla \varphi\|_\infty^2}\right) \exp\Big(\frac{\lambda_n^2}{2\Gamma_np}+\frac{pa_n^2}{2}\Big)  \exp\left(\frac{3 p C_{V,\varphi}^2 \lambda_n^2}{ c_V \Gamma_n^2}+\frac{c_V}{p}\right)(I_V^1)^{\frac 1{2p}}\nonumber\\
\times\exp\left( C_{\ref{ineq_rest_b}}\frac{p \lambda_n^2  (\Gamma_n^{(2)})^2}{  \Gamma_n^2}\right) (I_V^1)^{\frac1{4p}}(I_V^1)^{v_n}\nonumber\\
\le 
(I_V^1)^{v_n+\frac3{4p}}\exp\left( \frac{c_V}{p}+\frac{pa_n^2}{2}\right)\exp\left(-\frac{a^2}{2q \|\sigma\|_\infty^2 \|\nabla \varphi\|_\infty^2}\Big(1-\frac{1}{q\|\sigma\|_\infty^2 \|\nabla \varphi\|_\infty^2}\Big\{\frac{p}{\Gamma_n }\Big(\frac{6C_{V,\varphi}^2}{c_V}+2C_{\ref{ineq_rest_b}}(\Gamma_n^{(2)})^2  \Big)+\frac{1}{p}\Big\}\Big)\right).\nonumber\\
\label{BD_P_THM_1}
\end{eqnarray} 
Since $\theta> \frac{1}{2+\beta}\ge 1/3 $ (see Remark~\ref{rem_rate_BIS}), 
we again take $p:=p_n \uparrow_n
+\infty$ so that $p_n^{1/2} a_n \underset{n}{\rightarrow} 0 $ which also guarantees:
 \begin{equation}
 \label{DEF_DN_B}
 d_n:=\frac{1}{q_n \|\sigma \|_\infty^2\|\nabla \varphi\|_\infty^2}\Big\{ p_n \Big(\frac{6C_{V,\varphi}^2}{ c_V\Gamma_n }+\frac{2C_{\ref{ineq_rest_b}} (\Gamma_n^{(2)})^2}{ \Gamma_n}\Big)+\frac{1}{p_n}\Big\}\underset{n}{\ra}
 0. 
 \end{equation}
 
 In this case, we derive the result by setting $c_n:=q_n^{-1}(1-d_n) \underset{n}{\rightarrow } 
 1 ,\ C_n:=(I_V^1)^{v_n+\frac3{4p_n}}\exp(\frac{c_V}{p_n}+\frac{p_na_n^2}2)
 \underset{n}{\rightarrow }
 1$ (see the limits of $v_n$ following equation~\eqref{ineq_rest_b2} and~\eqref{DEF_VN}). Again, $(p_n)_{n\ge 1} $ can be chosen in order to have the stated monotonicity for $n$ large enough. Set now 
 \begin{equation}
\label{DEF_CHI_N}
 \chi_n:=\frac{c_V\|\sigma\|_\infty^2\|\nabla \varphi\|_\infty^2}{4C_{_V} \|D^2\varphi\|_\infty^2}\frac{q_n}{p_n},
 \end{equation}
  so that $a\le \chi_n \frac{\sqrt{\Gamma_n}}{\Gamma_n^{(2)}} $. Thus, the slower $p_n$ goes to infinity, the wider the domain of validity  for the estimate in the parameter $a$.

\item[\textit{(b)}]It remains to analyze the case $\beta\in (0,1], \theta=\frac{1}{2+\beta} $. Let us deal with  $\beta=1 $.
From~\eqref{DECOUP_TCHEB_BIAIS}, the controls of~\eqref{CONC_GAUSS_PREAL} and  Lemma~\ref{ineq_phi} (with $ j=1$) we get:
 \begin{eqnarray*}
\P\left[ \nu_n( \mathcal{A} \varphi )+\frac{\bar G_n+D_{2,n}}{\Gamma_n}\geq \frac a{\sqrt{\Gamma_n}} \right] \leq  \exp\left( - \frac{ a^2}{2q \|\sigma \|_\infty^2 \|\nabla \varphi\|_\infty^2}\right)   \exp\left(\frac{2 p C_{V,\varphi}^2 \lambda_n^2}{ c_V \Gamma_n^2}+\frac{c_V}{p}\right)(I_V^1)^{\frac 1{p}}.
\end{eqnarray*}
Recalling the definition of $\lambda_n $ in~\eqref{DEF_LAMBDA}, we conclude as previously with obvious modifications of $(c_n)_{n\ge 1}$, $(C_n)_{n\ge 1} $. The case $\beta \in (0,1)$ is handled similarly starting from~\eqref{DECOUP_TCHEB_BIAS_BETA_NEQ_1TIERS}.

Also, when $D^3\varphi\in {\mathcal C}^1 $, we derive similarly to the proof of Theorem 10 in~\cite{lamb:page:02} that ${\mathcal B}_{n,1}\underset{n}{\rightarrow}-\widetilde \gamma m $.

Eventually, the final control involving the two sided deviation is derived by symmetry.
\hfill $\square$
\end{trivlist}
%

\subsection{Proof of the Technical Lemmas}
This section is devoted to the proof of the previously used Lemmas~\ref{decomp_nu}--\ref{ineq_rest}  and Proposition~\ref{expV_int} which were the key ingredients to derive Theorem~\ref{ineq_fin}.\\

\label{SEC_TEC}
\noindent\textbf{Proof of Lemma~\ref{decomp_nu}.}
For $k\in \leftB 1, n\rightB $, we first write:
\begin{equation}
\label{SPLIT_TRANS}
\begin{split}
\varphi(X_k)-\varphi(X_{k-1})&=(\varphi(X_k)- \varphi(X_{k-1} +\gamma_k b_{k-1}))+(\varphi(X_{k-1} +\gamma_k b_{k-1})-\varphi(X_{k-1}))\\
&=:\textcolor{black}{T_{k-1,r}(\varphi)+T_{k-1,d}(\varphi)},
\end{split}
\end{equation}
in order to split the \textcolor{black}{random} and \textcolor{black}{deterministic} contributions in the transitions of the scheme~\eqref{scheme}. 

We then perform a Taylor expansion with integral remainder at order 2 for the function $\varphi$ in the two terms of the r.h.s. of~\eqref{SPLIT_TRANS}. Namely, with the above notations:
\begin{eqnarray*}
T_{k-1,d}(\varphi) 
& = & \gamma_k b_{k-1} \cdot \nabla \varphi(X_{k-1})  + \textcolor{black}{\gamma_k \int_0^1 \langle \nabla  \varphi (X_{k-1} + t \gamma_k b_{k-1})-\nabla \varphi(X_{k-1}),b_{k-1}\rangle dt},
\\
T_{k-1,r}(\varphi)
&= & \sqrt{\gamma_k} \sigma_{k-1} U_k \cdot \nabla \varphi( X_{k-1} +\gamma_k b_{k-1} )  
\\
&& +\gamma_k  \int_0 ^1 ( 1-t) \Tr\Big (D^2\varphi ( X_{k-1} +\gamma_k b_{k-1} + t \sqrt{\gamma_k} \sigma_{k-1} U_k ) \sigma_{k-1} U_k\otimes U_k  \sigma_{k-1}^*\Big)dt.
\end{eqnarray*}
Hence,
\begin{eqnarray}
\varphi ( X_k)  \! \!& \! \!- \! \!& \! \! \varphi ( X_{k-1} )  =  \gamma_k \mathcal{A} \varphi(X_{k-1}) 
\nonumber\\
 &&+ 
\textcolor{black}{\gamma_k \int_0^1 \langle \nabla  \varphi (X_{k-1} + t \gamma_k b_{k-1})-\nabla \varphi(X_{k-1}),b_{k-1}\rangle dt}
+
\sqrt{\gamma_k} \sigma_{k-1} U_k \cdot \nabla \varphi( X_{k-1} +\gamma_k b_{k-1} )
\nonumber\\
&&+ \gamma_k \int_0 ^1 ( 1-t) \Tr\Big (  D^2 \varphi ( X_{k-1} +\gamma_k b_{k-1} + t \sqrt{\gamma_k} \sigma_{k-1} U_k )\sigma_{k-1}U_k\otimes U_k \sigma_{k-1}^*   -  D^2\varphi(X_{k-1} )\Sigma_{k-1} \Big) dt
\nonumber\\
&=&  \gamma_k \mathcal{A} \varphi(X_{k-1})  + \textcolor{black}{\gamma_k \int_0^1 \langle \nabla  \varphi (X_{k-1} + t \gamma_k b_{k-1})-\nabla \varphi(X_{k-1}),b_{k-1}\rangle dt}
\nonumber\\
&&+  \gamma_k  \int_0^1 (1-t) \Tr\Big(\big( D^2 \varphi ( X_{k-1} +  \gamma_k b_{k-1} ) - D^2\varphi( X_{k-1})\big) \Sigma_{k-1} \Big)dt + \psi_k(X_{k-1}, U_k)\nonumber\\
&=:& \gamma_k \mathcal{A} \varphi(X_{k-1})+D_{2,b}^k+D_{2,\Sigma}^k+\psi_k(X_{k-1}, U_k),
\label{DECOUP_PREAL_LEMME1}
\end{eqnarray}
where 
\begin{eqnarray}
 \label{DEF_PSI_K} 
& \psi_k(X_{k-1}, U_k) =\sqrt{\gamma_k} \sigma_{k-1} U_k \cdot \nabla \varphi( X_{k-1} +\gamma_k b_{k-1} ) \hskip 8,5cm \notag\\
 &    +\; \gamma_k  \int_0 ^1 ( 1-t) \Tr\Big (  D^2\varphi  ( X_{k-1} +\gamma_k b_{k-1} + t \sqrt{\gamma_k} \sigma_{k-1} U_k ) \sigma_{k-1}U_k\otimes U_k \sigma_{k-1}^*   - D^2\varphi ( X_{k-1} +\gamma_k b_{k-1}  ) \Sigma_{k-1} \Big )dt.\hskip -1cm \nonumber\\
\end{eqnarray}
Observe now that, conditionally to $\F_{k-1} $, the mapping  $ u\mapsto \psi_k ( X_{k-1}, u)$ is Lipschitz continuous: indeed, the innovation $U_k$ does not appear in the other contributions of the right side of~\eqref{DECOUP_PREAL_LEMME1}. Consequently, as $\varphi $ is Lispchitz continuous we derive, for all $(u,u')\in (\R^d)^2$:
\begin{equation*}
|\psi_k ( X_{k-1}, u)-\psi_k ( X_{k-1}, u')|\le \sqrt{\gamma_k}  \|\sigma_{k-1}\|\,\|\nabla \varphi\|_{\infty}|u-u'|.
\end{equation*}
The result is obtained by summing up the previous identities from $k=1 $ to $n$, observing, with the notations of~\eqref{DECOUP}, that $L_n=\sum_{k=1}^n \varphi(X_k)-\varphi(X_{k-1}), D_{2,b,n}=\sum_{k=1}^n D_{2,b}^k,\ D_{2,\Sigma,n}=\sum_{k=1}^n D_{2,\Sigma}^k, \ G_n:=\sum_{k=1}^n \psi_k ( X_{k-1}, U_k)$. \hfill $\square $\\
\\
\noindent\textbf{Proof of Lemma~\ref{ineq_Psi}.}
The idea is to use conditionally and iteratively the Gaussian concentration property \A{GC} of the innovation. 
Let us note that this strategy was already the key ingredient in~\cite{frik:meno:12}.
In the current framework, we exploit that  the functions $u\mapsto \Delta_k(X_{k-1},u):=\psi_k(X_{k-1},u)-\E\,[\psi_k(X_{k-1},U_k)|\F_{k-1}]$ are conditionally independent w.r.t. $\F_{k-1} $ and Lipschitz continuous with constant $\sqrt{\gamma_{k}}\|\sigma\|_\infty \|\nabla \varphi\|_\infty $ by Lemma~\ref{decomp_nu}. We thus write:
\begin{eqnarray}
\E\exp\Big(-\frac{q \lambda}{\Gamma_n}M_n \Big)&=& \E \exp\left(- \frac{q \lambda}{\Gamma_n} \sum_{k=1}^n \Delta_k(X_{k-1}, U_k) \right)
\nonumber \\
&=&  \E\Big[\exp\Big(-\frac{q \lambda}{\Gamma_n} \sum_{k=1}^{n-1} \Delta_k(X_{k-1}, U_k) \Big)
 \E \Big[ \exp\Big( -\frac{q \lambda}{ \Gamma_n} \Delta_{n}(X_{n-1}, U_n) \Big) | \mathcal{F}_{n-1} \Big] \Big]
 \nonumber\\
&\leq&  \E\Big[\exp\Big(-\frac{q \lambda}{\Gamma_n} \sum_{k=1}^{n-1} \Delta_k(X_{k-1}, U_k) \Big) 
\exp\Big( \frac{q^2 \lambda^2 }{2 \Gamma_n^2} \gamma_n \| \sigma\|_\infty^2\| \nabla \varphi \|_{\infty}^2  \Big)\Big]\label{PREAL_ITE},
\end{eqnarray}
where we used \A{GC} in the third line recalling as well that $\E\,[\Delta_n(X_{n-1},U_n)|\F_{n-1}]=0 $.

Iterating the process over $k$, we obtain:
 \begin{eqnarray}
\label{CTR_MART}
\left(\E\exp\Big(-\frac{q \lambda}{\Gamma_n}M_n \Big)\right)^{\frac 1q}=\left( \E  \exp\Big( -\frac{q \lambda}{ \Gamma_n} \sum_{k=1}^n \Delta_k(X_{k-1}, U_k)\Big) \right)^{\frac 1q}
\leq \exp\Big( \frac{ q\lambda^2  \|  \sigma \|_{\infty}^2 \| \nabla \varphi \|_{\infty}^2 }{2 \Gamma_n} \Big). 
\end{eqnarray}
Finally,
$$\exp\Big(-\frac{\lambda a}{\sqrt{\Gamma_n}}\Big) \left( \E\exp\Big(-\frac{q\lambda}{\Gamma_n} M_n\Big )\right)^{\frac 1q}\le \exp\Big(\frac{g(\lambda)}{\sqrt {\Gamma_n}} \Big),
$$
where $g:\R^+\rightarrow \R $ is defined by $g(\lambda)= -\frac{a}{\sqrt{\Gamma_n}}\lambda+\frac{q \lambda^2}{2\Gamma_n}\|\sigma\|_\infty^2  \|\nabla \varphi\|_\infty^2$. As $a>0$, the function attains its minimum at $\lambda_n $ given in~\eqref{DEF_LAMBDA}. This eventually yields the expected bound.
\hfill $\square $
\vspace*{10pt}

\noindent\textbf{Proof of Lemma~\ref{CTR_BIAS_1}.}
From the definition in~\eqref{DEF_PSI_K} and the Fubini theorem, we have that for all $k \in \leftB 1,n \rightB$:
\begin{eqnarray}
\E\,[ \psi_k(X_{k-1}, U_k) | \mathcal{F}_{k-1}]
  &= \gamma_k \int_0 ^1  (1-t) \Tr  \Big( \E\big[D^2\varphi  ( X_{k-1} + \gamma_k b_{k-1}  + t \sqrt{\gamma_k} \sigma_{k-1} U_k ) \sigma_{k-1} U_k\otimes U_k \sigma_{k-1}^* \nonumber\\
&
 - D^2\varphi ( X_{k-1} + \gamma_k b_{k-1} ) \Sigma_{k-1}| \mathcal{F}_{k-1}\big] \Big) dt.
\label{DEV_EC}
\end{eqnarray}
Recalling that $U_k$ has the same moments as the  standard Gaussian  random variable up to order three (see \A{GC}) and is independent of $\F_{k-1} $, a Taylor expansion yields:
\begin{eqnarray*}
&& \E\Big[\Tr\Big( D^2\varphi ( X_{k-1} + \gamma_k b_{k-1}  + t \sqrt{\gamma_k} \sigma_{k-1} U_k )\sigma_{k-1} U_k\otimes U_k \sigma_{k-1}^* - D^2 \varphi ( X_{k-1} + \gamma_k b_{k-1} ) \Sigma_{k-1}\Big) \Big| \mathcal{F}_{k-1}\Big]
\\
&=& \Tr\Big( D^2 \varphi ( X_{k-1} + \gamma_k b_{k-1} )\sigma_{k-1}\E\,[U_k\otimes U_k]\sigma_{k-1}^*\Big) \\
&& +\int_0 ^1 \E\Big[\Tr\Big(  \big( D^3 \varphi( X_{k-1} + \gamma_k b_{k-1} + u t\sqrt{\gamma_k} \sigma_{k-1} U_k ) t \sqrt{\gamma_k} \sigma_{k-1} U_k\big)\big(\sigma_{k-1} U_k\otimes U_k \sigma_{k-1}^*\big) \Big) \Big|{\F_{k-1}}\Big]du
\\
&&- \Tr\Big(D^2\varphi ( X_{k-1} + \gamma_k b_{k-1} ) \Sigma_{k-1} \Big)
\\
&=&   \Tr\Big(   D^2 \varphi  ( X_{k-1} + \gamma_k b_{k-1} )\sigma_{k-1} \underbrace{(\E\,[U_k\otimes U_k]-I)}_{=0}\sigma_{k-1}^*\Big)  
\\
&& +
t\sqrt {\gamma_k}\int_0 ^1 \E\Big[\Tr\Big( \big( [D^3 \varphi( X_{k-1} + \gamma_k b_{k-1} + u t\sqrt{\gamma_k} \sigma_{k-1} U_k )  
-D^3 \varphi( X_{k-1} + \gamma_k b_{k-1}  )]
\sigma_{k-1} U_k\big) \\
&&\times \big(\sigma_{k-1} U_k\otimes U_k \sigma_{k-1}^*\big) \Big) \Big|{\F_{k-1}}\Big]du,
\end{eqnarray*}
recalling from \A{GC} that for all $(i,j,l)\in \leftB 1,r \rightB $, $\E\,[U_k^iU_k^jU_k^l|\F_{k-1}]=\E\,[U_1^iU_1^jU_1^l]=0$ (cancellation argument).
Hence,
\begin{eqnarray*}
|\E\,[ \psi_k(X_{k-1}, U_k) | \mathcal{F}_{k-1}]|&\leq&\gamma_k\int_0^1 (1-t) t^{1+\beta} [\varphi^{(3)}]_{\beta}  \E\Big[ \gamma_k^{\frac{1 +\beta}{2}} \|\sigma_{k-1}\|^{3+\beta} |U_k| ^{3+\beta}  \int_0 ^1  u^{\beta} du \Big| \mathcal{F}_{k-1}\Big]dt 
\\
&=& \frac{ [\varphi^{(3)}]_{\beta} \gamma_k^{\frac{3 +\beta}{2}} \|\sigma_{k-1}\|^{3+\beta} \E\,[|U_k| ^{3+\beta} ] }{(1+ \beta)(2+\beta)(3+\beta)},
\end{eqnarray*}
recalling that the third derivatives of $\varphi $ are $\beta $-H\"older continuous for the first inequality.
We thus derive:
\begin{eqnarray*}
|\textcolor{black}{E_n^\beta}|=\frac{|\bar G_n|}{ \sqrt{\Gamma_n}} \le
\frac{1}{\sqrt{\Gamma_n}} \sum_{k=1}^{n-1} \big|\E\big[\psi_k(X_{k-1}, U_k)|\F_{k-1} \big]\big|
\le  \frac{ [\varphi^{(3)}]_{\beta}  \|\sigma\|_\infty^{3+\beta} \E\,[|U_1| ^{3+\beta} ] }{(1+ \beta)(2+ \beta)(3+ \beta)}\frac{\Gamma_n^{(\frac{3 +\beta}{2})}}{\sqrt{\Gamma_n}}=:a_n.
\end{eqnarray*} 



\noindent\textbf{Proof of Proposition~\ref{expV_int}}.
First of all, let us decompose the Lyapunov function $V$ with a Taylor expansion like in Lemma~\ref{decomp_nu}. We again use a splitting between the deterministic contributions and those involving the innovation.
We write for all $n \in \N$:
\begin{eqnarray}\label{TAYLOR_V}
 V(X_n)-V(X_{n-1}) &=& \gamma_n \mathcal{A} V(X_{n-1})
+\gamma_n^2 \int_0^1 (1-t) \Tr\Big(  D^2V (X_{n-1} + t \gamma_n b_{n-1} ) b_{n-1}\otimes b_{n-1}\Big)dt 
\nonumber\\
&&- \frac{\gamma_n}{2} \Tr\big( D^2V( X_{n-1})) \Sigma_{n-1} \big) +\sqrt{\gamma_n} \sigma_{n-1} U_n \cdot \nabla V( X_{n-1} +\gamma_n b_{n-1} )\nonumber\\
&&+ \gamma_n  \int_0 ^1 ( 1-t) \Tr\Big (  D^2V  ( X_{n-1} +\gamma_n b_{n-1} + t \sqrt{\gamma_n} \sigma_{n-1} U_n ) \sigma_{n-1}U_n\otimes U_n \sigma_{n-1}^*  \Big )dt\nonumber
\\ 
& \leq&
-\gamma_n \alpha_{_V} V(X_{n-1}) + \gamma_n \beta_V
+C_{_V} \frac{\gamma_n^2}{2} \| D^2V\|_{\infty} V(X_{n-1}) \nonumber
\\
&&+ \frac{\gamma_n}{2}  \|D^2V \|_{\infty}\|\sigma\|_{\infty}^2 +\sqrt{\gamma_n} \sigma_{n-1} U_n \cdot \nabla V( X_{n-1} +\gamma_n b_{n-1} ) + \frac{\gamma_n}{2}  \|D^2V  \|_{\infty} \|\sigma\|_{\infty}^2 |U_n|^{2}\nonumber
\\ 
&\leq& \gamma_n \Big(- \frac{\alpha_{_V}}{2}V(X_{n-1}) + \widetilde c\Big)  +\sqrt{\gamma_n} \sigma_{n-1} U_n \cdot \nabla V( X_{n-1} +\gamma_n b_{n-1} ) + \frac{\gamma_n}{2}  \|D^2V  \|_{\infty} \|\sigma\|_{\infty}^2 |U_n|^{2}
\end{eqnarray}
for a constant $ \widetilde c:=\widetilde c(V,\sigma,\beta_V)$. We have in fact considered the time steps sufficiently small (in \A{S}, we have chosen for all $n \in \N$, $\gamma_n < \min(\frac{1}{2\sqrt{C_{_V}\bar c}}, \frac{\alpha_{_V}}{2C_{_V} \| D^2 V \|_{\infty}}) $).
The two terms involving the innovation $U_n $ in the above decomposition can be controlled thanks to the Gaussian concentration hypothesis \A{GC}.  
Let us define for all $x \in \R^d$ and all $\gamma, \lambda >0$ the quantities:
\begin{eqnarray*}
I_1(\gamma,\lambda, x):=\E\Big [ \exp \big (\lambda \sqrt{\gamma} \sigma(x) U_1 \cdot \nabla V(x+\gamma b(x)) \big ) \Big ]
, \
I_2(\gamma,\lambda):=\E\Big [ \exp \big (\lambda \frac{\gamma}{2}   \|D^2V  \|_{\infty} \|\sigma\|_{\infty}^2 |U_1|^{2} \big ) \Big ].
\end{eqnarray*}
The first one is directly controlled owing to  hypothesis \A{GC}:
\begin{eqnarray}\label{INTEGRABILITY_V1}
I_1(\gamma_n,\lambda,x)&\leq& \exp \Big ( \frac{\lambda^2 \gamma_n  |\sigma^*(x)\nabla V(x+\gamma_n b(x))|^2}{2} \Big )
\underset{\A{{\mathcal L}_V}}{\leq} \exp \Big ( \frac{\lambda^2 \gamma_n C_{_V} \| \sigma \|_{\infty}^2 V(x+\gamma_n b(x))}{2} \Big ).
\end{eqnarray}

Furthermore, under \A{GC}, for all $c <\frac12$, $I_c:=\E\,[\exp(c |U_n|^{2})] < +\infty$. Hence, for all $\lambda < \frac{2c}{ \|D^2V  \|_{\infty} \|\sigma\|_{\infty}^2 \gamma_1}$, Jensen's inequality yields:
\begin{eqnarray}\label{INTEGRABILITY_V2}
I_2(\gamma_n,\lambda)&\leq& \Big [  \E\exp \big (c |U_n|^{2} \big ) \Big ]^{\frac{\lambda \gamma_n \|D^2V\|_{\infty} \|\sigma\|_{\infty}^2}{2 c}}=\exp \Big(\gamma_n \ln(I_{c})\frac{\lambda   \|D^2V\|_{\infty} \|\sigma\|_{\infty}^2}{2 c} \Big ).
\end{eqnarray}


These controls allow to prove the integrability statement of the proposition by induction. For $n=0$, 
recalling from assumption \A{C1} that for all $\lambda<\lambda_0, \E \exp(\lambda |X_0|^{ 2})<+\infty$ and from \textcolor{black}{\A{$\mathbf {{\mathcal L}_V}$}, i)} that $V(x) \leq \bar{c}|x|^2$ outside of a compact set, we derive that for all $\lambda\in (0, \frac{\lambda_0}{\bar{c}})$, there exists $C_{V,\lambda}^0\in (1,+\infty)$ such that $$\E\exp \big (\lambda V(X_0) \big ) \leq C_{V,\lambda}^0.$$

Set now  $\widetilde{\beta}_V := \widetilde{c}+ \ln(I_c) \frac{\|D^2V\|_{\infty} \|\sigma\|_{\infty}^2}{2c}$ and $\widetilde{\alpha}_V:=\min\big( \frac{1}{\gamma_1}, \frac{\alpha_{_V}}2 -\lambda C_{_V} \|\sigma\|_{\infty}^2(1+\gamma_1 C_{_V}[1+\frac{\gamma_1 \|D^2V\|_\infty}{2}])\big) \in (0,\frac{1}{\gamma_1}]$,
for $\lambda <\frac{\alpha_{_V}}{2C_{_V} \|\sigma \|_{\infty}^2(1+\gamma_1 C_{_V}[1+\frac{\gamma_1 \|D^2V\|_\infty}{2}])}$.

Let us assume that 
 for all $\lambda < \lambda_V:=\min\Big(\frac{\lambda_0}{2\bar{c}}, \frac{\alpha_{_V}}{2C_{_V} \|\sigma \|_{\infty}^2(1+\gamma_1 C_{_V}[1+\frac{\gamma_1 \|D^2V\|_\infty}{2}])},\frac{c}{ \|D^2V  \|_{\infty} \|\sigma\|_{\infty}^2 \gamma_1} \Big)$,  the property 
\begin{equation}
\label{HYP_REC}\tag{${\mathcal P}_{n-1}$}
\forall k\in \leftB 0,n-1\rightB,\  \E \exp \big ( \lambda V(X_{k}) \big )  \leq C_{V,\lambda}:=C_{V,\lambda}^0\vee \exp\Big( \frac{\lambda \widetilde \beta_V}{\widetilde \alpha_{_V}}\Big),
\end{equation}
holds for a fixed $n-1\in \N_0 $ and let us prove $({\mathcal P}_{n})$.
By inequalities~\eqref{TAYLOR_V},~\eqref{INTEGRABILITY_V1} and~\eqref{INTEGRABILITY_V2} and the Cauchy-Schwarz inequality,  we derive that for all $\lambda < \lambda_V$,
\begin{eqnarray*}
 \E \exp \big(\lambda V(X_n) \big)  &=& \E \Big[ \exp \big(\lambda V(X_{n-1}) \big)    \E\big[  \exp \big( \lambda (V(X_n) - V(X_{n-1})) \big) \big | \mathcal{F}_{n-1} ]  \Big ]
\\
& \leq &\E \Big[ \exp \big(\lambda [V(X_{n-1})(1-\frac{\alpha_{_V}}2\gamma_n)+\widetilde c\gamma_n] \big) I_1(\gamma_n,2\lambda,X_{n-1} 
)^{1/2}I_2(\gamma_n,2\lambda)^{1/2}\Big]\\
&=& \exp \big(\lambda \gamma_n \widetilde{\beta}_V \big )   
\E\Big[  \exp \Big( \lambda \big(1 - \frac{\alpha_{_V}}2 \gamma_n \big) V(X_{n-1})+ \lambda^2 \gamma_n C_{_V} \|\sigma\|_{\infty}^2 V(X_{n-1}+\gamma_n b_{n-1}) \Big)  \Big ].
\end{eqnarray*}
Recall now that $V(X_{n-1}+\gamma_n b_{n-1}) \le V(X_{n-1})+\gamma_n |\nabla V(X_{n-1})| |b_{n-1}|+\frac{\gamma_n^2}2 \|D^2V\|_\infty |b_{n-1}|^{ 2}\overset{\A{{\mathcal L}_{\mathbf V} }, ii)}{\le} V(X_{n-1})(1+\gamma_n C_{_V}[1+\frac{\gamma_n \|D^2V\|_\infty}{2}]) $. Thus,
\begin{eqnarray*}
 \E\big[ \exp \big(\lambda V(X_n) \big)\big]&\le & 
\exp \big(\lambda \gamma_n \widetilde{\beta}_V \big )   
\E\Big[  \exp \big( \lambda \underbrace{(1 - \gamma_n \widetilde{\alpha}_V)}_{\in [0,1)} V(X_{n-1}) \big)  \Big ]
\\
& \overset{(\rm{Jensen})}{\leq} &\exp \big(\lambda \gamma_n \widetilde{\beta}_V \big )   
\E\big[  \exp \big( \lambda V(X_{n-1}) \big)  \Big ]^{(1 - \gamma_n\widetilde{\alpha}_V)} 
\leq \exp \big(\lambda \gamma_n \widetilde{\beta}_V \big ) C_{V,\lambda}^{(1 - \gamma_n\widetilde{\alpha}_V)}
\end{eqnarray*}
using~\eqref{HYP_REC} for the last inequality. 
From the above equation and the previous definition of $C_{V,\lambda}$ we have:
\begin{eqnarray*}
\exp \big(\lambda \gamma_n \widetilde{\beta}_V \big ) C_{V,\lambda}^{(1 - \gamma_n\widetilde{\alpha}_V)}\le C_{V,\lambda} \iff C_{V,\lambda} \geq \exp\Big(\frac{\lambda \widetilde \beta_V}{\widetilde \alpha_{_V}}\Big).
\end{eqnarray*}
Hence, $({\mathcal P}_n)$ holds. Taking $c_V<\lambda_V$ completes the proof.
\hfill $\square $
\\
\begin{remark}
Noting that $v^*:= \inf_{x \in \R^d} V(x) >0$, we get that for all $(n,\xi) \in \N \times [0,1]$, and for all $\lambda < \lambda_V (v^*)^{1-\xi}$:
\begin{eqnarray*}
\E \exp ( \lambda V_n^{\xi} ) = \E \exp \Big(\lambda (v^*)^{\xi} \underbrace{\Big(\frac{V_n}{v^*}\Big)^{\xi}}_{\geq 1}  \Big) \leq \E \exp \big(\lambda (v^*)^{\xi-1} V_n  \big) \leq C_{V,\lambda (v^*)^{\xi-1}} < + \infty.
\end{eqnarray*}
Thus, we readily get as a by-product of Proposition~\ref{expV_int} that, for all $\xi\in [0,1],\lambda<\lambda_V(v^*)^{1-\xi} $ , $\sup_{n \in \N} \E \exp ( \lambda V_n^{\xi} ) < + \infty$. We refer to Lemaire (see e.g. 
Theorem 17 in \cite{lema:07}) for additional results in that direction.
\end{remark}

\noindent\textbf{Proof of Lemma~\ref{ineq_phi}.}
Recalling from~\eqref{GV} that there exists $C_{V,\varphi}>0$ such that for all $x\in \R^d,\  |\varphi(x)|\le C_{V,\varphi}\big(1+\sqrt{V(x)}\big)$, we get for $j\in \{1,2\} $:
\begin{eqnarray*}
\left[\E \exp\Big( jp \lambda_n \frac{|\varphi ( X_0) - \varphi ( X_n)| }{\Gamma_n}\Big) \right]^{\frac{1}{jp}}
\leq \left[  \E \exp\Big( jp \lambda_n  \frac{ C_{V,\varphi}(2+\sqrt{V(X_0)}+\sqrt{V(X_n)}) }{ \Gamma_n}\Big) \right]^{\frac{1}{jp}}\hskip 3 cm 
\\
\hskip 3 cm \leq \exp\Big(2C_{V,\varphi}\frac{\lambda_n}{\Gamma_n} \Big) \left[ \E \exp\Big( 2j p\, C_{V,\varphi} \lambda_n  \frac{\sqrt{V( X_0)} }{ \Gamma_n}\Big) \right]^{\frac{1}{2jp}}
\left[ \E \exp\Big( 2jp\, C_{V,\varphi}\lambda_n  \frac{ \sqrt{V(X_n)}  }{ \Gamma_n}\Big) \right]^{\frac{1}{2jp}}.
\end{eqnarray*}
Write now for $i\in \{ 0,n\} $ by the Young inequality:
\begin{eqnarray*}
2j p C_{V,\varphi} \lambda_n  \frac{ \sqrt{V( X_i)} }{ \Gamma_n} &\leq& c_V V(X_i) + \frac{(jp)^2 C_{V,\varphi}^2 \lambda_n^2}{c_V \Gamma_n^2},
\end{eqnarray*}
where $c_V$ is the positive real constant such that $I_V^1=\displaystyle \sup_{n\ge 0}\E\,[\exp(c_VV(X_n))]<+\infty $ (see Proposition~\ref{expV_int}). We then get
\begin{eqnarray*}
\left[ \E \exp\Big( jp  \lambda_n \frac{|\varphi ( X_0) - \varphi ( X_n)|}{\Gamma_n}\Big) \right]^{\frac{1}{jp}}
\!\!&\!\!\leq\!\!&\!\!
\exp\Big(2C_{V,\varphi}\frac{\lambda_n}{\Gamma_n} \Big) \exp\Big(\frac{j p C_{V,\varphi}^2 \lambda_n^2}{ c_V \Gamma_n^2}\Big)
\Big(\E \exp( c_V V( X_0)) \Big)^{\frac{1}{2jp}}
\Big(\E \exp(c_V V(X_n)) \Big)^{\frac{1}{2jp}}
\\
\!\!&\!\!\leq\!\!&\!\!
\exp\left(\frac{(j+1) p C_{V,\varphi}^2 \lambda_n^2}{ c_V \Gamma_n^2}\right)\exp\left( \frac{c_V}{p}\right) (I_V^1)^{\frac{1}{jp}}.\hskip 5cm \Box
\end{eqnarray*}

\noindent\hspace{-.2cm}\textbf{Proof of Lemma~\ref{ineq_rest}.}
\begin{trivlist}
\item[$\bullet $] \textit{Proof of inequalities~\eqref{ineq_rest_b2_phi} and~\eqref{ineq_rest_b2}.}
\begin{trivlist}
\item[-] \textit{If \textcolor{black}{$x\mapsto \langle \nabla \varphi(x),b(x)\rangle$ is Lipschitz continuous}.}
We first rewrite from the definition of $D_{2,b,n} $ in~\eqref{DECOUP}:
\begin{eqnarray*}
D_{2,b,n}&=&\sum_{k=1}^n \textcolor{black}{\gamma_k \int_0^1 \langle \nabla  \varphi (X_{k-1} + t \gamma_k b_{k-1})-\nabla \varphi(X_{k-1}),b_{k-1}\rangle dt}\\
&=&\sum_{k=1}^n \gamma_k \Big[\int_0^1 \langle \nabla  \varphi (X_{k-1} + t \gamma_k b_{k-1}),b_{k-1} -b(X_{k-1} + t \gamma_k b_{k-1})\rangle dt\\
&&+\int_0^1 \big(\langle \nabla \varphi,b\rangle(X_{k-1} + t \gamma_k b_{k-1}) -\langle\nabla \varphi ,b \rangle(X_{k-1})\big) dt \Big].
\end{eqnarray*}
From the boundedness of $\nabla \varphi $, and the Lipschitz property of the mappings $ x\mapsto b(x)$ (which has been assumed from the very beginning) and $ x\mapsto \langle \nabla \varphi(x),b(x)\rangle$ (assumed for the current inequality), recalling that $b_{k-1}=b(X_{k-1}) $, one derives that :
\begin{eqnarray}
\label{NEW_BD_D2BN}|D_{2,b,n}|&\le &  \sum_{k=1}^n \gamma_k^2 \Big(\|\nabla \varphi\|_\infty [b]_1 +[\langle \nabla \varphi, b\rangle]_1 \Big) \frac{|b_{k-1}|}2\le C \sum_{k=1}^n \gamma_k^2 |b_{k-1}|,\ C:=C(b,\varphi).
\end{eqnarray}

From this inequality, assumption \A{${\mathbf{ {\mathcal L}_V}} $}, ii) and the Jensen inequality (applied to the exponential function for the measure $\frac{1}{\Gamma_n^{(2)}} \sum_{k=1}^{n} \gamma_k^2 \delta_{k} $), we derive:
\begin{eqnarray*}
\left(\E\exp\Big( \frac{4 p \lambda_n}{ \Gamma_n}|D_{2,b,n}|\Big)\right)^{\frac 1{4p}} 
\!\!&\!\!\leq\!\!&\!\! \left( \frac{1}{\Gamma_n^{(2)}} \sum_{k=1}^n  \gamma_k^{2} \E \exp\Big(\frac{4p  \lambda_n \Gamma_n^{(2)} }{\Gamma_n}  C\sqrt{C_{_V}}\sqrt{V_{k-1}} \Big) \right)^{\frac{1}{4p}}.
\end{eqnarray*}
From the Young inequality we obtain:
\begin{eqnarray*}
\E\exp\Big(\frac{4p  \lambda_n\Gamma_n^{(2)} }{ \Gamma_n} C\sqrt{C_{_V}}  \sqrt{V_{k-1}} \Big) 
\leq \exp\Big(\Big(\frac{2 \sqrt 2p  \lambda_n \Gamma_n^{(2)} }{ \Gamma_n}  \frac{C \sqrt{C_{_V}}}{\sqrt{c_V}} \Big)^2 \Big) 
 \E\,[   \exp(c_V V_{k-1} ) ].
\end{eqnarray*}
 We finally derive with the notations of Proposition~\ref{expV_int}:
\begin{eqnarray*}
\left( \E\exp\Big( \frac{4 p \lambda_n}{ \Gamma_n}|D_{2,b,n}|\Big)\right)^{\frac 1{4p}}&\le& 
\exp\Big(\frac{2p  \lambda_n^2 (\Gamma_n^{(2)})^2 }{ \Gamma_n^2}  \frac{(C \sqrt{C_{_V}})^2}{c_V}  \Big)
 (I_V^1)^{\frac 1{4p}}
\le 
\exp\Big(C_{\ref{ineq_rest_b2_phi}} \frac{p\lambda_n^2 (\Gamma_n^{(2)})^2}{\Gamma_n^2}
\Big)(I_V^1)^{\frac 1{4p}},
\end{eqnarray*}
setting $C_{\ref{ineq_rest_b2_phi}}:=  2\frac{(C \sqrt{C_{_V}})^2}{c_V}$ with $C=\frac 12 \big(\|\nabla \varphi\|_\infty [b]_1 +[\langle \nabla \varphi, b\rangle]_1 \big) $ as in~\eqref{NEW_BD_D2BN}. 

\hfill \textcolor{black}{$\qquad\Box$}
\smallskip
\end{trivlist}

\item[-] \textit{If $a\le \frac{c_V  q}{4C_{_V}p}\frac{\|\sigma\|_\infty^2\|\nabla \varphi\|_\infty^2}{\|D^2 \varphi\|_\infty^2} \frac{\sqrt{\Gamma_n}}{\Gamma_n^{(2)}}=\chi_n \frac{\sqrt{\Gamma_n}}{\Gamma_n^{(2)}}$ with the notation introduced in~\eqref{DEF_CHI_N}}. 
\textcolor{black}{Write first from~\eqref{DECOUP} (definition of $ D_{2,b,n}$), using a Taylor expansion on $\nabla \varphi $:}
\begin{eqnarray}
\left( \E\exp\Big(\frac{4p\lambda_n}{\Gamma_n} |D_{2,b,n}|\Big)\right)^{\frac 1{4p}}
\le 
\left(\E \exp\Big( \frac{4 p \lambda_n}{ \Gamma_n} \sum_{k=1}^n  \gamma_k^2 \int_0^1 (1-t) \Big|\Tr\Big(   
D^2 \varphi (X_{k-1} + t \gamma_k b_{k-1} ) 
b_{k-1}\otimes b_{k-1}\Big)\Big|dt\Big) \right)^{\frac 1{4p}}.\notag\\
\label{PRELIM_LEMME_5}
\end{eqnarray}

We first easily get from the assumptions on $\varphi$ and point ii) of \A{$\mathbf{{\mathcal L}}_V $} that:
\begin{eqnarray*}
\left( \E\exp\Big(\frac{4p\lambda_n}{\Gamma_n} |D_{2,b,n}|\Big)\right)^{\frac 1{4p}}\leq  \left(\E \exp\Big(\frac{2p \lambda_n}{ \Gamma_n} \sum_{k=1}^n  \gamma_k^2 C_{_V} V_{k-1} \| D^2\varphi \|_{\infty} \Big) \right)^{\frac{1}{4p}}.
\end{eqnarray*}
 From the Jensen inequality,
 we derive:
 \begin{eqnarray*}
 \left(\E\exp\Big(\frac{4p\lambda_n}{\Gamma_n} |D_{2,b,n}|\Big)\right)^{\frac1{4p}}
&\leq & \left(\frac{1}{\Gamma_n^{(2)}} \sum_{k=1}^n  \gamma_k^{2} \E \exp\Big(\frac{2p \lambda_n \Gamma_n^{(2)}}{\Gamma_n}  \| D^2\varphi \|_{\infty} C_{_V}   
V_{k-1}  \Big) \right)^{\frac{1}{4p}}.
\end{eqnarray*}

 We then have \textcolor{black}{from the definition o\textcolor{black}{f} $\lambda_n $ in~\eqref{DEF_LAMBDA}} that:
 $$\bar v_n:=\frac{2p \lambda_n \Gamma_n^{(2)}}{\Gamma_n}  \| D^2\varphi \|_{\infty} \frac{C_{_V}}{c_V}=\frac{\Gamma_n^{(2)}}{\sqrt{\Gamma_n}}\frac{2 C_{_V} p}{      c_V  q} \frac{\| D^2\varphi \|_{\infty}}{\|\sigma\|_\infty^2\|\nabla \varphi\|  _{\infty}^2}  \, a  \le 1.$$ 
The Jensen inequality for concave functions yields for all $k\in \leftB 1,n\rightB $:
 \begin{eqnarray*}
 \E \exp\Big(\frac{2p \lambda_n \Gamma_n^{(2)}}{\Gamma_n}  \| D^2\varphi \|_{\infty} C_{_V}   
V_{k-1}  \Big)=\E\exp\Big(\bar v_n c_V V_{k-1}\Big)\le \left(\E\exp\Big( c_VV_{k-1}\Big)\right)^{\bar v_n}. 
 \end{eqnarray*}
Thus,  setting 
 \begin{equation}
 \label{DEF_VN}
 v_n:=\frac{ \bar v_n}{4p}=\frac{ \lambda_n \Gamma_n^{(2)}}{\textcolor{black}{2}\Gamma_n}  \| D^2\varphi \|_{\infty} \frac{C_{_V}}{c_V},
 \end{equation}
 we finally derive,
\begin{eqnarray*}
\left[\E\exp\Big( \frac{4 p \lambda_n}{ \Gamma_n} |D_{2,b,n}| \Big)\right]^{\frac 1{4p}}
\leq  \left[ \frac{1}{\Gamma_n^{(2)}} \sum_{k=1}^n  \gamma_k^{2} \Big(\sup_{l \geq 1} \E\,\big[ \exp(c_V  V_{l-1}  )\big ]\Big)^{\bar v_n}\right]^{\frac 1{4p}}=(I_{V}^1)^{v_n}=: C_n,
\end{eqnarray*}
using again the notations of Proposition~\ref{expV_int}. This gives~\eqref{ineq_rest_b2}.

\item[$\bullet$ ]\textit{Proof of inequality~\eqref{ineq_rest_b}.}
We proceed as for the proof of~\eqref{ineq_rest_b2} and~\eqref{ineq_rest_b2_phi}. Write:
\begin{eqnarray*}
&&\left( \E\exp\Big(\frac{4p\lambda_n}{\Gamma_n}|D_{2,\Sigma,n}|\Big)\right)^{\frac 1{4p}} \\
&&\hskip 2cm \le \left( \E  \exp\Big( \frac{4p \lambda_n}{ \Gamma_n}\sum_{k=1}^n \frac{\gamma_k}2  \Big|\Tr\Big(  \big(D^2\varphi( X_{k-1}+  \gamma_k b_{k-1} ) - D^2\varphi( X_{k-1}) \big)\Sigma_{k-1}\Big)\Big| \Big) \right)^{\frac{1}{4p}}\\
&&\hskip 2cm \leq  \left( \E  \exp\Big( \frac{2p \lambda_n}{ \Gamma_n}  \|\sigma\|_\infty^2 [\varphi^{(2)}]_1\sum_{k=1}^n \gamma_k^2    |b_{k-1}|  \Big) \right)^{\frac{1}{4p}} 
\leq   \left( \E \exp\Big( \frac{2p \lambda_n}{\Gamma_n} \|\sigma\|_\infty^2  [\varphi^{(2)}]_1 C_{_V}^{\frac 12}  \sum_{k=1}^n \gamma_k^2 |V_{k-1}|^{\frac 12} \Big) \right)^{\frac{1}{4p}}
\\
&& \hskip 2cm\leq \left( \frac{1}{\Gamma_n^{(2)}} \sum_{k=1}^n \gamma_k^{2}  \E \exp\Big( \frac{2p\lambda_n  \Gamma_n^{(2)}}{ \Gamma_n}  \|\sigma\|_\infty^2  [\varphi^{(2)}]_1 C_{_V}^{\frac 12}|V_{k-1}|^{\frac 12} \Big) \right)^{\frac{1}{4p}}.
\end{eqnarray*}
Using once again the Young inequality and setting $C_{\ref{ineq_rest_b}}:=\frac{\|\sigma\|_\infty^4[\varphi^{(2)}]_1^2}{4}\frac{C_{_V}}{c_V} $, we obtain:
\begin{eqnarray*}
\left( \E\exp\Big(\frac{4p\lambda_n 
}{\Gamma_n}|D_{2,\Sigma,n}|\Big)\right)^{\frac{1}{4p}}
 \leq 
 \exp\Big(\frac{p\lambda_n^2}{4}\Big(\frac{\Gamma_n^{(2)}}{\Gamma_n}\Big)^2\|\sigma\|_\infty^4[\varphi^{(2)}]_1^2\frac{C_{_V}}{c_V}\Big)(I_V^1)^{\frac1{4p}}
 \le 
 \exp\Big(C_{\ref{ineq_rest_b}}p\lambda_n^2\Big(\frac{\Gamma_n^{(2)}}{\Gamma_n}\Big)^2\Big)(I_V^1)^{\frac1{4p}}.\qquad\Box
\end{eqnarray*}
\end{trivlist}

\mysection{A refinement when $\vvvert \sigma\vvvert^2-\nu(\vvvert\sigma\vvvert^2) $ is a  Coboundary}
\label{SEC_COBORD}

We will assume in this section that there exists a solution $\vartheta $ of the \textcolor{black}{Poisson problem 
${\mathcal A}\vartheta=\vvvert\sigma\vvvert^2-\nu(\vvvert\sigma\vvvert^2) $, where $\vvvert\cdot\vvvert $ is a matrix norm such that $\|\cdot\|\le \vvvert \cdot\vvvert $}, satisfying the \textcolor{black}{assumptions stated for $\varphi $ in Theorem~\ref{ineq_fin}. This is in particular the case for the Fr\"obenius norm $\|\cdot\|_F $ under the assumptions of the previous Theorem~\ref{Poisson_regular}.} 

In this special case, we have a slightly different concentration result improving our previous ones for a certain deviation range.
\begin{theo}\label{THM_COBORD}
Under the assumptions of Theorem~\ref{ineq_fin} and with the notations introduced therein, we have that:
\begin{trivlist}
\item[(a)] For 
($\beta\in (0,1] $ and $ \theta \in (\frac{1}{2+\beta}, 1 ]$), there exist two explicit monotonic sequences 
$\tilde c_n\le 1\le \tilde C_n,\ n\ge 1$,   with $\lim_n \tilde C_n = \lim_n \tilde c_n =1$
such that for all $n\ge 1$ for all $a>0$:
 \begin{eqnarray*}
\P\big[ |\sqrt{\Gamma_n}\nu_n( \mathcal{A} \varphi )| \geq a \big] 
   &\leq& 2\, \tilde C_n \exp\left(\! - \frac{\tilde c_n}{2\nu(\vvvert\sigma\vvvert^2)\|\nabla \varphi\|_\infty^2}\, \Phi_n(a)
\right),\\
\Phi_n(a)&:=&\left[ \!\Bigg(\! a^2\Big( 1-\frac{2}{1+\sqrt{1+4\,\bar c_n^3\,\frac{\Gamma_n}{a^2}}}\Big)  \Bigg)\! \vee \! \Bigg(\!a^{\frac 43}\Gamma_n^{\frac{1}{3}}\bar c_n\left(1-\frac 23\bar c_n\left(\frac{\Gamma_n }{a^{2}} \right)^{\frac 13} \right)_+ \Bigg) \right],
\end{eqnarray*}
where $x_+=\max(x,0)$ and  \textcolor{black}{$ \bar c_n:=\left(\frac{[\varphi]_1}{[\vartheta]_1} \right)^{2/3}\nu(\vvvert\sigma\vvvert^2)\vvvert\sigma\vvvert_\infty^{-2/3}\check c_n$ with $\check c_n $ being an explicit \textcolor{black}{positive} sequence s.t. $\check c_n \downarrow_n 1 $}.

\smallskip
\item[(b)] For $\beta\in (0,1], \theta = \frac{1}{2+\beta}$,
 there exist two explicit monotonic sequences $\tilde c_n\le 1\le \tilde C_n,\ n\ge 1$,   with $\lim_n \tilde C_n = \lim_n \tilde c_n =1$ such that for all $n\ge 1$ for all $a>0$:
 \begin{eqnarray*}
\P\big[ |\sqrt{\Gamma_n}\nu_n( \mathcal{A} \varphi )+{\mathcal B}_{n,\beta}  | \geq a\big]
   \leq 2\, \widetilde C_n \exp\left(\! - \frac{\widetilde c_n}{2\nu(\vvvert\sigma\vvvert^2)\|\nabla \varphi\|_\infty^2}\! \Phi_n(a)
\right). 
\end{eqnarray*}
\end{trivlist}

\end{theo}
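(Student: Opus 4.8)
The plan is to mimic the proof of Theorem~\ref{ineq_fin} but replace the blunt martingale bound \eqref{BLUNT_CTR} by a sharper one that uses the coboundary structure of $\|\sigma\|^2-\nu(\|\sigma\|^2)$. Recall that in \eqref{BLUNT_CTR} each conditional increment contributed a factor $\exp(\frac{\lambda^2 q^2}{2\Gamma_n^2}\gamma_n\|\sigma\|_\infty^2\|\nabla\varphi\|_\infty^2)$, because the Lipschitz constant of $u\mapsto\Delta_k(X_{k-1},u)$ is $\sqrt{\gamma_k}\|\sigma_{k-1}\|\,\|\nabla\varphi\|_\infty$ and we bounded $\|\sigma_{k-1}\|^2\le\|\sigma\|_\infty^2$. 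Keeping instead the exact local quantity $\|\sigma_{k-1}\|^2=\|\sigma(X_{k-1})\|^2$, the iterated conditioning yields
\begin{equation*}
\Big(\E\exp\big(-\tfrac{q\lambda}{\Gamma_n}M_n\big)\Big)^{1/q}\le \E\exp\Big(\tfrac{q\lambda^2}{2\Gamma_n^2}\|\nabla\varphi\|_\infty^2\sum_{k=1}^n\gamma_k\|\sigma_{k-1}\|^2\Big)^{1/q},
\end{equation*}
so that, with $\nu_n(\|\sigma\|^2)=\frac1{\Gamma_n}\sum_{k=1}^n\gamma_k\|\sigma_{k-1}\|^2$, we are reduced to controlling the exponential moment of $\Gamma_n\nu_n(\|\sigma\|^2)$. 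Writing $\|\sigma\|^2=\nu(\|\sigma\|^2)+{\mathcal A}\vartheta$ gives $\Gamma_n\nu_n(\|\sigma\|^2)=\Gamma_n\nu(\|\sigma\|^2)+\Gamma_n\nu_n({\mathcal A}\vartheta)$, and Theorem~\ref{ineq_fin} applied to $\vartheta$ (which by hypothesis satisfies the assumptions there) provides precisely a Gaussian control on $\sqrt{\Gamma_n}\nu_n({\mathcal A}\vartheta)$, hence on its exponential moments.

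The core computation is therefore: for a suitable $\mu>0$ (here $\mu=\frac{q\lambda^2}{2\Gamma_n}\|\nabla\varphi\|_\infty^2$, up to the $1/q$ power), bound $\E\exp(\mu\,\Gamma_n\nu_n(\|\sigma\|^2))\le \exp(\mu\Gamma_n\nu(\|\sigma\|^2))\,\E\exp(\mu\sqrt{\Gamma_n}\cdot\sqrt{\Gamma_n}\nu_n({\mathcal A}\vartheta))$. For the last factor one integrates the tail estimate of Theorem~\ref{ineq_fin}(a)(i) for $\vartheta$: since $\P[|\sqrt{\Gamma_n}\nu_n({\mathcal A}\vartheta)|\ge a']\le 2C_n\exp(-c_n a'^2/(2\|\sigma\|_\infty^2[\vartheta]_1^2))$, the formula $\E e^{\eta Z}=\eta\int_\R e^{\eta a'}\P[Z\ge a']\,da'$ (or just $\E e^{\eta|Z|}$) gives an essentially Gaussian bound $\E\exp(\eta\sqrt{\Gamma_n}\nu_n({\mathcal A}\vartheta))\le \mathscr R_n\exp(\eta^2\|\sigma\|_\infty^2[\vartheta]_1^2/(2c_n))$ for $\eta$ below a threshold. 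Substituting $\eta=\mu\sqrt{\Gamma_n}$ introduces a term of order $\mu^2\Gamma_n$, i.e. of order $\lambda^4\Gamma_n$, into the exponent. Then the exponential Chebyshev step becomes
\begin{equation*}
\P[\sqrt{\Gamma_n}\nu_n({\mathcal A}\varphi)\ge a]\le \mathscr R_n\exp\Big(-\tfrac{a\lambda}{\sqrt{\Gamma_n}}+\tfrac{\lambda^2 q}{2}\nu(\|\sigma\|^2)\|\nabla\varphi\|_\infty^2+C\lambda^4\tfrac{\|\nabla\varphi\|_\infty^4\|\sigma\|_\infty^2[\vartheta]_1^2}{\Gamma_n}\cdot(\text{const})\Big),
\end{equation*}
plus the remainder contributions $L_n,\bar G_n,D_{2,b,n},D_{2,\Sigma,n}$ handled exactly as in Theorem~\ref{ineq_fin} (they only perturb the constants $c_n,C_n$). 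Optimizing a quartic-in-$\lambda$ function of the form $-A\lambda+B\lambda^2+D\lambda^4$ is what produces the two regimes in $\Phi_n(a)$: for $a$ not too large the cubic correction is negligible and one recovers the Gaussian rate $a^2/(2\nu(\|\sigma\|^2)\|\nabla\varphi\|_\infty^2)$ damped by the factor $1-\frac{2}{1+\sqrt{1+4\bar c_n^3\Gamma_n/a^2}}$; for larger $a$ the quartic term dominates and yields the $a^{4/3}\Gamma_n^{\bar c_n/3}$ regime, the constant $\bar c_n\sim(\,[\varphi]_1/[\vartheta]_1)^{2/3}\nu(\|\sigma\|^2)\|\sigma\|_\infty^{-2/3}$ being exactly the combination of $B$ and $D$ that appears after minimization.

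Concretely the steps I would carry out are: (1) redo the martingale estimate keeping $\|\sigma_{k-1}\|^2$ and reduce to $\E\exp(\mu\Gamma_n\nu_n(\|\sigma\|^2))$; (2) insert $\|\sigma\|^2=\nu(\|\sigma\|^2)+{\mathcal A}\vartheta$ and invoke Theorem~\ref{ineq_fin} (case (a) or (b) according to $\theta$) for the auxiliary function $\vartheta$ to get a sub-Gaussian control of $\sqrt{\Gamma_n}\nu_n({\mathcal A}\vartheta)$, valid on the relevant range of $\mu\sqrt{\Gamma_n}$; (3) assemble the exponential-Chebyshev inequality with the quartic exponent, absorbing the $L_n,\bar G_n,D_2$ remainders into sequences $\tilde c_n\to1,\tilde C_n\to1$ just as before; (4) minimize the quartic over $\lambda>0$, distinguishing the two regimes to obtain $\Phi_n(a)$; (5) symmetrize for the two-sided bound and, in case (b), note that $\mathcal B_{n,\beta}$ is subtracted off before the Chebyshev step exactly as in \eqref{DECOUP_TCHEB_BIAIS}--\eqref{DECOUP_TCHEB_BIAS_BETA_NEQ_1TIERS}, so the argument is identical. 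The main obstacle I anticipate is step (2)--(3): one must be careful that $\mu\sqrt{\Gamma_n}$ stays inside the deviation range where the estimate for $\vartheta$ is Gaussian (this is where the truncation $x_+$ and the range restrictions in $\Phi_n$ come from), and one must track the interplay between the $\|\nabla\varphi\|_\infty$, $[\vartheta]_1$, $\|\sigma\|_\infty$ and $\nu(\|\sigma\|^2)$ factors so that the leading Gaussian constant is genuinely $\nu(\|\sigma\|^2)\|\nabla\varphi\|_\infty^2$ and not its cruder upper bound; the quartic optimization itself is then elementary but bookkeeping-heavy.
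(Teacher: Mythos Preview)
Your overall strategy—replace $\|\sigma\|_\infty^2$ by $\nu(\|\sigma\|^2)$ via the coboundary identity and pay a quartic-in-$\lambda$ price—is the right one, and it is the paper's idea too. But your step~(1) contains a genuine error. The displayed inequality
\[
\Big(\E\exp\big(-\tfrac{q\lambda}{\Gamma_n}M_n\big)\Big)^{1/q}\le \Big(\E\exp\Big(\tfrac{q\lambda^2}{2\Gamma_n^2}\|\nabla\varphi\|_\infty^2\sum_{k=1}^n\gamma_k\|\sigma_{k-1}\|^2\Big)\Big)^{1/q}
\]
does \emph{not} follow from iterated conditioning. What iterated conditioning gives you is that
\(
S_m:=\exp\big(-\tfrac{q\lambda}{\Gamma_n}M_m-\tfrac{q^2\lambda^2}{2\Gamma_n^2}\|\nabla\varphi\|_\infty^2\sum_{k\le m}\gamma_k\|\sigma_{k-1}\|^2\big)
\)
is a supermartingale, hence $\E[S_n]\le1$. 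But $\E\exp(-\tfrac{q\lambda}{\Gamma_n}M_n)=\E[S_n\cdot B_n]$ with $B_n=\exp(\tfrac{q^2\lambda^2}{2\Gamma_n^2}\|\nabla\varphi\|_\infty^2\sum_k\gamma_k\|\sigma_{k-1}\|^2)$, and there is no reason for $S_n$ and $B_n$ to be negatively correlated (indeed one can build simple two-step counterexamples where they are positively correlated and your inequality fails).

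The paper repairs this by inserting a H\"older split \emph{before} forming the supermartingale: for any $\rho>1$,
\[
\E\exp\big(-\tfrac{q\lambda}{\Gamma_n}M_n\big)\le
\Big(\E\exp\big(-\tfrac{\rho q\lambda}{\Gamma_n}M_n-\tfrac{\rho^2(q\lambda)^2[\varphi]_1^2}{2\Gamma_n^2}\sum_k\gamma_k{\mathcal A}\vartheta(X_{k-1})\big)\Big)^{1/\rho}
\Big(\E\exp\big(\tfrac{\rho^2(q\lambda)^2[\varphi]_1^2}{2(\rho-1)\Gamma_n^2}\sum_k\gamma_k{\mathcal A}\vartheta(X_{k-1})\big)\Big)^{1-1/\rho}.
\]
Using ${\mathcal A}\vartheta=\|\sigma\|^2-\nu(\|\sigma\|^2)$ in the first factor makes it \emph{exactly} $\exp\big(\tfrac{\rho^2(q\lambda)^2[\varphi]_1^2\nu(\|\sigma\|^2)}{2\Gamma_n}\big)$ times the expectation of a genuine supermartingale (the $\rho$ in front of $M_n$ now matches the $\rho^2$ in the compensator), so it is bounded by $\exp(\tfrac{\rho(q\lambda)^2[\varphi]_1^2\nu(\|\sigma\|^2)}{2\Gamma_n})$ after the $1/\rho$ power. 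The second factor is handled not by integrating a tail bound for $\vartheta$ as you propose, but by rerunning the decomposition of Lemma~\ref{decomp_nu} directly on $\vartheta$ with the parameter $\mu=\tfrac{\rho^2(q\lambda)^2[\varphi]_1^2}{2(\rho-1)\Gamma_n}$; this is cleaner and avoids the loss from tail integration. The crucial point you are missing is that the extra parameter $\rho$ is not cosmetic: it appears in the final polynomial $P(\lambda)=-\tfrac{a\lambda}{\sqrt{\Gamma_n}}+\tfrac{\lambda^2}{\Gamma_n}\rho\widetilde A_n+\tfrac{\lambda^4}{\Gamma_n^3}\tfrac{\rho^3}{\rho-1}\widetilde B_n$, and the two regimes in $\Phi_n(a)$ arise from optimizing over \emph{both} $\lambda$ (via Cardan--Tartaglia) and $\rho$. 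Without the $\rho$-splitting, you cannot even write down a valid quartic to minimize.
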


\begin{remark}[About deviation rates]
Observe that in order to derive \textbf{global} deviation bounds (valid for every $a>0$) two concentration regimes appear in the previous bounds.
For an arbitrary fixed $a>0$, we have that for $n$ large enough (depending on $a$), the Gaussian concentration regime will give the fastest decay, since $\frac{2}{1+\sqrt{1+4\bar c_n^3\frac{\Gamma_n}{a^2}}}\underset{n}{\rightarrow} 0 $. Also, when $a \asymp \sqrt{\Gamma_n} $ the two above contributions give a Gaussian bound, with suboptimal constants. Eventually, when $a\gg\sqrt{\Gamma_n} $, for a fixed $n$, we have that the first term is ``stuck" at the threshold $\Gamma_n $ whatever level $a$ is considered, {\it i.e.} 
$a^2\big(1- \frac{2}{1+\sqrt{1+4\bar c_n^3\frac{\Gamma_n}{a^2}}}\big)\underset{a\rightarrow \infty}{\longrightarrow} \bar c_n^3 \Gamma_n$ whereas the second clearly becomes bigger. 

To summarize, when the Gaussian regime prevails (i.e. when $\frac{a}{\sqrt{\Gamma_n}}$ is small), the results of Theorem~\ref{ineq_fin} have been improved in the sense that the variance in the deviations is a \textit{sharper} upper bound of the ``\textit{carr\'e du champ}" $\int_{\R^d} |\sigma^*\nabla \varphi(x)|^2\nu(dx) $ appearing in the asymptotic Theorem~\ref{theo}. Indeed, we managed to replace the supremum norm $\vvvert \sigma\vvvert_\infty^2 $ \textcolor{black}{deriving from Theorem~\ref{ineq_fin} and the domination condition on the matrix norms} by $ \nu(\vvvert \sigma\vvvert^2)$. However, our martingale approach naturally leads to a bound in $\|\nabla \varphi\|_\infty^2 $.

On the other hand, the global double regime seems to be the price to pay to benefit from the better approximation of the ``\textit{carr\'e du champ}" in the Gaussian regime.

\textcolor{black}{Eventually, Theorem~\ref{THM_COBORD_SHORT} is a direct consequence of the previous theorem in the Gaussian regime.}
\end{remark}


\begin{proof} We focus on case \textit{(a)} for $\beta\in (0,1) $, $\theta\in (1/(2+\beta),1] $. 
 Case \textit{(b)} could be derived similarly following the proof of Theorem~\ref{ineq_fin}.
We restart from the computations of Section~\ref{SEC_STRAT} that give for all $\lambda>0 $ the control in equation~\eqref{DECOUP_TCHEB}. Let us now deal with  the term giving the concentration and write for all $ \rho>1$:
\begin{eqnarray}
\E \exp \Big (-\frac{q\lambda}{\Gamma_n}M_n\Big)
&\le& 
\left( \E\exp \Big (- \rho \frac{q\lambda}{\Gamma_n}M_n-\frac{\rho^2 (q\lambda)^2[\varphi]_1^2}{2\Gamma_n^2}\sum_{k=1}^n \gamma_k{\mathcal A}\vartheta(X_{k-1}) \Big)\right)^{\frac 1\rho}\nonumber\\
&&\times \left( \E\exp \Big (
\frac{\rho^2 (q\lambda)^2[\varphi]_1^2}{2(\rho-1)\Gamma_n^2}\sum_{k=1}^n \gamma_k{\mathcal A}\vartheta(X_{k-1}) \Big)\right)^{1-\frac 1\rho}
=:{\mathscr T}_1^{\frac 1\rho} {\mathscr T}_2^{1-\frac 1\rho}.\label{CTR_COBORD}
\end{eqnarray}
Since for all $x\in \R^d $, ${\mathcal A}\vartheta(x)=\vvvert\sigma(x)\vvvert^2-\nu(\vvvert \sigma\vvvert^2) $, we obtain:
\begin{eqnarray*}
{\mathscr T}_1&=&\exp\Big(\frac{\rho^2 (q\lambda)^2[\varphi]_1^2\nu(\vvvert\sigma\vvvert^2)}{2\Gamma_n} \Big)\E\exp \Big ( \textcolor{black}{-}\rho \frac{q\lambda}{\Gamma_n}M_n-\frac{\rho^2 (q\lambda)^2[\varphi]_1^2}{2\Gamma_n^2}\sum_{k=1}^n \gamma_k\vvvert \sigma(X_{k-1})\vvvert^2 \Big).
\end{eqnarray*}
\textcolor{black}{The key idea is that we have exploited the Poisson equation  solved by $\vartheta $ to replace the previous \textit{rough} control $\exp\Big(\frac{ (q\lambda)^2[\varphi]_1^2\vvvert \sigma\vvvert_\infty^2}{2\Gamma_n} \Big) $, coming from the martingale increment obtained in equation~\eqref{BLUNT_CTR} \textcolor{black}{and the domination condition on the matrix norms}, by the above  term $\exp\Big(\frac{\rho^2 (q\lambda)^2[\varphi]_1^2\nu(\vvvert \sigma\vvvert^2)}{2\Gamma_n} \Big)$. This last contribution will be part of  the optimization procedure over $\lambda $. This improvement will be all the more  significant that neighborhoods of the points where  the norm of the diffusion coefficient $\sigma$ attains its supremum are not \textit{very much} charged by the invariant distribution. The point for ${\mathscr T}_1$ is then to prove that the remaining expectation is less than $1$. It will be  shown by exhibiting an appropriate underlying supermartingale.}

Set to this end ${\widetilde {\mathscr T}}_1:=\exp\Big(-\frac{\rho^2 (q\lambda)^2[\varphi]_1^2\nu(\vvvert\sigma\vvvert^2)}{2\Gamma_n} \Big) {\mathscr T}_1 $. 
Define now, for a given $n\in \N$ and $m\in \N_0,\ S_m:=\exp \Big ( \textcolor{black}{-}\rho \frac{q\lambda}{\Gamma_n}M_m-\frac{\rho^2 (q\lambda)^2[\varphi]_1^2}{2\Gamma_n^2}\sum_{k=1}^m \gamma_k\vvvert\sigma(X_{k-1})\vvvert^2 \Big)$. From the definition of the martingale $(M_k)_{k\ge 1}$ in~\eqref{INTRO_MART} and the controls of the Lipschitz constants of the functions $\big(\psi_{k}(X_{k-1},\cdot)\big)_{k\in \leftB 1,n\rightB} $ in Lemma~\ref{decomp_nu}, we 
get by iterated conditioning:
\begin{eqnarray*}
\widetilde {\mathscr T}_1
&\le& 
\E\Big[S_{n-1}\exp \Big (-\frac{\rho^2 (q\lambda)^2[\varphi]_1^2}{2\Gamma_n^2} \gamma_n\vvvert\sigma(X_{n-1})\vvvert^2\Big)  
\E\Big[\exp \Big (\textcolor{black}{-} \rho \frac{q\lambda}{\Gamma_n}(M_n-M_{n-1})\Big) \Big|\F_{n-1}\Big] \Big]\\
&\underset{\A{GC}}{\le}&
\E\Big[S_{n-1}\exp \Big ( -\frac{\rho^2 (q\lambda)^2[\varphi]_1^2}{2\Gamma_n^2} \gamma_n\vvvert\sigma(X_{n-1})\vvvert^2\Big) 
\exp(\frac{\rho^2 (q\lambda)^2}{2\Gamma_n^2}\gamma_n[\varphi]_1^2\vvvert\sigma(X_{n-1})\vvvert^2)\Big]
\le 
\E\,[S_{n-1}]
\le 1.
\end{eqnarray*}
In other words, 
$(S_m)_{m\ge 0} $ is a positive supermartingale. We finally get that, for all $\rho>1 $:
\begin{equation}
\label{CTR_T1}
{\mathscr T}_1^{\frac1\rho}\le \exp\Big(\frac{\rho (q\lambda)^2[\varphi]_1^2\nu(\vvvert\sigma\vvvert^2)}{2\Gamma_n} \Big). 
\end{equation}
For the term ${\mathscr T}_2$, we have that setting $\mu:=\mu(q,n,\rho,\lambda)=\frac{(q\lambda)^2\rho^2[\varphi]_1^2}{2(\rho-1)\Gamma_n} $,
\begin{eqnarray*}
{\mathscr T}_2=\E\exp\Big( \frac{\mu}{\Gamma_n}\sum_{k=1}^n \gamma_k {\mathcal A}\vartheta(X_{k-1}) \Big),
\end{eqnarray*}
so that this contribution can be controlled from the previous expansion of Lemma~\ref{decomp_nu} exploiting the technical lemmas of Section~\ref{SEC_STRAT} replacing $\lambda  $ by  $\mu$ and $\varphi $ by $\vartheta $. 

In case (\textit{a}), for $\theta\in (1/(2+\beta),1] $, $\beta\in (0,1] $, the H\"older inequalities yield that  for all $\mu \in \R_ +$ and all $\bar p,\bar q \in (1 , + \infty ) $, $ \frac{1}{\bar p} + \frac{1}{\bar q} =1$, similarly to~\eqref{DECOUP_TCHEB}, 
\begin{eqnarray}
{\mathscr T}_2 = \E \exp\Big( \frac{\mu}{\Gamma_n}\sum_{k=1}^n \gamma_k {\mathcal A}\vartheta(X_{k-1})\Big)
\le
\left( \E \exp\Big( \textcolor{black}{-}\frac{\bar q \mu}{ \Gamma_n}M_n^{\vartheta}\Big)\right)^{\frac1{\bar q}}\nonumber\\
\times \left(\E \exp\Big( \frac{2\bar p \mu}{ \Gamma_n}|L_n^\vartheta|\Big)\right)^{\frac 1{2p}}\left(\E\exp\Big( \frac{4\bar p \mu}{ \Gamma_n}|D_{2,b,n}^\vartheta|\Big)\right)^{\frac 1{4\bar p}}\left(\E\exp\Big( \frac{4\bar p \mu}{ \Gamma_n}|D_{2,\Sigma,n}^\vartheta|\Big)\right)^{\frac1{4\bar p}}, \label{DECOUP_TCHEB_COB}
\end{eqnarray}
where the superscripts in $\vartheta $ emphasize that the contributions to be analyzed are those associated with  the solution $\vartheta $ of 
the Poisson problem with source $\vvvert\sigma\vvvert^2-\nu(\vvvert\sigma\vvvert^2) $.

Still for simplicity, we assume as well  (case \textit{(i)}) that \textcolor{black}{the mapping $x\mapsto \langle b(x), \nabla \vartheta(x) \rangle $ is Lipschitz continuous}. 
 Plugging in~\eqref{DECOUP_TCHEB_COB} the controls established in   Lemma~\ref{ineq_phi} (with $j=2$), Lemma~\ref{ineq_rest} (equations~\eqref{ineq_rest_b}  and~\eqref{ineq_rest_b2_phi}) and~\eqref{CTR_MART}, then replacing $\lambda_n $ by $\mu $,  \textcolor{black}{we get similarly to the first inequality of~\eqref{BD_P_THM_2} and with the notations of Lemma~\ref{CTR_BIAS_1}}:
 \begin{eqnarray*}
{\mathscr T}_2 &\leq& \exp\Big(\frac{\bar q\mu^2\vvvert\sigma\vvvert_\infty^2[\vartheta]_1^2}{2\Gamma_n} \Big)
 \exp\left( \frac{\mu^2}{2\Gamma_n \bar p}+\frac{\bar pa_n^2}{2}\right)  \exp\left(\frac{3 \bar p C_{V,\vartheta}^2 \mu^2}{ c_V \Gamma_n^2}+\frac{c_V}{\bar p}\right)(I_V^1)^{\frac 1{2\bar p}}\nonumber\\
&&\times
\exp\left ( C_{\ref{ineq_rest_b}}\frac{\bar p \mu^2  (\Gamma_n^{(2)})^2}{  \Gamma_n^2}\right) (I_V^1)^{\frac 1{4\bar p}}\times \exp\left(C_{\ref{ineq_rest_b2_phi}}\Big( \frac{3\bar p\mu^2 (\Gamma_n^{(2)})^2}{2\Gamma_n^2}+\frac{1}{2\bar p}\Big)\right)(I_V^1)^{\frac1{4\bar p}}.\\
&\le& \exp\Big(\frac{\mu^2}{\Gamma_n}\Big( \frac{\bar q\vvvert\sigma\vvvert_\infty^2[\vartheta]_1^2}{2}+\bar p\Big(\frac{(\Gamma_n^{(2)})^2}{\Gamma_n}[C_{\ref{ineq_rest_b}}+\frac{3}2C_{\ref{ineq_rest_b2_phi}}] +\frac{3C_{V,\vartheta}^2}{c_V\Gamma_n}\Big)+\frac 1{2\bar p}\Big) \Big)\exp\Big(\frac 1{\bar p}\big(c_V+\frac {C_{\ref{ineq_rest_b2_phi}}}2\big)+\frac{\bar p a_n^2}2 \Big)(I_V^1)^{\frac1{\bar p}}.
\end{eqnarray*}
 Set now 
 \begin{eqnarray}
 \bar C_n&:=&\exp\Big(\frac 1{\bar p}\big(c_V+\frac{C_{\ref{ineq_rest_b2_phi}}}2\big)+\frac{\bar p a_n^2}2 \Big)(I_V^1)^{\frac1{\bar p}},\nonumber\\
 \bar e_n&:=&\bar p\Big(\frac{(\Gamma_n^{(2)})^2}{\Gamma_n}[C_{\ref{ineq_rest_b}}+\frac{3}2C_{\ref{ineq_rest_b2_phi}}] +\frac{3C_{V,\vartheta}^2}{c_V\Gamma_n}\Big)+\frac{1}{2\bar p}.\label{DEF_en}
 \end{eqnarray}
 In the considered case, the exponent $\bar p:=\bar p_n $ can again be taken such that $\bar p_n\underset{n}{\rightarrow}+\infty $ and $\bar p_n \frac{(\Gamma_n^{(2)})^2}{\Gamma_n}\underset{n}{\rightarrow} 0$ 
in order to have, $\bar e_n\underset{n}{\rightarrow}0, \bar C_n \underset{n}{\rightarrow}1$ with the indicated monotonicity for large enough  $n$.

 We derive from the above control and~\eqref{CTR_T1} that for all $q,\rho>1$:
 \begin{eqnarray*}
\left(\E\exp\Big(\textcolor{black}{-}\frac{\lambda q}{\Gamma_n}M_n\Big)\right)^{\frac 1q}\le \Big(\textcolor{black}{ {\mathscr T}_1^{\frac 1{\rho}}{\mathscr T}_2^{1-\frac1{\rho}}}\Big)^{\frac 1q}\le \exp\Big(\frac{\rho q\lambda^2[\varphi]_1^2\nu(\vvvert\sigma\vvvert^2)}{2\Gamma_n} \Big)\bar C_n^{\frac{\rho-1}{\rho q}}\exp\Big(\frac{\rho-1}{\rho q}\frac{\mu^2}{\Gamma_n}\Big( \frac{\bar q\vvvert\sigma\vvvert_\infty^2[\vartheta]_1^2}{2}+\bar e_n\Big)\Big).
 \end{eqnarray*}
Plugging this bound in~\eqref{DECOUP_TCHEB}, using again the controls of Lemmas~\ref{ineq_phi} and~\ref{ineq_rest}, eventually yields:
\begin{eqnarray*}
\P\left[ \sqrt{\Gamma_n }\nu_n( \mathcal{A} \varphi ) \geq  a\right] \leq  
\exp\left( - \frac{ a\lambda}{\sqrt{\Gamma_n}}\right)  
\exp\Big(\frac{\lambda^2}{2\Gamma_n}\big(\rho q[\varphi]_1^2\nu(\vvvert\sigma\vvvert^2)+\frac 1p \big) \Big)\bar C_n^{\frac{\rho-1}{\rho q}}\exp\Big(\frac{\rho-1}{\rho q}\frac{\mu^2}{\Gamma_n}\Big( \frac{\bar q\vvvert\sigma\vvvert_\infty^2[\vartheta]_1^2}{2}+\bar e_n\Big)\Big)\\
\times \exp\Big(\frac{\lambda^2}{\Gamma_n}  p\Big(\frac{(\Gamma_n^{(2)})^2}{\Gamma_n}\big(C_{\ref{ineq_rest_b}}+
\frac 32 C_{\ref{ineq_rest_b2_phi}} \big) +\frac{3C_{V,\varphi}^2}{c_V\Gamma_n}\Big) \Big)\exp\Big(\frac 1{p}\big(c_V+\frac {C_{\ref{ineq_rest_b2_phi}}}2\big)+\frac{pa_n^2}{2} \Big)
(I_V^1)^{\frac1{p}}.
\end{eqnarray*}
Choosing $p:=p_n\underset{n}{\ra}+\infty$ and such that $p_n\frac{(\Gamma_n^{(2)})^2}{\Gamma_n}\underset{n}{\ra}0 $, we get \textcolor{black}{by a standard symmetry} and with the notations introduced in the proof of Theorem~\ref{ineq_fin}:
\begin{eqnarray*}
\P\left[ \Big|\sqrt{\Gamma_n }\nu_n( \mathcal{A} \varphi )\Big| \geq  a\right] &\leq&  
2\,C_n \bar C_n^{\frac{\rho-1}{\rho q}}\exp\left( - \frac{ a\lambda}{\sqrt{\Gamma_n}}\right) 
\exp\Big(\frac{\lambda^2}{\Gamma_n}\big(\frac{\rho q[\varphi]_1^2\nu(\vvvert\sigma\vvvert^2)}{2}+e_n\big) \Big)
\\
&&\times \exp\Big(\frac{\rho-1}{\rho q}\frac{\mu^2}{\Gamma_n}\Big( \frac{\bar q\vvvert\sigma\vvvert_\infty^2[\vartheta]_1^2}{2}+\bar e_n\Big)\Big),
\end{eqnarray*}
where $e_n$ is defined similarly to $\bar e_n $ in~\eqref{DEF_en} replacing $\bar p$ by $p$. In particular $e_n\underset{n}{\ra}0 $. Note that for the previous choices of $p,\bar p$, we have that $\widetilde C_n:=C_n \bar C_n^{\frac{\rho-1}{\rho q}} \underset{n}{\ra} 1 $ uniformly in $\rho>1$. Recalling that $\mu=\frac{(q\lambda)^2\rho^2[\varphi]_1^2}{2(\rho-1)\Gamma_n}$, we are thus led to minimize the polynomial function
\[
P:\lambda\longmapsto  - \frac{ a\lambda}{\sqrt{\Gamma_n}}+\frac{\lambda^2}{\Gamma_n}A_n+\frac{\lambda^4}{\Gamma_n^3}B_n,
 \]
where $A_n =  A_n(\rho) = \rho \widetilde A_n$ and $ B_n=B_n(\rho)=\frac{\rho^3}{\rho-1}\widetilde B_n$  with  
\begin{equation}\label{eq:ABtilde}
\widetilde A_n:= \frac{ q[\varphi]_1^2\nu(\vvvert\sigma\vvvert^2)}{2}+e_n
 \quad\mbox{ and }\quad 
\widetilde B_n 
:=\frac{q^3[\varphi]_1^4}{4}\Big( \frac{\bar q\vvvert\sigma\vvvert_\infty^2[\vartheta]_1^2}{2}+\bar e_n\Big).
\end{equation}
Note that both sequences  $(\widetilde A_n)_{n\ge 1} $ and $(\widetilde B_n)_{n\ge 1}$ are  bounded and bounded away from zero sequences (and do not depend on $\rho$). The function $P$  is clearly convex and coercive so it attains its minimum at  $\lambda_{\min} $, unique zero of the equation $P'(\lambda_{\min})=0$. This equation  reads 
\begin{equation}\label{eq:derivee}
\lambda^3+\frac{A_n\Gamma^2_n }{2B_n } \lambda  -  \frac{a\Gamma_n^{\frac 52}}{4B_n}=0
\end{equation}
which is the canonical form of this third degree equation to apply the Cardan-Tartaglia formula~(\footnote{If the equation $z^3+pz+q=0$ has a unique real zero $z_*$  then 
its discriminant $\Delta= 4p^3+27q^2>0$ and  $z_*= \Big(\frac 12 \big(-q +\sqrt{\frac{\Delta}{27}}\big)\Big)^{\frac 13} +\Big(\frac 12 \big(-q -\sqrt{\frac{\Delta}{27}}\big)\Big)^{\frac 13}$.}) so that
\begin{equation}\label{DEF_LAMBDA_RHO}
 \lambda_{\min} (\rho) = \frac{\Gamma_n}{2}\!\left[  \left(\frac{ a}{\sqrt{\Gamma_n} B_n}+\sqrt{ \Big(\frac{2A_n}{3B_n}\Big)^3 + \frac{a^2}{\Gamma_nB^2_n} }\;\right)^{\frac 13}+  \left(\frac{ a}{\sqrt{\Gamma_n} B_n}-\sqrt{ \Big(\frac{2A_n}{3B_n}\Big)^3 + \frac{a^2}{\Gamma_nB^2_n} }\;\right)^{\frac 13}\right].
 \end{equation}
In order to derive our non-asymptotic bound, we select two ``regimes'' based on a first order expansion of $\lambda_{\min}$ in two cases $\frac {a}{B_n\sqrt{\Gamma_n}}\to 0$ and $\frac {B_n \sqrt{\Gamma_n}}{a}\to 0$, assuming that the free parameter $\rho=\rho_n$ to be specified later on remains bounded,  {\it e.g.}  $\rho \in (1,3]$ (which implies that both  quantities $\frac{A_n}{B_n}$ and $\frac{1}{B_n}$ remain bounded as well). 
Also, note that if $\rho\to 1$, then $\frac{1}{B_n}$ and $\frac{A_n}{B_n}\to 0$.
First, one easily checks that if $(x_n)_{\ge 1}$ and $(a_n)_{n\ge1}$ are two sequences of positive real numbers where $(a_n)_{n\ge1}$ is bounded, then
\begin{equation}\label{eq:minilemme}
\Big(x_n +\sqrt{a_n^3+x_n^2}\Big)^{\frac 13}+\Big(x_n -\sqrt{a_n^3+x_n^2}\Big)^{\frac 13}\sim\left\{\begin{array}{lll}
\frac 23 \frac{x_n}{a_n}& \mbox{if}& x_n = o\big(a_n^{\frac 32}\big)\;(\mbox{then }x_n\to 0),\\
(2x_n)^{\frac 13} & \mbox{if}& a_n= o\big(x_n^{\frac 2 3}\big)\;(\mbox{then  }x_n \to +\infty).
\end{array}\right.
\end{equation}

$\bullet$  If $\frac{a}{B_n\sqrt{\Gamma_n} } = o\Big(\Big(\frac{A_n}{B_n}\Big)^{\frac 32}\Big)$ (hence goes to $0$), setting then  $x_n = \frac{a}{B_n\sqrt{\Gamma_n}}$ and $a_n = \frac {2A_n}{3B_n}$ yields 
$$
\lambda_{\min}\textcolor{black}{(\rho)}\sim \lambda^*(\rho):=
\frac{a\sqrt{\Gamma_n}}{2A_n}\quad \mbox{ as } n\to +\infty.
$$
Note that $ \lambda^*:= \lambda^*(\rho)$ corresponds to the optimization of the quadratic part  of $P$. Then
\[
P(\lambda^*)=  - \frac{a^2}{ 4A_n}\Big(1- \frac{a^2}{4A_n^3}\frac{B_n}{\Gamma_n}\Big)=  - \frac{a^2}{ 4\widetilde A_n\rho }\Big(1- \frac{a^2}{4\widetilde A_n^3(\rho-1)}\frac{\widetilde B_n}{\Gamma_n}\Big). 
\]
Set now $\xi_n:=\frac{\alpha_n(a)}{\rho-1}$ with $\alpha_n(a) =\frac{\widetilde B_n}{4\widetilde A_n^3} \frac{a^2}{\Gamma_n}$.   Then 
\[
P(\lambda^*)=  -\frac{a^2}{4\widetilde A_n}\frac{1-\xi_n}{1+\frac{\alpha_n(a)}{\xi_n}} .
\]
It remains to maximize the mapping $\xi\mapsto \frac{1-\xi}{1+\alpha_n(a) \xi^{-1}} $ over $(0,1)$. Its optimum is attained for $\xi_n^*=\frac{1}{1+\sqrt{1+\frac{1}{\alpha_n(a)}}} $, which in turn yields  
\begin{equation}
\label{OPTI_QUADRA}
P(\lambda^*)=-\frac{a^2}{4\widetilde A_n}\left(1-\frac{2}{1+\sqrt{1+4\frac{\widetilde A_n^3\Gamma_n}{\widetilde B_na^2}}} \right).
\end{equation}

 Note that, with the resulting specification of $\rho=\rho_n^*:=1+\frac{\alpha_n(a)}{\xi_n^*}\!\in (1,3]$ (at least for large enough $n$),  the above condition $x_n = o\big(a_n^{\frac 32}\big)$ in~\eqref{eq:minilemme} is satisfied {\em a posteriori}.

\smallskip
$\bullet$   If $\frac{a}{B_n\sqrt{\Gamma_n} } \to +\infty$, then, still owing to~\eqref{eq:minilemme},
\[
\lambda_{\min}\textcolor{black}{(\rho)} \sim \bar \lambda^*(\rho)= \frac{\Gamma_n}{2} \Big(\frac{2a}{B_n\sqrt{\Gamma_n}}\Big)^{\frac 13} =\left(\frac{a\Gamma_n}{4B_n} \right)^{\frac 13}\sqrt{\Gamma_n} \quad \mbox{ as } n\to +\infty.
\] 
The value $ \bar \lambda^*(\rho)$ corresponds to the \textit{quartic} pseudo-optimum of $P$ ($i.e.$ obtained by neglecting the quadratic term). 
This yields, when reintroducing the parameter $\rho$, 
\[
P\big(\bar \lambda^*(\rho)\big)=-a^{\frac{4}{3}}\Gamma_n^{\frac 13} 
\frac{(\rho-1)^{\frac 13}}{\rho(4\widetilde B_n)^{\frac 13}}
\left( 
\frac 34-  \frac{\widetilde A_n}{(4\widetilde B_n)^{\frac 13}}\frac{\Gamma_n^{\frac 13} }{a^{\frac{2}{3}}} (\rho-1)^{\frac 13}\right).
\]
The right hand side of this equality is a function of $\rho\!\in (1,+\infty)$. Its analysis yields that the optimum is attained in $(1,3/2] $ and that it tends asymptotically in $n$ to $3/2 $ in our considered regime. Taking as suboptimal $\rho=3/2$ gives:
\begin{equation}
\label{OPTI_QUARTIC}
P\big(\bar \lambda^*(\rho)\big)\le -\frac{a^{\frac{4}{3}}}{4}\left(\frac{\Gamma_n}{\widetilde B_n}\right)^{\frac 13}\left( 1-\frac 23 \frac{\widetilde A_n}{\widetilde B_n}\Big(\frac{\Gamma_n}{a^2 }\Big)^{\frac 13} \right).
\end{equation}
From~\eqref{OPTI_QUARTIC},~\eqref{OPTI_QUADRA} and~\eqref{eq:ABtilde}, we conclude the proof of case \textit{(a)} by setting $\bar c_n:= \widetilde A_n\widetilde B_n^{-\frac 13}$ \textcolor{black}{which matches with the definition in the statement of the Theorem}.

\smallskip
In the biased case, the result follows similarly from the corresponding analysis performed in Section~\ref{MR} taking $A_n(\rho)= \frac{\rho q[\varphi]_1^2\nu(\vvvert\sigma\vvvert^2)}{2}$.
\end{proof}

\begin{remark}$\bullet$ \label{LA_RQ_QUI_CONCENTRE}
When $a\asymp \sqrt{\Gamma_n}$, one checks that $\lambda_{\min}\textcolor{black}{(\rho)} \asymp \Gamma_n$ and $P(\lambda_{\min}\textcolor{black}{(\rho)})\asymp - \Gamma_n$. This behavior is consistent with our non-asymptotic bound. However, 
%
for practical and numerical purposes observe that the optimum can be estimated. Namely,  plugging the identity~\eqref{eq:derivee}  satisfied by $\lambda_{\min}\textcolor{black}{(\rho)}$ in~\eqref{DEF_LAMBDA_RHO} into the definition of $P$, yields
\begin{eqnarray*}
P\big(\lambda_{\min}(\rho)\big)&=&-\frac{\lambda_{\min}(\rho)}{2\sqrt{\Gamma_n}}\left(\frac {3a}{2}-\frac{\lambda_{\min}(\rho)\rho\widetilde A_n}{\sqrt{\Gamma_n}}\right)\\
&=& \textcolor{black}{-}\frac{\sqrt{\Gamma_n}}{4}\frac{(\rho-1)^{\frac 13}}{\rho}\Phi_n(a,\rho)\left(\frac {3a}{2}-\frac{\sqrt{\Gamma_n}}{2}(\rho-1)^{\frac 13}\widetilde A_n \Phi_n(a,\rho)\right),
\end{eqnarray*}
\[
\mbox{where }\quad\Phi_n(a,\rho)= 
\Bigg( \frac{a}{\sqrt{\Gamma_n}\widetilde B_n }+\left((\rho-1)\Big(\frac{2 \widetilde A_n}{3\widetilde B_n}\Big)^3+\frac{a^2}{\widetilde B_n^2\Gamma_n}\right)^{\frac 12}\Bigg)^{\frac 13}+\Bigg( \frac{a}{\sqrt{\Gamma_n}\widetilde B_n }-\left( (\rho-1)\Big(\frac{2 \widetilde A_n}{3\widetilde B_n}\Big)^3+\frac{a^2}{\widetilde B_n^2\Gamma_n}\right)^{\frac 12}\Bigg)^{\frac 13}. 
\]
Then, an optimization in $\rho\!\in(1,+\infty) $ for given $a,\Gamma_n $ can  be performed (noting that $\rho\mapsto (\rho-1)^{i/3}\rho^{-1}$, $i\in \{ 1,2\}$ are bounded functions over $(1,+\infty)$).
\end{remark}

\mysection{Smoothness Results for the Poisson Problems (Proof of Theorem~\ref{Poisson_regular})}
\label{SEC_POISS}
\textcolor{black}{
We first prove here Theorem~\ref{Poisson_regular} 
which allows to derive from the deviation results of Theorems~\ref{ineq_fin} and~\ref{THM_COBORD_SHORT} the practical deviation bounds of Section~\ref{PRACT_DEV_BD} (i.e. Theorems~\ref{NA_CI_FIRST},~\ref{Slutsky_theorem} and~\ref{THEO_CTR_LIP}).
We recall that we work in the confluent setting of \A{D${}_\alpha^p$} and that we additionally consider two main types of assumptions:
}
\medskip
\begin{itemize}
\item[-] \textcolor{black}{Strong confluence conditions and smoothness \A{C${}_{\rm \mathbf R}$}. Namely, assumptions 
\A{$\mathbf {{\mathcal L}_V} $}, \A{D${}_\alpha^p$} and \A{R${}_{3,\beta} $} introduced in Sections~\ref{SEC_ASS} and~\ref{SEC_PRACTICAL_MAIN_RESULTS} with the condition $\|D \sigma\|_\infty^2\le  \frac{2\alpha}{
2(3+\beta) -p
}
$. 
}
\smallskip

\item[-] \textcolor{black}{Mild confluence conditions and non-degeneracy \A{C${}_{\rm \mathbf{UE}}$}.  Namely, assumptions 
\A{$\mathbf {{\mathcal L}_V} $}, \A{D${}_\alpha^p $}, \A{R${}_{1,\beta} $} and  \A{UE} introduced in Sections~\ref{SEC_ASS} and~\ref{SEC_PRACTICAL_MAIN_RESULTS} together, when $d>1$, with the condition $ \|D \sigma\|_\infty^2\le \frac{2\alpha}{
2(1+\beta) -p
} $ and the technical structure assumption on the diffusion coefficient that for all  $(i,j)\in \leftB 1,d\rightB^2 $,    $\Sigma_{i,j}(x)=\Sigma_{i,j}(x_{i\wedge j},\cdots,x_d) $.}
\end{itemize}

\textcolor{black}{
It is well known that when \A{C${}_{\rm \mathbf R}$} or \A{C${}_{\rm \mathbf{UE}}$} are in force,  there exists a unique invariant distribution for~\eqref{eq_diff}, i.e. assumption \A{U} holds. We refer to \cite{khas:80},  \cite{page:panl:14}, \cite{page:01}, \cite{pard:vere:01} for proofs of this assertion. The next step consists precisely in investigating the smoothness of the corresponding  Poisson problem as well as some associated quantitative pointwise bounds on the gradient of its solution, which is one of the key terms appearing in the deviation bounds of Theorems~\ref{ineq_fin} and~\ref{THM_COBORD_SHORT}. }

\textcolor{black}{Let us indicate that the conditions appearing in \A{C${}_{\rm \mathbf R}$} depend on pure pathwise properties, whereas the case \A{C${}_{\rm \mathbf{UE}}$} takes advantage of the regularity of the underlying semi-group which allows to alleviate some smoothness assumptions on the coefficients and some restrictions on the variations of $\sigma$. 
When the dimension increases, it becomes useful to benefit from the smoothing effects of a non-degenerate semi-group, especially if we keep in mind that one of our goals is to handle Lipschitz continuous sources.
}

\subsection{Proof of Theorem~\ref{Poisson_regular}}
\label{SEC_POISS_BIS}

Under \A{C${}_{\rm \mathbf{UE}}$} or \A{C${}_{\rm \mathbf R}$}, it is well known that the Poisson  equation~\eqref{POISSON} that we now recall:
\begin{equation*}
\forall x \in \R^d,\ {\mathcal A} \varphi(x) = f(x)-\nu(f),
\end{equation*}
admits a unique solution centered w.r.t. $\nu$ and with linear growth, in $W_{p,{\rm loc}}^2(\R^d,\R) $ for any $p>1 $ under \A{C${}_{\rm \mathbf{UE}}$} (see~\cite{pard:vere:01}), or in ${\mathcal C}^{3,\beta}(\R^d,\R)$ under \A{C${}_{\rm \mathbf R}$} (see Proposition A.8 in \cite{page:panl:14}).
In both cases, we have the following representation:
\begin{equation}
\label{REP_POISSON}
\textcolor{black}{\varphi(x)= -\int_{\R_+} \big( P_t f(x) -\nu(f)\big)dt\quad \mbox{ where }\quad  P_t f(x):=\E\,[ f( Y_t^{0,x}) ]}
\end{equation}
and  $Y_t^{0,x}$ solves~\eqref{eq_diff} with $Y^{0,x}_0=x$. To comply with the framework of the above Theorems~\ref{NA_CI_FIRST} and~\ref{Slutsky_theorem}, the first step is to establish a pointwise gradient control.


\subsubsection{Gradient Control} 
Under \A{C${}_{\rm \mathbf{UE}}$} or \A{C${}_{\rm \mathbf R}$} we manage to obtain pointwise gradient bounds for $\varphi $.
In our current confluent setting, these estimates are obtained through controls on the tangent flow, again without any \textit{a priori} uniform ellipticity condition of type \A{UE}.

\begin{lemme}[Pointwise Gradient Bounds]\label{PT_GR_BD}
Assume   that \A{C${}_{\rm \mathbf{UE}}$} or \A{C${}_{\rm \mathbf R}$} 
holds. Then
$$\|\nabla \varphi\|_\infty \le \frac{[f]_1}{\alpha},$$
with $ \alpha$ as in \A{D${}_\alpha^p $}. 
\end{lemme}

\begin{proof}

\smallskip
{\em Gradient Control in the Confluent  framework.}  Assume now that \A{D${}_\alpha^p $} holds.
Observe that, \textcolor{black}{as soon as \A{R${}_{1,\beta} $} 
holds}, it is well known that 
that $\nabla_x Y_t^{0,x} $ is well defined and belongs to $L^2(\P) $, see \cite{iked:wata:80}. Hence, for $t>0,\ i\in \leftB 1,d\rightB$:
\begin{eqnarray*}
\partial_{x_i} \E\,[ f( Y_t^{0,x}) ]=\E\,[\langle \nabla f(Y_t^{0,x}),\partial_{x_i} Y_t^{0,x}\rangle ],\ \partial_{x_i} Y_t^{0,x}=e_i+\int_{0}^t D b (Y_s^{0,x})\partial_{x_i} Y_s^{0,x} ds+\sum_{j=1}^d\int_0^t D\sigma_{\cdot j}(Y_s^{0,x})\partial_{x_i} Y_s^{0,x} dW_s^j,
\end{eqnarray*}
where $e_i $ stands for the $i^{\rm th} $ canonical vector 
and  $Db, D\sigma_{\cdot j} \in \R^d\otimes \R^d$. 

Let $\textcolor{black}{p\in (1,2]}$ be given such that \A{D${}_\alpha^p$} holds. Considering the mapping $y \in \R^d\mapsto |y|^p $, where  $|\cdot| $ stands for the Euclidean norm of $\R^d$, it is easily seen from It\^o's formula that:  
\begin{eqnarray}
|\partial_{x_i} Y_t^{0,x}|^p&=&1+p\int_0^t \Big \langle  \frac{\partial_{x_i} Y_s^{0,x}}{\
|\partial_{x_i} Y_s^{0,x}|} , D b (Y_s^{0,x})\frac{\partial_{x_i} Y_s^{0,x}}{|\partial_{x_i} Y_s^{0,x}|}\Big\rangle |\partial_{x_i} Y_s^{0,x}|^p ds\notag\\
&&+ p\sum_{j=1}^d\int_0^t \Big\langle \frac{\partial_{x_i} Y_s^{0,x}}{|\partial_{x_i} Y_s^{0,x}|},D\sigma_{\cdot j}(Y_s^{0,x})\frac{\partial_{x_i} Y_s^{0,x}}{|\partial_{x_i} Y_s^{0,x}|}\Big\rangle  |\partial_{x_i} Y_s^{0,x}|^p dW_s^j\notag\\
&&+\frac{p}{2}\sum_{j=1}^d\int_0^t 
\Big(\frac{|D\sigma_{\cdot j}(Y_s^{0,x})\partial_{x_i}Y_s^{0,x}|^2}{  |\partial_{x_i} Y_s^{0,x}|^{2}}+(p-2)\frac{|\langle \partial_{x_i} Y_s^{0,x}, D\sigma_{\cdot j}(Y_s^{0,x})\partial_{x_i} Y_s^{0,x}\rangle |^2}{ |\partial_{x_i} Y_s^{0,x}|^{4}}\Big)\notag\\
&&\times |\partial_{x_i} Y_s^{0,x}|^{p}ds\label{DEV_PUISS_P_FLOW}\\
&=&\exp\Bigg( p\int_0^t \Big\langle\frac{\partial_{x_i} Y_s^{0,x}}{|\partial_{x_i} Y_s^{0,x}|}, D b (Y_s^{0,x}) \frac{\partial_{x_i} Y_s^{0,x}}{|\partial_{x_i} Y_s^{0,x}|}\Big\rangle ds\Bigg)
\times{\mathcal E}\big(M\big)_t \notag \\
&&\times \exp\Bigg(\frac p2 \sum_{j=1}^d\int_0^t  
\Big(\frac{|D\sigma_{\cdot j}(Y_s^{0,x})\partial_{x_i}Y_s^{0,x}|^2}{  |\partial_{x_i} Y_s^{0,x}|^{2}}+(p-2)\frac{|\langle \partial_{x_i} Y_s^{0,x}, D\sigma_{\cdot j}(Y_s^{0,x})\partial_{x_i} Y_s^{0,x}\rangle |^2}{ |\partial_{x_i}Y_s^{0,x}|^{4}}\Big)ds\Bigg)\notag
\end{eqnarray}
 where $(M_t)_{t\ge 0}:=\big(p\sum_{j=1}^d\int_0^t \big\langle \frac{\partial_{x_i} Y_s^{0,x}}{|\partial_{x_i} Y_s^{0,x}|},D\sigma_{\cdot j}(Y_s^{0,x})\frac{\partial_{x_i} Y_s^{0,x}}{|\partial_{x_i} Y_s^{0,x}|} \big \rangle  dW_s^j\big)_{t\ge 0}$ is a  square integrable martingale with bounded integrand and ${\mathcal E}(M)_t:=\exp(M_t-\frac 12 \langle M\rangle_t)$ denotes the associated Dol\'eans exponential martingale. From condition \A{D${}_\alpha^p $}, we thus get:
\begin{eqnarray}
\label{DER_FLOW_EXPO_CONTROLE}
|\partial_{x_i} Y_t^{0,x}|^p&\le & \exp( -\alpha p t)\times {\mathcal E}(M_t).
\end{eqnarray}
We eventually derive:
\begin{eqnarray*}
 \int_0^{+\infty} |\E\,[\langle \nabla f(Y_t^{0,x}), \partial_{x_i}Y_t^{0,x}\rangle  ]| dt&\le &[f]_1\int_0^{+\infty}\E\,[|\partial_{x_i}Y_t^{0,x}|^p]^{1/p} dt\le [f]_1\int_0^{+\infty}\!\!\! \exp\left( -\alpha  t\right)dt = \frac{[f]_1}{\alpha}.
 \end{eqnarray*}
From the above control and equation~\eqref{REP_POISSON}, we thus derive:
\begin{equation}
\label{BD_GRAD_POISS}\forall i\in \leftB 1,d\rightB,\ \forall x\in \R^d,\ |\partial_{x_i} \varphi(x)|\le \frac{[f]_1}{\alpha}.
\end{equation}

Similarly, for all $ x\in \R^d, \nabla \varphi(x)=\int_{0}^{+\infty}\E[ (\nabla Y_t^{0,x})^* \nabla f(Y_0^{t,x})]dt$ where $\nabla Y_t^{0,x}=\left(\begin{array}{ccc} \partial_{x_1} Y_t^{0,x}& \cdots &\partial_{x_d} Y_t^{0,x}
\end{array}\right) $ so that $(\nabla Y_t^{0,x})^*=\left(\begin{array}{c} (\partial_{x_1} Y_t^{0,x})^*\\
\vdots\\
(\partial_{x_d} Y_t^{0,x})^*
\end{array}\right) $. Hence, recalling that $ |\cdot|$ stands for the Euclidean norm, $|\nabla \varphi(x)|\le \int_0^{+\infty}\E[\|(\nabla Y_t^{0,x})^*\| |\nabla f(Y_t^{0,x})|] dt$ where we recall that for $A\in \R^d\otimes \R^d, \|A\|:=\sup_{|z|\le 1, z\in \R^d}|Az|$ denotes the operator (or spectral) matrix norm. 
Thus, $|\nabla \varphi(x)| \le \|\nabla f\|_\infty \int_0^{+\infty}\E[\|(\nabla Y_t^{0,x})^*\|^p]^{1/p} dt=\|\nabla f\|_\infty \int_0^{+\infty}\E[\|\nabla Y_t^{0,x}\|^p]^{1/p} dt$. Now, 
\begin{eqnarray*}
\|\nabla Y_t^{0,x}\|&=&\sup_{|z|\le 1 }|\nabla Y_t^{0,x}z|.
\end{eqnarray*}
For any $z\in \R^d,|z|\le 1 $,  setting $Z_t^{0,x,z}:=\nabla Y_t^{0,x} z $, one has the following dynamics for the $\R^d $-valued process $(Z_s^{0,x,z})_{s\in [0,t]}$:
$$Z_t^{0,x,z}:=z+\int_{0}^t D b (Y_s^{0,x}) Z_s^{0,x,z} ds+\sum_{j=1}^d\int_0^t D\sigma_{\cdot j}(Y_s^{0,x}) Z_s^{0,x,z} dW_s^j.$$
Hence, we derive similarly to~\eqref{DER_FLOW_EXPO_CONTROLE} that $|Z_t^{0,x,z}|^p\le |z|^p\exp(-p\alpha t){\mathcal E}(M_t) $, where ${\mathcal E}(M_t) $ does not depend on $z$. Write now,
\begin{equation}\label{CTR_NORM_OPERATOR}
\E[\|\nabla Y_t^{0,x}\|^p]^{1/p}=\E[\sup_{|z|\le 1} |Z_t^{0,x,z}|^p]^{1/p}\le \E[\sup_{|z|\le 1}|z|^p\exp(-p\alpha t){\mathcal E}(M_t)]^{1/p}\le \exp(-\alpha t).
\end{equation}
This eventually proves the claim $\|\nabla \varphi\|_\infty:=\sup_{x\in \R^d}|\nabla \varphi(x)|\le \frac{\|\nabla f\|_\infty}{\alpha} $.
\end{proof}


\subsubsection{Additional smoothness}
\begin{trivlist}
 \item[--] Theorem~\ref{Poisson_regular} can be derived under \A{C${}_{\rm \mathbf R}$}, by iterating computations similar to the ones performed in  Lemma~\ref{PT_GR_BD}. On the other hand, to have the required smoothness, since we cannot expect some smoothing effect from a non-degenerate diffusion coefficient, we have to impose that $b,\sigma,f $  themselves lie  in ${\mathcal C}^{3,\beta}(\R^d, \R)$ and the restriction on the variations of $\sigma $ which ensures exponential integrability in time for the expectations of the iterated tangent flows, see Lemma A.8 in \cite{page:panl:14} for details (see the parallel between the above condition on $D\sigma $ and assumption \A{AC${}_p $} appearing p. 559 in \cite{page:panl:14}). 
\item[--] Proving Theorem~\ref{Poisson_regular} under \A{C${}_{\rm \mathbf{UE}}$} requires more sophisticated tools (Schauder estimates for operators with unbounded coefficients). 
\end{trivlist}

\begin{proof}[Proof of Theorem~\ref{Poisson_regular} under \A{C${}_{\rm \mathbf{UE}}$}]
 Let us begin with the scalar case. For $d=1$, set
 for all $x\in \R$, 
 \begin{equation}
\label{DEF_V_SCAL}
 v(x):=-\int_{0}^{+\infty}dt\, \E[\Psi(Y_t^{0,x})\partial_x Y_t^{0,x}]=-\int_{0}^{+\infty}dt \,\E\!\left[\Psi(Y_t^{0,x})\exp\Big(\int_0^tb'(Y_s^{0,x})ds\Big){\mathcal E}\Big(\int_0^t\sigma'(Y_s^{0,x}) dW_s\Big)\right]
 \end{equation}
 where  for all $y\in \R,\ \Psi(y):=\partial_{y} f(y)$. We observe that $\partial_{x}\varphi(x)=v(x) $. Also, from our assumptions on $f$, $b,\sigma $, we have that $ \Psi,b',\sigma' \in {\mathcal C}_{\textcolor{black}{b}}^{\textcolor{black}{0},\beta}(\R^d,\R)$. Theorems 2.4-2.6 in Krylov and Priola,~\cite{kryl:prio:10} then yield the existence of  a unique solution to the PDE:
\begin{equation}
\label{PDE_PREAL_KP}
\widetilde {\mathcal A}w(x)+b'(x) w(x)=\Psi(x),\quad\mbox{ where }\quad  \widetilde {\mathcal A}w(x)= {\mathcal A}w(x)+\sigma\sigma'(x) w'(x),
\end{equation}
belonging to ${\mathcal C}_{\textcolor{black}{b}}^{2,\beta}(\R^d,\R) $ and such that the following Schauder estimate holds:
\begin{equation}
\label{SCHAUDER}
\exists\, C\ge 1,\ \|w\|_{2,\beta}\le C(1+\|\Psi\|_\beta) .
\end{equation} 
Indeed, from \A{D${}_\alpha^p$}, we get that $b'(x)\le -\textcolor{black}{\alpha}
 <0 $ and the potential in~\eqref{PDE_PREAL_KP} has the good sign.
From~\eqref{DEF_V_SCAL} and the Girsanov theorem, we also get:
\begin{eqnarray*}
v(x)=-\int_0^{+\infty} \!\!dt\, \E\left[\Psi(\widetilde Y_t^{0,x})\exp\Big(\int_0^t b'(\widetilde Y_s^{0,x})ds\Big)\right],
\end{eqnarray*}
where $d\widetilde Y_s^{0,x} = \big(b(\widetilde Y_s^{0,x})+\sigma\sigma'(\widetilde Y_s^{0,x})\big) ds+\sigma(\widetilde Y_s^{0,x})dW_s $. Note that $\widetilde Y $ has generator $\widetilde {\mathcal A} $.
A simple identification procedure, similar  to the proof of Theorem II.1.1 in Bass~\cite{bass:97} then gives $v=w $. The result follows from~\eqref{SCHAUDER}. Let us emphasize that this is a quite deep and involved result for unbounded coefficients.

In the multi-dimensional setting, recalling the technical condition that for all $i\in \leftB 1,d\rightB,\ j\ge i $, $\Sigma_{i,j}(x)=\Sigma_{i,j}(x_i,\cdots,x_d) $, we have that differentiating formally the PDE~\eqref{POISSON} in the space variable $x_i,\ i\in \leftB 1,d\rightB$ yields that $\partial_{x_i} \varphi=v_i$ should satisfy:
\begin{eqnarray}
\label{SYST_I}
 \widetilde {\mathcal A} \,w_i(x)+\partial_{x_i} b_i(x)  w_i(x)&=&
 \textcolor{black}{}\Psi_i(x)-\sum_{j\in \leftB 1,d\rightB\setminus\{ i\}} \partial_{x_i}b_j(x) v_j(x)\notag\\
 &&\textcolor{black}{-\frac 12 \sum_{
j\in \leftB 1,i-1\rightB }\partial_{x_i}\Sigma_{j,j}(x)\partial_{x_j}v_j(x)-\hskip -0.5cm 
 \sum_{j\in \leftB 1,i-1\rightB}\ \sum_{k\in \leftB j+1,d \rightB \setminus\{i\} }
 \hskip -0.5cm \partial_{x_i}
\Sigma_{j,k}(x)\partial_{x_j} v_k(x)},
\end{eqnarray}
with  $\textcolor{black}{\Psi_i(x)}:=\textcolor{black}{\partial_{x_i}f(x)}$ and 
\begin{eqnarray*}
\widetilde {\mathcal A}w_i(x):= {\mathcal A}\,w_i(x)\textcolor{black}{+\frac 12\partial_{x_i}\Sigma_{i,i}(x)\partial_{x_i} w_i(x)}+ \sum_{\textcolor{black}{j\in \leftB 1,d\rightB\setminus\{ i\}}}
 \partial_{x_i}\Sigma_{i,j}(x)\partial_{x_j} w_i(x).
\end{eqnarray*}

 \textcolor{black}{We would now like to enter the previous framework of Schauder estimates.
 To do so, we  first observe from \A{D${}_{\alpha}^p $} and the Cauchy-Schwarz inequality that $\partial_{x_i}b_i(x)\le -\alpha <0$. Consider now $i=1$ in \eqref{SYST_I}. From our current assumptions on $f$, $b $ and the previous computations on the gradient for the multi-dimensional case, it remains to prove} 
$\widetilde \Psi_1(x):=\Psi_1(x)-\sum_{j\ne 1} \partial_{x_1}b_j(x) v_j(x) \in {\mathcal C}_{\textcolor{black}{b}}^{\textcolor{black}{0},\beta}(\R^d,\R)$. \textcolor{black}{This will be the case, once we will have proved that  $\nabla \varphi $ is $\beta $-H\"older continuous, which is a priori not direct. This property is assumed for the remaining of the proof and shown below. In particular, it leads to the restriction concerning the variations of $\sigma $ when $d>1$}. Hence, 
Theorems 2.4-2.6 in Krylov and Priola,~\cite{kryl:prio:10} still apply and give that there exists a unique solution $w_1\in {\mathcal C}_{\textcolor{black}{b}}^{2,\beta}(\R^d,\R)$ to~\eqref{SYST_I} which also satisfies:
\begin{equation}
\label{SCHAUDER_1}
\exists \,C\ge 1,\ \|w_1\|_{2,\beta}:=\sum_{\alpha, |\alpha|\in \leftB 0,2\rightB}\|D^\alpha w_1\|_\infty+[D^{(2)}w_1]_\beta\le C(1+\|\widetilde \Psi_1\|_\beta) =:\bar C(\textcolor{black}{
(\mathbf {{\mathcal L}_V}) , ({\mathbf R}_{1,\beta}), \A{UE}}).
\end{equation} 
The identification $w_1=\partial_{x_1} \varphi =v_1$ is standard. 
The control~\eqref{SCHAUDER_1} allows to iterate, since it gives that $\nabla w_1=(\partial_{x_{1}} v_1,\cdots,\partial_{x_{d}} v_1)=(\partial_{x_{1},x_1} \varphi_1,\cdots,\partial_{x_{d},x_1} \varphi)  $ is  $\beta$-H\"older. 
We thus get by induction, from the specific chosen structure on  $\sigma $ and by Theorems 2.4-2.6 in Krylov and Priola,~\cite{kryl:prio:10}, that for all $i\in \leftB 1,d\rightB $ there exists a unique solution $w_i\in {\mathcal C}_{\textcolor{black}{b}}^{\textcolor{black}{2},\beta}(\R^d,\R)$ to~\eqref{SYST_I} such that:
\begin{equation}
\label{SCHAUDER_2}
\begin{split}
\exists \,C\ge 1,\ \|w_i\|_{2,\beta}\le C(1+\|\widetilde \Psi_i\|_\beta) =:\bar C(\textcolor{black}{ 
(\mathbf {{\mathcal L}_V}) , ({\mathbf R}_{1,\beta}), \A{UE}}
),\\
 \widetilde \Psi_i(x):=\Psi_i(x)-\sum_{j\in \leftB 1,d\rightB \setminus \{i\}} \partial_{x_i}b_j(x) v_j(x)-\frac 12\sum_{{\tiny \begin{array}{c}1\le j<i,\\
 k\in \leftB 1,d\rightB \setminus\{ i\}
 \end{array}}}\partial_{x_i}\Sigma_{j,k}(x)\partial_{x_j} v_k(x).
 \end{split}
\end{equation}

\textcolor{black}{ The Lipschitz property of the mapping $x\mapsto \langle \nabla \varphi(x), b(x)\rangle  $ is eventually derived following the procedure described in Remark \ref{REM_COND_B_NABLA_PHI}.}

\end{proof}

\begin{remark}[Structure of $\sigma $] We emphasize that the structure condition on $\sigma $ assumed in Theorem~\ref{Poisson_regular} under \A{C${}_{\rm \mathbf{UE}}$} is mainly technical. It is of course always verified in dimension $d=1$. For $d>1 $ it is motivated by the fact that,  differentiating~\eqref{POISSON} without this assumption yields to consider a system of coupled linear PDEs with growing coefficients for which the Schauder estimates have not been established  yet. Following the existing literature for Schauder estimates for systems (see  {\it e.g.}  Boccia \cite{bocc:13}), we think that the results of Krylov and Priola should extend to this case. This would allow to get rid of the indicated condition. Here, the condition simply allows to decouple the system.

Let us  mention too that the results by Priola \cite{prio:09} could also be a starting point to investigate the smoothness of the Poisson problem for degenerate kinetic models.

These aspects will concern further research. 
\end{remark}

\textit{Additional Smoothness continued: $\beta $-H\"older continuity of the gradient through pathwise analysis. 
}
\label{SEC_COMPLETE_SMOOTHNESS}
{We control here, under \A{D${}_\alpha^p $}, $p\in (1,2] $ and \A{R${}_{1,\beta}$}, $\beta\in (0,1] $, the $\beta $-H\"older modulus of continuity of the gradient. We will progressively see how the restrictions on $D\sigma $ 
come out.
 For $(x,x') \in \R^{2d} $, write for all $i\in \leftB 1,d \rightB $:
\begin{eqnarray}
\Big|\partial_{x_i} \varphi(x)-\partial_{x_i} \varphi(x')\Big|= \bigg|\int_0^{+\infty} \Big(\E\,[\langle \nabla f(Y_t^{0,x}), \partial_{x_i}Y_t^{0,x}\rangle  ]-  \E\,[\langle \nabla f(Y_t^{0,x'}), \partial_{x_i}Y_t^{0,x'}\rangle  ]\Big) dt\bigg|\notag \\
\le \bigg|\int_0^{+\infty} \Big([\nabla f]_\beta\E\,[ |Y_t^{0,x}-Y_t^{0,x'}|^\beta|\partial_{x_i}Y_t^{0,x}| ]+ \|\nabla f\|_\infty \E\,[|\partial_{x_i}Y_t^{0,x}- \partial_{x_i}Y_t^{0,x'}|  ]\Big) dt\bigg|=:(G_1^\beta+G_2^\beta)(x,x').\notag \\
\label{PREAL_CTR_GRAD_BETA}
\end{eqnarray}
Let us first deal with the expectation in $G_1^\beta $. Namely, write
\begin{eqnarray*}
\E\,[ |Y_t^{0,x}-Y_t^{0,x'}|^\beta|\partial_{x_i}Y_t^{0,x}| ]\le \E\,[ |Y_t^{0,x}-Y_t^{0,x'}|^{\bar p \beta}]^{\frac{1}{\bar p}}\E[|\partial_{x_i}Y_t^{0,x}|^{\bar q} ]^{\frac 1{\bar q}},\ \bar p,\bar q>1,\ \bar p^{-1}+\bar q^{-1}=1. 
\end{eqnarray*}
Take now $\bar  p\beta =\bar q \iff \bar p=\frac{1+\beta}{\beta}, \ \bar q=1+\beta $ which leads to the same integrability constraints on the flows. 

If $\beta+1\le p $ in \A{D${}_{\alpha}^{p}$}, then we readily get similarly to~\eqref{DER_FLOW_EXPO_CONTROLE} that $\E[|\partial_{x_i}Y_t^{0,x}|^{\bar q} ]^{\frac 1{\bar q}}\le \exp(-\alpha t)$.

If now $\beta+1>p $, as soon as \A{D${}_{\bar \alpha}^{1+\beta}$} holds for some $\bar \alpha>0 $, which is actually the case provided that 
\begin{equation}\label{FIRST_CTR_DSIG}
\|D\sigma\|_\infty^2\le \frac{2\alpha}{
1+\beta-p
},  
\end{equation}
for
 $\bar q=1+\beta$, we again get similarly to~\eqref{DER_FLOW_EXPO_CONTROLE} that $\E[|\partial_{x_i}Y_t^{0,x}|^{\bar q} ]^{\frac 1{\bar q}}\le \exp(-\bar \alpha t) $. On the other hand, the mean value theorem yields:
\begin{eqnarray*}
\E\,[ |Y_t^{0,x}-Y_t^{0,x'}|^{\bar p \beta}]^{\frac{1}{\bar p}}&\le& |x-x'|^\beta\E\,[ \int_0^1 d\lambda \| \nabla Y_t^{0,x'+\lambda(x-x')}\|^{\bar p \beta}]^{\frac{1}{\bar p}}
\le |x-x'|^\beta \Big(\int_0^1 d\lambda \E\,[  \|  \nabla Y_t^{0,x'+\lambda(x-x')}\|^{\bar p \beta}]\Big)^{\frac{1}{\bar p}}\\
&\le& |x-x'|^\beta 
\left[\exp(-\alpha \beta t)\I_{1+\beta\le p}+\exp(-\bar \alpha \beta t)\I_{1+\beta>p}\right],
\end{eqnarray*}
exploiting ~\eqref{CTR_NORM_OPERATOR} for the last inequality provided that~\eqref{FIRST_CTR_DSIG}, which in turn implies that \A{D${}_{\bar \alpha}^{1+\beta}$} for some $\bar \alpha>0 $, holds if $1+\beta >p $. Plugging these bounds in~\eqref{PREAL_CTR_GRAD_BETA} gives that:
\begin{equation}
\label{CTR_G1} 
\forall (x,x')\in (\R^d)^2,\ |G_1^\beta(x,x')|\le \frac{
[\nabla f]_\beta}{(1+\beta)}\left[\frac{\I_{1+\beta\le p}}{\alpha}+\frac{\I_{1+\beta>p}}{\bar \alpha} \right] |x-x'|^\beta.
\end{equation}
We already see that, when $1+\beta> p $, for the parameter $p$ of the initial confluence condition \A{D${}_{\alpha}^p $}, a first constraint on the variations of $\sigma$, namely~\eqref{FIRST_CTR_DSIG}
appears. 

Let us now turn to $G_2^\beta $. Following the expansion of~\eqref{DEV_PUISS_P_FLOW} write:
\begin{align*}
 |\partial_{x_i} Y_t^{0,x}-\partial_{x_i} Y_t^{0,x'}|^{2}= &\;
 2\int_0^t \Big \langle  \partial_{x_i} Y_s^{0,x}
-\partial_{x_i} Y_s^{0,x'} , D b (Y_s^{0,x})\partial_{x_i} Y_s^{0,x}-D b (Y_s^{0,x'})\partial_{x_i} Y_s^{0,x'}\Big\rangle 
ds\notag\\
&+ 2\sum_{j=1}^d\int_0^t \Big\langle \partial_{x_i} Y_s^{0,x}-\partial_{x_i} Y_s^{0,x'},D\sigma_{\cdot j}(Y_s^{0,x})\partial_{x_i} Y_s^{0,x}-D\sigma_{\cdot j}(Y_s^{0,x'})\partial_{x_i} Y_s^{0,x'}\Big\rangle  
dW_s^j\notag\\
&+\sum_{j=1}^d\int_0^t 
|D\sigma_{\cdot j}(Y_s^{0,x})\partial_{x_i}Y_s^{0,x}-D\sigma_{\cdot j}(Y_s^{0,x'})\partial_{x_i}Y_s^{0,x'}|^2ds.\notag 
\end{align*}
Let $u(t):= \E\,|\partial_{x_i} Y_t^{0,x}-\partial_{x_i} Y_t^{0,x'}|^{2}$, $t\ge 0$. First note that $u(0)=0$. Taking now the expectation and interchanging expectation and time integration yields
\[
u(t) = \int_0^t \E\, \Xi_sds
\]
where $(\Xi_t)_{t\ge 0}$ is a pathwise continuous process clearly determined by the terms inside the above time integrals. One readily checks that, $t\mapsto \E\, \Xi_s$ is continuous so that $u$ is continuously differentiable and satisfies
\begin{align*}
u'(t)=&\;
 2\, \E\, \Big \langle  \partial_{x_i} Y_t^{0,x}
-\partial_{x_i} Y_t^{0,x'} , D b (Y_t^{0,x})\partial_{x_i} Y_t^{0,x}-D b (Y_t^{0,x'})\partial_{x_i} Y_t^{0,x'}\Big\rangle 
\notag\\
&+\sum_{j=1}^d \E\, |D\sigma_{\cdot j}(Y_t^{0,x})\partial_{x_i}Y_t^{0,x}-D\sigma_{\cdot j}(Y_t^{0,x'})\partial_{x_i}Y_t^{0,x'}|^2.\notag 
\end{align*}

Using the Young inequality for a parameter $\varepsilon \in (0,1] $,  small enough and to be chosen further, we derive:
\begin{align*}
u'(t)
\le& \; 2\,\E\Bigg[ \Big \langle  \frac{\partial_{x_i} Y_t^{0,x}-\partial_{x_i} Y_t^{0,x'}}{|\partial_{x_i} Y_t^{0,x}-\partial_{x_i} Y_t^{0,x'}|} , D b (Y_t^{0,x})\frac{\partial_{x_i} Y_t^{0,x} -\partial_{x_i} Y_t^{0,x'}}{|\partial_{x_i} Y_t^{0,x} -\partial_{x_i} Y_t^{0,x'}|}\Big\rangle |\partial_{x_i} Y_t^{0,x}-\partial_{x_i} Y_t^{0,x'}|^{2} 
\\
&+
\int_0^t \|D b (Y_t^{0,x})-D b (Y_t^{0,x'})\| |\partial_{x_i} Y_t^{0,x'}| |\partial_{x_i} Y_t^{0,x}-\partial_{x_i} Y_t^{0,x'}| \Bigg]\\
&+ \E\Bigg[  (1+\varepsilon)\sum_{j=1}^d\bigg|D\sigma_{\cdot j}(Y_t^{0,x})\frac{\partial_{x_i}Y_t^{0,x}-\partial_{x_i}Y_t^{0,x'}}{|\partial_{x_i}Y_t^{0,x}-\partial_{x_i}Y_t^{0,x'}|}\bigg|^2 |\partial_{x_i}Y_t^{0,x}-\partial_{x_i}Y_t^{0,x'}|^{2}\\
&+ (1+\varepsilon^{-1})\sum_{j=1}^d\|D\sigma_{\cdot j}(Y_t^{0,x})-D\sigma_{\cdot j}(Y_t^{0,x'})\|^2|\partial_{x_i} Y_t^{0,x'}|^2  \Bigg].
\end{align*}
From this computation, the point is now to make the confluence condition \A{D$ {}_\alpha^p$}  appear and to separate the components for which we will exploit the $\beta $-H\"older continuity, namely $Db, (D\sigma_{\cdot j})_{j\in \leftB 1,n \rightB} $. To do so we first observe that:
\begin{eqnarray*}
&&\Big\langle  \frac{\partial_{x_i} Y_t^{0,x}
-\partial_{x_i} Y_t^{0,x'}}{|\partial_{x_i} Y_t^{0,x}
-\partial_{x_i} Y_t^{0,x'}|} , D b (Y_t^{0,x})\frac{\partial_{x_i} Y_t^{0,x} -\partial_{x_i} Y_t^{0,x'}}{|\partial_{x_i} Y_t^{0,x} -\partial_{x_i} Y_t^{0,x'}|}\Big\rangle
+
\frac{1}2 \sum_{j=1}^d\Bigg(\bigg|D\sigma_{\cdot j}(Y_t^{0,x})\frac{\partial_{x_i}Y_t^{0,x}-\partial_{x_i}Y_t^{0,x'}}{|\partial_{x_i}Y_t^{0,x}-\partial_{x_i}Y_t^{0,x'}|}\bigg|^2 
\Bigg)\\
&&\le -\alpha+(2-p)\frac 1 
2\|D\sigma\|_\infty^2=-\tilde \alpha,
\end{eqnarray*}
where we suppose from now on that 
\begin{equation}
\label{COND_TILDE_ALPHA}
-\tilde \alpha:=-\alpha+(2-p)\frac  1
2\|D\sigma\|_\infty^2<0 \iff \|D\sigma\|_\infty^2 <\frac{2 \alpha}{
2-p
}.
\end{equation}
Hence,
\begin{align*}
u'(t)\le &\; 2\E\bigg[ (-\tilde \alpha+\frac {\varepsilon} 2 \|D\sigma\|_\infty^2 ){|\partial_{x_i} Y_t^{0,x} -\partial_{x_i} Y_t^{0,x'}|^2} \bigg]\\
&+2 \E\bigg[\|D b (Y_t^{0,x})-D b (Y_t^{0,x'})\| |\partial_{x_i} Y_t^{0,x'}| |\partial_{x_i} Y_t^{0,x}-\partial_{x_i} Y_t^{0,x'}| \bigg]\\
&+ 
\E\bigg[(1+\varepsilon^{-1})\sum_{j=1}^d\|D\sigma_{\cdot j}(Y_t^{0,x})-D\sigma_{\cdot j}(Y_t^{0,x'})\|^2|\partial_{x_i} Y_t^{0,x'}|^2  
 \bigg].
\end{align*}
Using now again the Young inequality, with $\eta\in(0,1] $ small enough, for the middle term of the above r.h.s., we obtain: 
\begin{align}
u'(t) \le &\; 2 \Big(-\tilde \alpha+\frac {\varepsilon} 2 \|D\sigma\|_\infty^2 +\frac{\eta}{2} \Big) \E\Big[|\partial_{x_i} Y_t^{0,x} -\partial_{x_i} Y_t^{0,x'}|^{2}\Big] \notag\\
&+  \eta^{-1}\E\big[\|D b (Y_t^{0,x})-D b (Y_t^{0,x'})\|^{2} |\partial_{x_i} Y_t^{0,x'}|^{2}\big]  + (1+\varepsilon^{-1} )\sum_{j=1}^d\E\Big[\|D \sigma_{\cdot j} (Y_t^{0,x})- D \sigma_{\cdot j} (Y_t^{0,x'})\|^{2} |\partial_{x_i} Y_t^{0,x'}|^{2}\Big] 
\notag\\
\le &\; 2 \Big(-\tilde \alpha+\frac {\varepsilon} 2 \|D\sigma\|_\infty^2 +\frac \eta 2 \Big) \E\Big[|\partial_{x_i} Y_t^{0,x} -\partial_{x_i} Y_t^{0,x'}|^{2}\Big] \notag\\
&+ \eta^{-1}[Db]^{2}_\beta\E\big[|Y_t^{0,x}-Y_t^{0,x'}|^{2 \beta} |\partial_{x_i} Y_t^{0,x'}|^{2}\big] + (1+\varepsilon^{-1}) [D\sigma]_\beta^{2}
\E\Big[| Y_t^{0,x}- Y_t^{0,x'}|^{2 \beta } |\partial_{x_i} Y_t^{0,x'}|^{2}\Big] .\label{PREAL_GRONWALL_G2}
\end{align}
Denote:
\begin{eqnarray*}
-\tilde \alpha_{\varepsilon,\eta,\sigma} := -\tilde \alpha+\frac {\varepsilon} 2 \|D\sigma\|_\infty^2 +\frac \eta 2<0,
\end{eqnarray*}
for $\varepsilon,\ \eta $ small enough. Setting for every $t\ge 0$,
\[
r(t):= \eta^{-1}[Db]^{2}_\beta\E\big[|Y_t^{0,x}-Y_t^{0,x'}|^{2\beta} |\partial_{x_i} Y_t^{0,x'}|^{2}\big]+(1+\varepsilon^{-1})D\sigma]_\beta^{2}
\E\Big[| Y_t^{0,x}- Y_t^{0,x'}|^{2 \beta } |\partial_{x_i} Y_t^{0,x'}|^{2}\Big] ,
\]
equation~\eqref{PREAL_GRONWALL_G2} reads an ordinary differential inequation:
\begin{eqnarray*}
u'(t)\le  -2 \tilde \alpha_{\varepsilon, \eta,\sigma} u(t) + r(t) , \quad u(0) =0.
\end{eqnarray*}
We derive from the Gronwall lemma that 
$$
u(t)= \E[|\partial_{x_i} Y_t^{0,x}-\partial_{x_i} Y_t^{0,x'}|^{2}]
\le \exp(-2\tilde \alpha_{\varepsilon,\eta,\sigma} t)\int_0^t \exp(2\tilde \alpha_{\varepsilon,\eta,\sigma} s) r(s) ds .
$$

Reproducing as well the computations that led to~\eqref{CTR_G1}, we derive:
\[
u(t) \le C_{\eta,\varepsilon,\beta}|x-x'|^{2 \beta} \!\!
\int_0^t\!\! \exp\big(-2\tilde \alpha_{\varepsilon,\eta,\sigma} (t-s)\big)\bigg( \E\Big[ |\partial_{x_i} Y_s^{0,x'}|^{2(1+\beta)}\Big]^{\frac{1}{1+\beta}} \!\!+\!\!\int_{0}^1\! d\lambda \E\Big[\|\nabla Y_s^{0,x'+\lambda (x-x')}\|^{2(1+\beta)} \Big]^{\frac{\beta}{1+\beta}}
\bigg)ds.
\]}
From the analysis leading to~\eqref{DER_FLOW_EXPO_CONTROLE},~\eqref{CTR_NORM_OPERATOR} we now derive:
\begin{align*}
 \E[|\partial_{x_i} Y_t^{0,x}&-\partial_{x_i} Y_t^{0,x'}|^{2}]\\
&\le\frac{C_{\eta,\varepsilon,\beta}}2|x-x'|^{2 \beta} \exp(-2\tilde \alpha_{\varepsilon,\eta,\sigma} t) \bigg[\frac{\exp\big(2(\tilde \alpha_{\varepsilon,\eta,\sigma}-\tilde \alpha_{2(1+\beta)}) t\big)}{\tilde \alpha_{\varepsilon,\eta,\sigma}-\tilde \alpha_{2(1+\beta)}}+\frac{\exp\big(2 (\tilde \alpha_{\varepsilon,\eta,\sigma}-\beta\tilde \alpha_{2(1+\beta)}) t\big)}{\tilde \alpha_{\varepsilon,\eta,\sigma}-\beta\tilde \alpha_{2(1+\beta)}}\bigg],\\
\mbox{and }\hskip 2cm &\\
-\tilde \alpha _{2(1+\beta)}&\le -\alpha +\big(2(1+\beta)-p\big) \frac 1 2\|D\sigma\|_\infty^2.
\end{align*}
Thus, $\tilde \alpha _{2(1+\beta)}<0 $ as soon as
\begin{equation} \label{BD_TILDE_P_BETA}
\|D\sigma\|_\infty^2<
\frac{2\alpha}{
2(1+\beta)-p
} 
,
\end{equation}
which is precisely the restriction on the variations of $ \sigma$ appearing in \A{C${}_{\rm \mathbf{UE}}$} when $d> 1$, then $
\tilde \alpha_{2(1+\beta)} >0$ and:
\begin{eqnarray*}
&&\E[|\partial_{x_i} Y_t^{0,x}-\partial_{x_i} Y_t^{0,x'}|^{2}]\le \bar C_{\eta,\varepsilon,\beta}|x-x'|^{2 \beta}\exp(-\tilde \alpha_{2(1+\beta)} t).
\end{eqnarray*}
This last control then gives the expected bound for the $\beta $-H\"older modulus of the gradient. Namely,
from~\eqref{PREAL_CTR_GRAD_BETA},~\eqref{CTR_G1},
$$[\partial_{x_i} \varphi ]_\beta <\frac{
[\nabla f]_\beta}{(1+\beta)}\Big[\frac{\I_{ 1+\beta \le p}}{\alpha}+\frac{\I_{1+\beta>p}}{\bar \alpha} \Big]+\frac{\|\nabla f\|_\infty \bar C_{\eta,\varepsilon,\beta}}{\tilde \alpha_{2(1+\beta)} }.$$

\subsection{Proof of the Practical Results of Section~\ref{PRACT_DEV_BD}}
We first begin with the proof of the 

\subsubsection{Slutsky like Theorem~\ref{Slutsky_theorem}} We keep here for simplicity the generic notation $\|\cdot\| $ for any admissible matrix norm according to the assumptions of the theorem. 
We first write:
\begin{equation}
\P \left [\sqrt{\Gamma_n} \frac{\nu_n(f)-\nu(f)}{\sqrt{\nu_n \left (\|\sigma\|^2  \right )  } } \geq a \right ]
 = \P \left [\nu_n(\mathcal A \varphi ) \geq \frac{a}{\sqrt{\Gamma_n} } \sqrt{\nu_n \left (   \|\sigma\|^2  \right )}  \right ].
\end{equation}
We then proceed similarly to Theorem~\ref{THM_COBORD}, with an exponential Bienaymé-Tchebychev inequality, for all $\lambda>0$ we have:
\begin{eqnarray*}
 \P \left [\sqrt{\Gamma_n} \frac{\nu_n(f)-\nu(f)}{\sqrt{\nu_n \left (  \|\sigma\|^2  \right )   } }\right.\!\!&\!\! \geq\!\! &\! \!\! a \Bigg]
\leq 
\E \Big [ \exp \left (  - \frac{a\lambda}{\sqrt{\Gamma_n} } \sqrt{ \nu_n \left (   \|\sigma\|^2  \right )  } \right )  
\exp \left ( \lambda \nu_n( \mathcal A \varphi) \right )
\Big ]
\\
&=&
 \exp \left (  - \frac{a\lambda}{\sqrt{\Gamma_n} }\sqrt{  \nu\left (   \|\sigma\|^2  \right ) } \right )  
\E \Big [ \exp \left (  - \frac{a\lambda}{\sqrt{\Gamma_n} } \left [ \sqrt{  \nu_n \left (   \|\sigma\|^2  \right )  }  -
\sqrt{  \nu\left (   \|\sigma\|^2  \right ) }   \right ] \right )
\exp \left ( \lambda \nu_n( \mathcal A \varphi) \right )
\Big ]
\\
&=&
 \exp \left (  - \frac{a\lambda}{\sqrt{\Gamma_n} }\sqrt{  \nu\left (   \|\sigma\|^2  \right ) } \right )  
\E \Big [ \exp \left (  - \frac{a\lambda}{\sqrt{\Gamma_n} } \frac{   \nu_n \left (   \|\sigma\|^2  \right )    -
 \nu\left (   \|\sigma\|^2  \right )    }{\sqrt{  \nu_n \left (   \|\sigma\|^2  \right )  }  +
\sqrt{  \nu\left (   \|\sigma\|^2  \right ) }  } \right )
\exp \left ( \lambda \nu_n( \mathcal A \varphi) \right )
\Big ]
\\
&=&
 \exp \left (  - \frac{a\lambda}{\sqrt{\Gamma_n} }\sqrt{  \nu\left (   \|\sigma\|^2  \right ) } \right )  
\E \Big [ \exp \left (  - \frac{a\lambda}{\sqrt{\Gamma_n} } 
\frac{   \nu_n \left (  \mathcal A \vartheta  \right )    }{\sqrt{  \nu_n \left (   \|\sigma\|^2  \right )  }  +
\sqrt{  \nu\left (   \|\sigma\|^2  \right ) }  } \right )
\exp \left ( \lambda \nu_n( \mathcal A \varphi) \right )
\Big ].
\end{eqnarray*}
By the H\"older inequality, for $\widetilde p, \widetilde q>1$, such that $\frac 1 {\widetilde p} + \frac 1 {\widetilde q}=1$:
\begin{eqnarray*}
\P \left [\sqrt{\Gamma_n} \frac{\nu_n(f)-\nu(f)}{\sqrt{\nu_n \left ( \|\sigma\|^2  \right )   } }\right.\!\!&\!\! \geq\!\! &\! \!\! a \Bigg]
\\
\!\!&\!\! \leq\!\! &\! \!\
  \exp \left (  - \frac{a\lambda}{\sqrt{\Gamma_n} }\sqrt{  \nu\left (   \|\sigma\|^2  \right ) } \right )  
\hskip-0.2cm \left [\E  \exp \left (  - \frac{a \widetilde p \lambda}{\sqrt{\Gamma_n} } 
\frac{   \nu_n \left (  \mathcal A \vartheta  \right )    }{\sqrt{  \nu_n \left (   \|\sigma\|^2  \right )  }  +
\sqrt{  \nu\left (   \|\sigma\|^2  \right ) }  } \right )\right ]^{1/\widetilde p}\hskip-0.5cm 
\Big [\E  \exp \left ( \lambda \widetilde q \nu_n( \mathcal A \varphi) \right )
\Big ]^{1/ \widetilde q}\hskip-0.25cm .
\end{eqnarray*}
The proof of Theorem~\ref{THM_COBORD} yields:
\begin{eqnarray}\label{Tcheb_Slutsky}
\P  \Bigg[\sqrt{\Gamma_n} \frac{\nu_n(f)-\nu(f)}{\sqrt{\nu_n \left (  \|\sigma\|^2  \right )   } }\geq a \Bigg]
&\leq&  \mathscr R_n  \exp \left (  - \frac{a\lambda}{\sqrt{\Gamma_n} }\sqrt{  \nu\left (   \|\sigma\|^2  \right ) } \right )   \exp \left ( \frac{\rho 
\widetilde q \lambda^2}{\Gamma_n} \widetilde A_n  +  \frac{\rho^3 
 \widetilde q^3 \lambda^4}{(\rho-1)\Gamma_n} \widetilde B_n\right ) 
\nonumber \\
&&
\times \left (\E  \exp \left (  - \frac{a \widetilde p \lambda}{\sqrt{\Gamma_n} } 
\frac{   \nu_n \left (  \mathcal A \vartheta  \right )    }{\sqrt{  \nu_n \left (   \|\sigma\|^2  \right )  }  +
\sqrt{  \nu\left (   \|\sigma\|^2  \right ) }  } \right )\right )^{1/\widetilde p},
\end{eqnarray}
where we recall from identity~\eqref{eq:ABtilde}:
\begin{equation*}
\widetilde A_n= \frac{ q[\varphi]_1^2\nu(\|\sigma\|^2)}{2}+e_n
 \quad\mbox{ and }\quad 
\widetilde B_n 
=\frac{q^3[\varphi]_1^4}{4}\Big( \frac{\bar q\|\sigma\|_\infty^2[\vartheta]_1^2}{2}+\bar e_n\Big).
\end{equation*}
Also, $\mathscr R_n \underset{n}{\rightarrow} 1$ denotes a ``generic" remainder.
Observe that thanks to the \textcolor{black}{bounds of Theorem~\ref{Poisson_regular} (stated in the above Lemma~\ref{PT_GR_BD})}, we get:
\begin{equation}\label{Grad_Slutsky}
\widetilde A_n \leq \frac{ q[f]_1^2\nu(\|\sigma\|^2)}{2 \alpha 
}+e_n.
\end{equation}
Let us now handle the remainder $\left [\E  \exp \left (  - \frac{a \widetilde p \lambda}{\sqrt{\Gamma_n} } 
\frac{   \nu_n \left (  \mathcal A \vartheta  \right )    }{\sqrt{  \nu_n \left (   \|\sigma\|^2  \right )  }  +
\sqrt{  \nu\left (   \|\sigma\|^2  \right ) }  } \right )\right ]^{1/\widetilde p}$ :
\begin{eqnarray*}
&&
\left [ \E  \exp \left (  - \frac{a \widetilde p \lambda}{\sqrt{\Gamma_n} } 
\frac{   \nu_n \left (  \mathcal A \vartheta  \right )    }{\sqrt{  \nu_n \left (   \|\sigma\|^2  \right )  }  +
\sqrt{  \nu\left (   \|\sigma\|^2  \right ) }  } \right )\right ]^{1/\widetilde p}
\\
&=& 
\left[\E \left( \exp \left [  - \frac{a \widetilde p \lambda}{\sqrt{\Gamma_n} } 
\frac{   \nu_n \left (  \mathcal A \vartheta  \right )    }{\sqrt{  \nu_n \left (   \|\sigma\|^2  \right )  }  +
\sqrt{  \nu\left (   \|\sigma\|^2  \right ) }  } \right )  \left ( \mbox{\bf 1}_{\nu_n  ( \mathcal A  \vartheta   ) \geq 0}+ \mbox{\bf 1}_{\nu_n  ( \mathcal A  \vartheta   ) < 0} \right ) \right )\right]^{1/\widetilde p}
\\
&\leq&
\Bigg( \left [\E  \exp \left (   \frac{a \widetilde p^2 \lambda}{\sqrt{\Gamma_n} } 
\frac{   \nu_n \left (  \mathcal A \vartheta  \right )    }{
\sqrt{  \nu\left (   \|\sigma\|^2  \right ) }  } \right )  \right ]^{1/\widetilde p} \P\big[\nu_n  ( \mathcal A  \vartheta   ) \geq 0\big]^{1/\widetilde q}
\\
&&+\left[\E  \exp \left (  - \frac{a \widetilde p^2 \lambda}{\sqrt{\Gamma_n} } 
\frac{   \nu_n \left (  \mathcal A \vartheta  \right )    }{
\sqrt{  \nu\left (   \|\sigma\|^2  \right ) }  } \right )  \right]^{1/\widetilde p} \P\big[\nu_n  ( \mathcal A  \vartheta   ) < 0\big]^{1/\widetilde q}\Bigg)^{1/\widetilde p}.
\end{eqnarray*}
Let us mention that we introduced the above partition in order to get a sharper constant in the final inequality, 1 below instead of 2, which would follow getting rid of the indicator functions.
Now, by Theorem ~\ref{ineq_fin} we easily get:
\begin{eqnarray}\label{slutsky_ineq_final}
&&\E \left [ \exp \left (  - \frac{a \widetilde p \lambda}{\sqrt{\Gamma_n} } 
\frac{   \nu_n \left (  \mathcal A \vartheta  \right )    }{\sqrt{  \nu_n \left (   \|\sigma\|^2  \right )  }  +
\sqrt{  \nu\left (   \|\sigma\|^2  \right ) }  } \right )\right ]^{1/\widetilde p}
\nonumber \\
&&\leq
\mathscr R_n \exp \left(  \frac{a^2\widetilde p^2 }{\textcolor{black}{\Gamma_n \nu(\|\sigma\|^2})} \frac{  \lambda^2}{\Gamma_n }  \Big( \frac{\|\sigma \|_\infty^2 \|\nabla \vartheta \|_\infty^2}2+ e_n\Big) \right )
\left [
\P\big[\nu_n  ( \mathcal A  \vartheta   ) \geq 0\big]^{1/\widetilde q}
+ \P\big[\nu_n  ( \mathcal A  \vartheta   ) < 0\big]^{1/\widetilde q}
\right ]^{1/\widetilde p}.
\end{eqnarray}
We choose $\widetilde p:= \widetilde p(n) \to + \infty$, such that $ \frac{\widetilde p^2a^2 }{\Gamma_n } \to 0$, and so $(\P\big[\nu_n  ( \mathcal A  \vartheta   ) \geq 0\big]^{1/\widetilde q}
+ \P\![\nu_n  ( \mathcal A  \vartheta   ) < 0
]^{1/\widetilde q})^{1/\widetilde p} \le 2^{1/\widetilde p}\to1$. 
Moreover, exploiting again that for the \textit{Gaussian regime}, $\frac{\widetilde p^2a^2 }{\Gamma_n }  \to 0$, we obtain by~\eqref{slutsky_ineq_final} and~\eqref{Tcheb_Slutsky}:
\begin{eqnarray}\label{Tcheb_Slutsky_DEF}
&&\P \left [\sqrt{\Gamma_n} \frac{\nu_n(f)-\nu(f)}{\sqrt{\nu_n \left ( \|\sigma\|^2  \right )   } } \geq a \right ]
\leq  \mathscr R_n  \exp \left (  - \frac{a\lambda}{\sqrt{\Gamma_n} }\sqrt{  \nu\left (   \|\sigma\|^2  \right ) } \right )   \exp \left ( \frac{\rho 
\widetilde q \lambda^2}{\Gamma_n} (\widetilde A_n+e_n)  +  \frac{\rho^3 
\widetilde q^3 \lambda^4}{(\rho-1)\Gamma_n}\widetilde B_n\right ) .
\end{eqnarray}
From identity~\eqref{Tcheb_Slutsky_DEF}, the optimization over $\lambda$ is similar to the one performed in the proof of Theorem~\ref{THM_COBORD}. This yields the deviation bound~\eqref{DEV_SLUTSKY}. The non-asymptotic confidence interval in~\eqref{NA_SLUTSKY} is derived as for Theorem~\ref{NA_CI_FIRST} from \textcolor{black}{the gradient bounds of Theorem~\ref{Poisson_regular}
} and~\eqref{DEV_SLUTSKY}. \hfill $\qed $

\subsection{Regularization of Lipschitz Sources}\label{REG_LIP_SOURCE}
We assume here that assumptions \A{C2}, \A{${\mathbf {\mathcal L}}_{{\mathbf V}} $}, \A{UE} 
are in force. We suppose as well that the following smoothness holds for $b,\sigma$:
\begin{trivlist}
\item[\A{R${}_{b,\sigma} $}] Regularity and Structure. We assume that there exists $\beta\in (0,1) $ such that $b,\sigma $ in~\eqref{eq_diff} belong to ${\mathcal C}^{1,\beta}(\R^d,\R^d)$ and ${\mathcal C}_b^{1,\beta}(\R^d,\R^{d}\otimes \R^d ) $ respectively. Also, for all $(i,j)\in \leftB 1,d\rightB^2 $, $\Sigma_{i,j}(x)=\Sigma_{i,j}(x_{i\wedge j},\cdots,x_d) $.
\end{trivlist}
Importantly, we are interested, under assumptions \A{C2}, \A{${\mathbf {\mathcal L}}_{{\mathbf V}} $}, \A{UE}, \A{R${}_{b,\sigma} $}, in giving controls for the estimation of $\nu(f) $ when the source $f$ is simply Lipschitz continuous. This is indeed the natural framework for the source which can be handled through functional inequality techniques, see \cite{MalrieuTalay}, \cite{bois:11}.  

To \textcolor{black}{comply with} our previous framework, namely to exploit the smoothness result of Theorem~\ref{Poisson_regular} \textcolor{black}{under \A{C${}_{\rm \mathbf{UE}}$}}, we need to regularize the source. Let $\eta $ be a mollifier (i.e. a non-negative compactly supported function such that $\int_{\R^d} \eta(x)dx=1 $). Define for $\delta>0 $, $\eta_\delta(x)=\frac{1}{\delta^{d}}\eta(\frac{x}{\delta} ) $. We regularize $f$ introducing $f_\delta:=f\star \eta_\delta $ where $\star  $ stands for the convolution on $\R^d$. From usual estimates, we obtain:
\begin{eqnarray}
\label{CTR_REG_DELTA}
\exists \,C_\eta>0,\ \forall x\in \R^d,&& |f_\delta(x)-f(x)|\le C_\eta \delta [f]_1,\nonumber\\
\forall \beta\in (0,1),&&  [\nabla f_\delta]_\beta \le C_\eta [f]_1\delta^{-\beta}. 
\end{eqnarray}
We emphasize here that we will choose $\beta $ later in order to be compatible with a certain range of step sequences.
We assume for simplicity that $\theta\in (1/3,1] $ (no bias). Recall that we want to investigate:
\begin{eqnarray}
\label{DEV_REG_DELTA}
\P[\sqrt{\Gamma_n} (\nu_n(f)-\nu(f))\ge a]&=&\P\Big[(\nu_n(f_\delta)-\nu(f_\delta))+R_{n,\delta}(f)\ge \frac a{\sqrt{\Gamma_n} }\Big],\nonumber\\
R_{n,\delta}(f)&:=&[(\nu_n(f)-\nu(f))-(\nu_n(f_\delta)-\nu(f_\delta))].
\end{eqnarray}
From~\eqref{CTR_REG_DELTA}, one readily gets:
\begin{equation}
\label{CTR_RN_DELTA}
|R_{n,\delta}(f)|\le 2C_\eta \delta [f]_1.
\end{equation}
On the other hand, the coefficients $b,\sigma $ and the source $f_\delta$ satisfy assumption \A{R${}_{1,\beta}$}
 (observe indeed that the mollified function $f_\delta\in C^{1,\beta}(\R^d,\R)$). Hence, Theorem~\ref{Poisson_regular} yields that there exists a unique solution $\varphi_\delta\in C^{3,\beta}(\R^d,\R) $ to the equation:
\begin{equation}
\label{POISS_DELTA}
{\mathcal A} \varphi_\delta=f_\delta-\nu(f_\delta).
\end{equation}
Observe from the proof of Theorem~\ref{Poisson_regular} \textcolor{black}{under \A{C${}_{\rm \mathbf{UE}}$}} (see equations~\eqref{BD_GRAD_POISS} and \textcolor{black}{\eqref{SCHAUDER_2}}) and~\eqref{CTR_REG_DELTA}  that:
\begin{equation}
\label{BD_POISS_REG}
\begin{split}
&\|\nabla \varphi_\delta\|_\infty\le \alpha^{-1}[f]_1,\  \forall \beta\in (0,1),\ \exists \,C_\beta>0,\ \forall i\in \{1,2\},\ [ \varphi_\delta^{(i)}]_1\le C_\beta(1+\|\nabla f_\delta\|_{{\mathcal C}^\beta})\le C_\beta\delta^{-\beta},\ [\varphi_\delta^{(3)} ]_\beta\le  C_\beta\delta^{-\beta},\\
&
 \textcolor{black}{[\langle \nabla \varphi_\delta, b \rangle]_1\le C_\beta\delta^{-\beta}}.
\end{split}
\end{equation}
Now, from~\eqref{POISS_DELTA} the deviation in~\eqref{DEV_REG_DELTA} rewrites:
\begin{equation}
\P\big[\sqrt{\Gamma_n} (\nu_n(f)-\nu(f))\ge a\big]=\P\Big[\nu_n({\mathcal A} \varphi_\delta)+R_{n,\delta}(f)\ge \frac{a}{\sqrt{\Gamma_n}}\Big].
\end{equation}
From~\eqref{CTR_RN_DELTA}, the term $R_{n,\delta}(f) $ can be seen as a remainder as soon as $\frac{a}{\sqrt{\Gamma_n}}\gg 2C_\eta \delta [f]_1  \ge |R_{n,\delta}(f)| $. 
On the other hand, the deviations of $\nu_n({\mathcal A}\varphi_\delta) $ can be analyzed as above, reproducing the proofs of Theorems~\ref{ineq_fin} and~\ref{THM_COBORD}, replacing the bounds on $ ([\varphi^{(i)}]_1)_{i\in \{1,2\} }, [\varphi^{(3)}]_\beta $ appearing therein by those of equation~\eqref{BD_POISS_REG}. Precisely, we get from~\eqref{CTR_RN_DELTA}, similarly to~\eqref{BD_P_THM_2} (replacing the controls on $\varphi $ by those on $\varphi_\delta $ in the proofs of Lemmas~\ref{CTR_BIAS_1} and~\ref{ineq_rest}):
 \begin{eqnarray}
\P\left[ \Big|\nu_n( \mathcal{A} \varphi_\delta )+R_{n,\delta}(f)\Big| \geq \frac a{\sqrt{\Gamma_n}} \right] \leq  
2
\Bigg[ \E\exp\Big(\textcolor{black}{-\frac{q\lambda_n}{\Gamma_n}} M_n \Big)\Bigg]^\frac{1}{q}\exp\Big(-\frac{a \lambda_n}{\sqrt {\Gamma_n}}(1-\frac{\sqrt{\Gamma_n}2C_\eta [f]_1\delta}{a}) \Big) \notag\\
\times \exp\Big(\frac{\lambda_n^2}{2\Gamma_np}+\frac{p(a_n^\delta)^2}{2}\Big)  \exp\left(\frac{3 p C_{V,\varphi}^2 \lambda_n^2}{ c_V \Gamma_n^2}+\frac{c_V}{p}\right)(I_V^1)^{\frac 1{2p}}\notag\\
\times\exp\left ( C_{\ref{ineq_rest_b}}^{\delta}\frac{p \lambda_n^2  (\Gamma_n^{(2)})^2}{  \Gamma_n^2}\right) (I_V^1)^{\frac 1{4p}}\times \exp\left(C_{\ref{ineq_rest_b2_phi}}^{\delta}
\textcolor{black}{\frac{p\lambda_n^2 (\Gamma_n^{(2)})^2}{\Gamma_n^2}}
\right)(I_V^1)^{\frac1{4p}}\label{DEV_PROV_LIP}
\end{eqnarray}
where $C_{\ref{ineq_rest_b2_phi}}^\delta:= C_\beta\delta^{-\beta }\textcolor{black}{
\frac{(C \sqrt{C_{_V}})^2}{c_V}
}$ and $C_{\ref{ineq_rest_b}}^\delta:=\frac{\|\sigma\|_\infty^4C_\beta^2\delta^{-2\beta}}{4}\frac{C_{_V}}{c_V} $
precisely correspond to the modifications of the constants $C_{\ref{ineq_rest_b2_phi}}$ and $C_{\ref{ineq_rest_b}}:=\frac{\|\sigma\|_\infty^4[\varphi^{(2)}]_1^2}{4}\frac{C_{_V}}{c_V} $ introduced in the proof of Lemma~\ref{ineq_rest} when replacing $\|D^2\varphi\|_\infty $ by $\|D^2\varphi_\delta\|_\infty \le C_\beta\delta^{-\beta}$ and  
\textcolor{black}{$[\langle \nabla \varphi, b \rangle]_1\le C $ by $[\langle \nabla \varphi_\delta, b \rangle]_1\le C_\beta\delta^{-\beta}C $}. 
Similarly, 
$$a_n^\delta := \frac{ [\varphi_\delta^{(3)}]_{\beta}  \big\|\sigma\big\|_\infty^{(3+\beta)} \E\big[|U_1|^{3+\beta} \big] }{(1+\beta)(2+\beta)(3+\beta) }   \frac{\Gamma_n^{(\frac{3 +\beta}{2})}}{\sqrt{\Gamma_n}}\le \frac{ C_\beta \delta^{-\beta}  \big\|\sigma\big\|_\infty^{(3+\beta)} \E\big[|U_1|^{3+\beta} \big] }{(1+\beta)(2+\beta)(3+\beta) }   \frac{\Gamma_n^{(\frac{3 +\beta}{2})}}{\sqrt{\Gamma_n}} , $$ is obtained from the definition of $a_n$ in Lemma~\ref{CTR_BIAS_1} replacing $[\varphi^{(3)}]_\beta $ by $[\varphi_\delta^{(3)}]_\beta $. From the above equation and Lemma~\ref{ineq_Psi} we get:
\begin{eqnarray*}
\P\left[|\nu_n(f)-\nu(f)|\ge \frac{a}{\sqrt{\Gamma_n}}\right]=\P\left[ \Big|\nu_n( \mathcal{A} \varphi_\delta )+R_{n,\delta}(f)\Big| \geq \frac a{\sqrt{\Gamma_n}} \right]
\le  2(I_V^1)^{\frac 1p}\exp\Big(\textcolor{black}{\frac{c_V}{p}} 
+\frac{p(a_n^\delta)^2}{2}\Big)\notag\\
\times \exp\left(-\frac{a^2}{2q \|\sigma\|_\infty^2 \|\nabla \varphi\|_\infty^2}\Big(1-\frac{\sqrt{\Gamma_n}4C_\eta [f]_1\delta}{a} -\frac{1}{q\|\sigma\|_\infty^2\|\nabla \varphi\|_\infty^2}\Big\{\frac{p}{\Gamma_n}\Big(\frac{6C_{V,\varphi}^2}{c_V}+\textcolor{black}{2\big[C_{\ref{ineq_rest_b}}^\delta+ C_{\ref{ineq_rest_b2_phi}}^\delta]}(\Gamma_n^{(2)})^2  \Big)+\frac{1}{p}\Big\}\Big)\right).
\end{eqnarray*}
The Young inequality yields that for all $\varepsilon_n>0 $:
\begin{eqnarray}
\P\left[|\nu_n(f)-\nu(f)|\ge \frac{a}{\sqrt{\Gamma_n}}\right]\le 2(I_V^1)^{\frac 1p}\exp\Big(\textcolor{black}{\frac{c_V}{p}} 
+
\frac{p(a_n^\delta)^2}{2}+\varepsilon_n^{-1} \Gamma_n \delta^{2}\Big)\notag\\
\times \exp\left(-\frac{a^2}{2q \|\sigma\|_\infty^2 \|\nabla \varphi\|_\infty^2}\Big(1- \underbrace{\frac{1}{q\|\sigma\|_\infty^2\|\nabla \varphi\|_\infty^2}\Big\{2 \varepsilon_nC_\eta^2[f]_1^2+\frac{p}{\Gamma_n}\Big(\frac{6C_{V,\varphi}^2}{c_V}+\textcolor{black}{2\big[C_{\ref{ineq_rest_b}}^\delta+ C_{\ref{ineq_rest_b2_phi}}^\delta]}(\Gamma_n^{(2)})^2  \Big)+\frac{1}{p}\Big\}}_{=:d_n^\delta}\Big)\right).\notag\\
\label{BD_P_THM_2_F_DELTA}
\end{eqnarray}
We now want to let $p:=p(n)\underset{n}{\rightarrow} +\infty,\  \varepsilon_n\underset{n}{\rightarrow} 0$ so that the associated contributions in the above equation can be viewed as remainders.
From the previous definitions of $C_{\ref{ineq_rest_b2_phi}}^\delta,\ C_{\ref{ineq_rest_b}}^{\delta} $, we see that, to achieve this goal, two constraints need to 
be fulfilled: namely, we must choose $\delta,\ p $ such that
$$
\varepsilon_n^{-1} \Gamma_n \delta^2\underset{n}{\rightarrow} 0\quad\mbox{ and }\quad p(a_n^\delta)^2\underset{n}{\rightarrow} 0.
$$
Now, if $\theta\in (1/2,1] $ there exists $\beta \in (0,1)$ such that $\Gamma^{(\frac{3+\beta}{2})}\le C $. In that case: $a_n^\delta\le \frac{C}{\sqrt{\Gamma_n}}\delta^{-\beta}=\Gamma_n^{-(\frac 12(1-\beta)-\beta\varepsilon)} \underset{n}{\rightarrow} 0 
$ for $\delta=\textcolor{black}{\Gamma_n^{-(\frac 12+\varepsilon)}} 
$ and $\varepsilon<\frac{1-\beta}{2\beta} $. Taking $p:=p(n)=\Gamma_n^{(\frac 12(1-\beta)-\beta\varepsilon)}$ yields $p(a_n^\delta)^2\underset{n}{\rightarrow}0 $. On the other hand, $\varepsilon_n=\Gamma_n^{-\varepsilon} $ also yields $\varepsilon_n^{-1}\Gamma_n \delta^2=\Gamma_n^{-\varepsilon} \underset{n}{\rightarrow} 0 $.

For $\theta\in (1/3,1/2) $, $\Gamma_n^{(\frac{3+\beta}{2})} $ diverges for all $\beta\in (0,1) $, we then have $\frac{\Gamma_n^{(\frac{3+\beta}{2})}}{\sqrt{\Gamma_n}}\le C  n^{\frac 12 -\theta(1+\frac \beta 2)}$. Hence, there exists $\beta\in (0,1) $ such that $\frac{\Gamma_n^{(\frac{3+\beta}{2})}}{\sqrt{\Gamma_n}}\le C  n^{\frac 12 -\theta(1+\frac \beta 2)} \underset{n}{\rightarrow}0$. However, taking $\delta= \textcolor{black}{\Gamma_n^{-(\frac 12+\varepsilon)}}
$, which seems to be an almost ``necessary" choice to satisfy the first constraint $ \varepsilon_n^{-1} \Gamma_n \delta^2\underset{n}{\rightarrow} 0$, yields:
$$a_n^\delta\asymp \delta^{-\beta} \frac{\Gamma_n^{(\frac{3+\beta}{2})}}{\sqrt{\Gamma_n}}\asymp n^{(1+\beta)(\frac 12 -\theta)+\varepsilon \beta(1-\theta)}\underset{n}{\rightarrow}+\infty,$$
so that the second constraint cannot be fulfilled. This means that the regularization induces a constraint on the time steps which must not be too large. In other words, under the sole Lipschitz assumption on the source $f$,  \textcolor{black}{the fastest  convergence regime is out of reach}.

Summing up the previous computations, we complete the proof of Theorem~\ref{THEO_CTR_LIP}.

\mysection{Applications}
\label{APP_SECTIONS}
\subsection{Non-Asymptotic Deviation Bounds in the Almost Sure CLT}
\label{TCLPS_SEC}

Let $(U_n)_{n\ge 1} $ be an i.i.d sequence of centered $d $-dimensional  random variables  with unit covariance matrix. We define the sequence of normalized partial sums by $Z_0=0 $ and
\begin{eqnarray*}
 Z_n := \frac{\sum_{k=1}^n U_k}{\sqrt{n}}, n\ge 1.
\end{eqnarray*}
The almost sure Central Limit Theorem (denoted from now on $a.s.$\hspace{2.5pt}CLT) describes how the weighted sum of the renormalized sums $Z_n $ which appear in the \textit{usual} \textit{asymptotic} CLT, behaves viewed as a random measure. Precisely, it states that setting for $k\ge 1,\ \gamma_k=1/k $:
\begin{equation}
\label{TCL_PS}
\nu_n^Z:=\frac{1}{\Gamma_n}\sum_{k=1}^n \gamma_k \delta_{Z_k} \overset{w,\ a.s.}{\underset{n}{\longrightarrow} } G,\ G(dx):=\exp\Big(-\frac{|x|^2}{2}\Big)\frac{dx}{(2\pi)^{d/2}}.
\end{equation}
The above convergence had been established in~\cite{lamb:page:02}, as a by-product of their results concerning the approximation of invariant distributions, under the minimal moment condition $U_i\in L^2(\P) $, thus weakening the initial assumptions by Brosamler and Schatte (see~\cite{bros:88} and~\cite{scha:88}). The underlying idea is to use a reformulation of the dynamics of $(Z_n)_{n\ge 0} $ in terms of a discretization scheme appearing as a perturbation of~\eqref{scheme}.
One indeed easily checks that, for $n\ge 0$:
\begin{eqnarray}
\label{DEF_ZN}
Z_{n+1} = Z_n - \frac{\gamma_{n+1}}2  Z_n + \sqrt{\gamma_{n+1}} U_{n+1} + r_nZ_n, \ r_{n}:=\sqrt{1-\frac{1}{n+1}} -1 + \frac{1}{2(n+1)} =O\Big(\frac{1}{n^2}\Big).
\end{eqnarray}
Thus, the sequence $(Z_n)_{n\ge 0}$ appears as a perturbed Euler scheme with decreasing step $\gamma_n=\frac 1n $  of the Ornstein-Uhlenbeck process $dX_t=-\frac 12 X_tdt+dW_t$ whose invariant distribution is $G$. Then the regular Euler scheme
\begin{equation}
\label{SCHEME_OU}
X_{n+1} = X_n - \frac{\gamma_{n+1}}2  X_n + \sqrt{\gamma_{n+1}} U_{n+1},
\end{equation}
satisfies~\eqref{CV_ERGO} with $\nu=G $. The $a.s.$ weak convergence~\eqref{TCL_PS} established in~\cite{lamb:page:02} follows as a consequence of the (fast enough) convergence of $Z_n$ towards $X_n$ as $n$ goes to infinity.

Moreover, this rate is fast enough to guarantee that the sequence $\nu_n^Z$  satisfies the conclusion of Theorem~\ref{theo} point \textit{(a)} (when $ \gamma_n=\frac 1n, \frac{\Gamma_n^{(2)}}{\sqrt{\Gamma_n}}\underset{n}{\rightarrow }0$), {\it i.e.} its convergence rate is ruled by a CLT at rate $\sqrt{\log(n)} $. In fact this holds under a lower moment assumption $U_1\in L^3(\P) $.


Let us mention that the convergence rates related to the $a.s.$ CLT had already been investigated by several authors. Let us quote among relevant works, Cs\"org\H{o} and Horv\'ath~\cite{csor:horv:92}, for real valued i.i.d. \!random variables, Cha\^abane and Ma\^aouia~\cite{chaa:maao:00}, who investigate the convergence rate of the strong quadratic law of large numbers for some extensions to vector-valued martingales,  and Heck~\cite{heck:98}, for large deviation results. 
As an application of our previous results, we will derive some new non-asymptotic Gaussian deviation bounds for the $a.s.$ CLT, when the involved random variables $(U_n)_{n\ge 1} $  satisfy \A{GC}.
We insist here that the sub-Gaussianity of the innovations is crucial to get a non-asymptotic Gaussian deviation bound. 
{The result  readily extends to the wider class of  innovations satisfying the general sub-Gaussian exponential deviation inequality~\eqref{GEN_GC}. Also, we slightly  weaken  the regularity assumptions needed on the function $f$ in~\cite{lamb:page:02} for the associated $a.s.$ CLT to hold}.

\subsubsection{Non-Asymptotic Deviation Bounds.}
\begin{theo}\label{ineq_TCLps}
Assume the innovation sequence $(U_n)_{n \geq 1}$ satisfies \A{GC} and let $f$ be a Lipschitz continuous function such that  $G(f)=\int_{\R^d}f(x)G(dx)=0 $. Then, 
there exist two explicit monotonic sequences 
$c_n\le 1\le C_n,\ n\ge 1$,   with $\lim_n C_n = \lim_n c_n =1$
such that for all  $a >0$ and $n\ge 1$:
\begin{eqnarray}
\P\Big[\sqrt{\log( n)+1} |\nu_n^Z(f)| \geq a   \Big] &\leq& 2 C_n \exp\left( - c_n \frac{a^2}{2\|\nabla \varphi\|_\infty^2} 
\right),
\end{eqnarray}
where $\varphi$ denotes the solution of the Poisson equation:
\begin{equation}
\label{POISSON_BIS}
\forall x \in \R^d,\ \frac{1}{2} \Delta \varphi(x) - \frac 12 x \cdot \nabla \varphi(x) = f(x),
\end{equation}
which, under the current assumptions,  is unique and belongs to $W_{p,{\rm loc}}^2(\R^d,\R)$, for any $p>1$, with $\|\nabla \varphi\|_\infty\le 2[f]_1 $. 
\end{theo}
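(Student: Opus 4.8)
\textbf{Proof strategy for Theorem~\ref{ineq_TCLps}.}
The plan is to transfer the non-asymptotic Gaussian concentration bound of Theorem~\ref{ineq_fin} from the genuine Euler scheme $(X_n)_{n\ge 0}$ of~\eqref{SCHEME_OU} to the sequence $(Z_n)_{n\ge 0}$ defined by~\eqref{DEF_ZN}, treating the extra term $r_nZ_n$ as a controllable perturbation. First I would verify that the framework of Theorem~\ref{ineq_fin} applies to the Ornstein--Uhlenbeck dynamics $dX_t=-\tfrac12 X_t\,dt+dW_t$: the drift $b(x)=-\tfrac12 x$ and $\sigma=I_d$ satisfy \A{C2} with $\kappa=d$, and $V(x)=1+|x|^2$ is a Lyapunov function verifying \A{${\mathcal L}_V$}, so \A{A} holds (with $\gamma_k=1/k$, which for $k$ large is of the form $k^{-\theta}$, $\theta=1$, hence in the unbiased range $\theta\in(\tfrac1{2+\beta},1]$ for every $\beta$; one may need to re-index or rescale $\gamma_k$ by a constant to fit \A{S}, which does not affect the limit). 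The invariant measure is $\nu=G$. Since $f$ is Lipschitz continuous with $G(f)=0$, the Poisson equation $\tfrac12\Delta\varphi-\tfrac12 x\cdot\nabla\varphi=f$ is exactly~\eqref{POISSON} for this generator; by Pardoux--Veretennikov~\cite{pard:vere:01} it admits a unique solution in $H^1(\R^d,\R)$, and the explicit Gaussian semigroup $P_tf(x)=\E[f(e^{-t/2}x+\sqrt{1-e^{-t}}\,G)]$ together with the representation~\eqref{REP_POISSON} gives the pointwise gradient bound $\|\nabla\varphi\|_\infty\le 2[f]_1$ (here the Bakry--\'Emery criterion \A{BE${}_\rho$} holds with $\rho=\tfrac12$ and $\Sigma=I_d$, so Lemma~\ref{PT_GR_BD} applies with $\alpha=\rho=\tfrac12$, i.e. $\|\nabla\varphi\|_\infty\le[f]_1/\alpha=2[f]_1$). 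The normalization $\sqrt{\Gamma_n}$ is here $\sqrt{\sum_{k=1}^n 1/k}=\sqrt{\log n+O(1)}$, matching the stated $\sqrt{\log(n)+1}$ up to the constants absorbed in $c_n,C_n$.

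The core step is to compare $\nu_n^Z({\mathcal A}\varphi)$ with the quantity $\nu_n({\mathcal A}\varphi)$ governed by Theorem~\ref{ineq_fin}, or rather to run the martingale decomposition of Section~\ref{SEC_STRAT} directly on $(Z_n)$. Writing a Taylor expansion of $\varphi(Z_k)-\varphi(Z_{k-1})$ along the dynamics~\eqref{DEF_ZN}, I would obtain, analogously to Lemma~\ref{decomp_nu},
\begin{eqnarray*}
\varphi(Z_n)-\varphi(Z_0)=\Gamma_n\nu_n^Z({\mathcal A}\varphi)+M_n^Z+\widetilde R_n+\sum_{k=1}^n \big\langle \nabla\varphi(Z_{k-1}),r_{k-1}Z_{k-1}\big\rangle + (\text{2nd order terms in }r_{k-1}Z_{k-1}),
\end{eqnarray*}
where $M_n^Z$ is a martingale with the same conditional Lipschitz structure as in~\eqref{INTRO_MART} (the innovation enters only through the $\sqrt{\gamma_k}U_k$ term, exactly as before), $\widetilde R_n$ collects the same remainders $D_{2,b,n},D_{2,\Sigma,n},\bar G_n,L_n$ as in~\eqref{DECOUP}, and the last two sums are the genuinely new perturbation terms. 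The martingale term is handled verbatim by Lemma~\ref{ineq_Psi}, giving the Gaussian bound with variance proxy $\|\nabla\varphi\|_\infty^2$ (and $\|\sigma\|_\infty^2=1$). The classical remainders $\widetilde R_n$ are controlled by Lemmas~\ref{CTR_BIAS_1},~\ref{ineq_phi},~\ref{ineq_rest}, provided one has the analogue of Proposition~\ref{expV_int}, i.e. $\sup_n\E[\exp(\lambda|Z_n|^2)]<+\infty$ for $\lambda$ small — this is where I would use that $Z_n$ is a normalized sum of sub-Gaussian i.i.d. increments (under \A{GC}), so $|Z_n|^2$ is sub-exponential uniformly in $n$, which is even more elementary than the induction in Proposition~\ref{expV_int}; alternatively one redoes that induction along~\eqref{DEF_ZN} noting $|r_{k-1}|=O(k^{-2})$ is summable.

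The new perturbation sums are the main obstacle, though a mild one: since $|r_{k-1}|=O(k^{-2})$, $\|\nabla\varphi\|_\infty\le 2[f]_1<\infty$, and $Z_{k-1}$ has uniformly bounded exponential moments of $|Z_{k-1}|$, the term $\sum_{k=1}^n\langle\nabla\varphi(Z_{k-1}),r_{k-1}Z_{k-1}\rangle$ is, in absolute value, bounded by $2[f]_1\sum_{k\ge 1}|r_{k-1}||Z_{k-1}|$, which is a.s. finite and has finite exponential moments; after dividing by $\Gamma_n=\log n+O(1)\to\infty$ its contribution to the deviation is a remainder that merges into the sequences $c_n\to1$, $C_n\to1$ exactly as ${\mathscr R}_n$ does in the proof of Theorem~\ref{ineq_fin} (one inserts it into the H\"older splitting~\eqref{EXPLI_DEV} with a conjugate exponent $p_n\to\infty$). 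The quadratic-in-$r$ terms are even smaller ($\sum_k r_{k-1}^2|Z_{k-1}|^2<\infty$ a.s. with exponential moments) and likewise negligible after normalization. Thus, reproducing the optimization over $\lambda$ and the choice $p:=p_n\to+\infty$ (hence $q_n\to1$) carried out at the end of the proof of Theorem~\ref{ineq_fin}, and using symmetry for the two-sided bound, yields
\begin{eqnarray*}
\P\big[\sqrt{\log(n)+1}\,|\nu_n^Z(f)|\ge a\big]=\P\big[\sqrt{\Gamma_n}\,|\nu_n^Z({\mathcal A}\varphi)|\ge a\big]\le 2C_n\exp\Big(-c_n\frac{a^2}{2\|\nabla\varphi\|_\infty^2}\Big),
\end{eqnarray*}
which is the claim. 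The only genuinely delicate point is bookkeeping: ensuring that the perturbation remainders are absorbed without spoiling the asymptotics $c_n\uparrow1$, $C_n\downarrow1$, which follows because their exponential moments are bounded uniformly in $n$ while they are divided by $\Gamma_n\to+\infty$.
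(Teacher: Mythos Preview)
Your approach is genuinely different from the paper's and is in principle viable, but there is one real gap.

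\textbf{The gap: regularity of $\varphi$.} You invoke Lemma~\ref{decomp_nu} and then Lemmas~\ref{CTR_BIAS_1},~\ref{ineq_phi},~\ref{ineq_rest} on $\varphi$ directly. Those lemmas require $\varphi\in{\mathcal C}^{3,\beta}(\R^d,\R)$: the remainder $\bar G_n$ involves $D^3\varphi$, and $D_{2,b,n},D_{2,\Sigma,n}$ need $D^2\varphi$ Lipschitz. For a merely Lipschitz source $f$, the Ornstein--Uhlenbeck Poisson solution $\varphi(x)=-\int_0^\infty P_tf(x)\,dt$ satisfies $\|\nabla\varphi\|_\infty\le 2[f]_1$, but the higher derivatives obtained from Gaussian smoothing behave like $\|\nabla^kP_tf\|_\infty\lesssim t^{-(k-1)/2}$ near $t=0$, so the time integral for $\nabla^3\varphi$ diverges and you cannot assert $\varphi\in{\mathcal C}^{3,\beta}$. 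The paper handles this by mollifying $f$ to $f_\delta$ (Section~\ref{REG_LIP_SOURCE}), solving for $\varphi_\delta\in{\mathcal C}^{3,\beta}$, and carrying the mollification error $R_{n,\delta}(f)$ and the $\delta$-dependent constants $C_{\ref{ineq_rest_b}}^\delta,C_{\ref{ineq_rest_b2_phi}}^\delta,a_n^\delta$ through the argument, choosing $\delta=\delta(n)\to0$ at the end. Without this step your Taylor expansion and remainder controls are not justified.

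\textbf{Comparison of routes.} The paper does \emph{not} run the decomposition on $(Z_n)$ directly. Instead it couples $Z$ to the unperturbed Euler scheme $X$ via $\Delta_n=Z_n-X_n$ and writes
\[
\nu_n^Z(f)=\frac{1}{\Gamma_n}\sum_{k=1}^n\gamma_k\big(f(Z_{k-1})-f(X_{k-1})\big)+\nu_n^X({\mathcal A}\varphi_\delta)+R_{n,\delta}(f),
\]
bounding the first sum by $[f]_1\,\nu_n^\Delta(|\cdot|)$ and controlling this via a dedicated Lemma (the paper's Lemma~\ref{LEMME_TEC}) that exploits the explicit recursion $\Delta_{n+1}=(1-\gamma_{n+1}/2)\Delta_n+r_nZ_n$ together with the Wallis-type estimate $\rho_n\sim\sqrt{\pi n}$; the $\nu_n^X$ part is then treated by the existing machinery for the clean scheme plus the Lipschitz regularization of Section~\ref{REG_LIP_SOURCE}. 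Your alternative---expanding $\varphi(Z_k)-\varphi(Z_{k-1})$ and isolating the $r_{k-1}Z_{k-1}$ contributions---would work once you insert the mollification step, and has the merit of avoiding the auxiliary coupling; but it forces you to redo Proposition~\ref{expV_int} and the remainder lemmas along the perturbed dynamics, with cross terms between $r_{k-1}Z_{k-1}$ and $\sqrt{\gamma_k}U_k$ in the second-order expansion that are not quite ``verbatim''. The paper's route reuses the existing lemmas for $X$ unchanged and packages the entire perturbation into the single scalar quantity $\nu_n^\Delta(|\cdot|)$.
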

\noindent\textit{Proof.}
For $(Z_n)_{n\ge 0} $  as in~\eqref{DEF_ZN}, and $(X_n)_{n\ge 0}$ as in~\eqref{SCHEME_OU} we introduce:
\begin{eqnarray*}
\Delta_n &:= &Z_n - X_n.
\end{eqnarray*}
With the definition of $\nu_n^Z $ in~\eqref{TCL_PS}, write $ \nu_n^Z (f)=  \frac{1}{\Gamma_n} \sum_{k=1}^n \gamma_k f(Z_{k-1})$. We also have similarly $\nu_n^X(f):=\frac{1}{\Gamma_n} \sum_{k=1}^n \gamma_k f(X_{k-1}) $. For all $\lambda >0$, we derive similarly to~\eqref{DECOUP_TCHEB} \textcolor{black}{(see as well~\eqref{DEV_PROV_LIP}) and with the notations of~\eqref{DEV_REG_DELTA}}:
\begin{eqnarray}
\P\big[\sqrt{\Gamma_n} |\nu_n^Z(f)|  \geq a \big] &=& \P\Big[ \sqrt{\Gamma_n}\Big|\frac{1}{\Gamma_n} \sum_{k=1}^n \gamma_k \big( f(Z_{k-1}) - f(X_{k-1}) \big) +  \nu_n^X (f) \Big| \geq a  \Big]
\nonumber\\
&\leq & \P\Big[ \sqrt{\Gamma_n}\Big|\frac{1}{\Gamma_n} \sum_{k=1}^n \gamma_k \big( f(Z_{k-1}) - f(X_{k-1}) \big) +  \nu_n^X ({\mathcal A}\varphi_\delta)+R_{n,\delta}(f) \Big| \geq a  \Big]
\nonumber\\
&\leq & 2\exp\Big( - \frac{\lambda a}{\sqrt{\Gamma_n}} \Big(1 -\frac{2\sqrt{\Gamma_n}C_\eta[f]_1\delta}a\Big) \Big) \left(\E \exp\Big( \bar p[f]_1 \lambda \nu_n^{\Delta}(|\cdot|)\Big)\right)^{\frac 1{\bar p}}\left( \E\exp\Big( \textcolor{black}{-\frac{q \bar q \lambda}{\Gamma_n} M_n}  \Big)\right)^{\frac 1{q\bar q}}\nonumber\\
&&\times \left(\E \exp\Big( \frac{2p \bar q\lambda}{ \Gamma_n}(|L_n|+|\bar G_n|)\Big)\right)^{\frac 1{2p\bar q}}\left(\E\exp\Big( \frac{4p\bar q \lambda}{ \Gamma_n}|D_{2,b,n}|\Big)\right)^{\frac 1{4p\bar q}}\left(\E\exp\Big( \frac{4p \bar q \lambda}{ \Gamma_n}|D_{2,\Sigma,n}|\Big)\right)^{\frac1{4p\bar q}} \label{ineq_WITH_Delta}
\end{eqnarray}
for $\bar q, q\in (1,+\infty$), $\bar p=\frac{\bar q}{\bar q-1},p=\frac{q}{q-1} $. Also, $\varphi_\delta $ corresponds to the solution of the Poisson equation~\eqref{POISSON_BIS}
 obtained replacing $f$ by its mollified version $f_\delta $.
Now, we need the following lemma to control $\nu_n^{\Delta}(|\cdot|):=\frac{1}{\Gamma_n}\sum_{k=1}^n \gamma_k |\Delta_{k-1}| $.

\begin{lemme}\label{LEMME_TEC}
There is a non-negative constant $C_{\ref{ineq_Delta}}$  such that for all $\lambda>0 $:
\begin{equation}\label{ineq_Delta}
\E\exp\Big(\lambda\nu_n^\Delta (|\cdot|) \Big)=\E\exp\Big(\frac{\lambda}{\Gamma_n} \sum_{k=1}^n \gamma_k |\Delta_{k-1}| \Big)  \le \exp\left( \frac{C_{\ref{ineq_Delta}} \lambda \E\,[|U_1|] \Gamma_n^{(\frac{3}{2})}}{\Gamma_n}+ \frac{C_{\ref{ineq_Delta}}^2 \lambda^2 \Gamma_n^{(3)}}{2\Gamma_n^2}\right).
\end{equation}
\end{lemme}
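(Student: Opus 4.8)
\textbf{Proof plan for Lemma~\ref{LEMME_TEC}.}
The plan is to exploit the explicit linear recursion governing $\Delta_k=Z_k-X_k$ and to reduce everything to independent, sub-Gaussian contributions of the innovations. Subtracting \eqref{SCHEME_OU} from \eqref{DEF_ZN}, with the natural choice $X_0=Z_0=0$ so that $\Delta_0=0$ (a nonzero $\Delta_0$ would only add a deterministic $O(1/\Gamma_n)$ term), one gets, for $k\ge 1$,
\[
\Delta_k=\Big(1-\frac{\gamma_k}{2}\Big)\Delta_{k-1}+r_{k-1}Z_{k-1},
\]
and since $0<1-\gamma_j/2<1$ (as $\gamma_j=1/j\le 1$), iterating gives
\[
\Delta_{k-1}=\sum_{\ell=1}^{k-1}\Pi_{\ell+1,k-1}\,r_{\ell-1}\,Z_{\ell-1},\qquad \Pi_{a,b}:=\prod_{j=a}^b\Big(1-\frac{\gamma_j}{2}\Big),\quad \Pi_{a,b}:=1\text{ if }a>b.
\]

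First I would record three elementary inputs: (i) the product decay $\Pi_{\ell+1,k-1}\le C(\ell/k)^{1/2}$, which follows from $\sum_{j=\ell+1}^{k-1}\log(1-\tfrac1{2j})\le-\tfrac12\sum_{j=\ell+1}^{k-1}\tfrac1j\le-\tfrac12\log(k/\ell)+C$; (ii) $|r_{\ell-1}|\le C\ell^{-2}$, from the Taylor expansion $\sqrt{1-x}=1-\tfrac x2-\tfrac{x^2}{8}+O(x^3)$ with $x=1/\ell$; (iii) the pathwise bound $|Z_{\ell-1}|\le\frac{1}{\sqrt{\ell-1}}\sum_{i=1}^{\ell-1}|U_i|$ for $\ell\ge 2$ (and $Z_0=0$), chosen rather than a concentration bound around $\sqrt d$ so that the mean $\E[|U_1|]$ appears. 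Plugging these into $\Gamma_n\nu_n^\Delta(|\cdot|)=\sum_{k=1}^n\gamma_k|\Delta_{k-1}|$, using the triangle inequality and interchanging the finite summations over $k,\ell,i$, I expect to arrive at
\[
\Gamma_n\,\nu_n^\Delta(|\cdot|)\le C\sum_{i\ge1}|U_i|\sum_{\ell>i}\ell^{-2}\sum_{k>\ell}k^{-3/2}\le C\sum_{i\ge1}i^{-3/2}|U_i|,
\]
where the last step uses $\sum_{k>\ell}k^{-3/2}\le C\ell^{-1/2}$ and then $\sum_{\ell>i}\ell^{-5/2}\le Ci^{-3/2}$. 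The point I want to stress is that it is precisely the decay (i) of $\Pi_{\ell+1,k-1}$ which contributes the extra factor $k^{-1/2}$, hence, after the summation over $k$, the global division by $\Gamma_n$: we obtain $\nu_n^\Delta(|\cdot|)\le\frac{C}{\Gamma_n}\sum_{i=1}^n\gamma_i^{3/2}|U_i|$. Discarding this decay (i.e. using $\Pi\le 1$) would lose the $1/\Gamma_n$ and make the estimate useless as a remainder in \eqref{ineq_WITH_Delta}.

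Finally I would take exponential moments. Since the $(U_i)_{i\ge1}$ are independent and $g(\cdot)=|\cdot|$ is $1$-Lipschitz on $\R^d$, assumption \A{GC} gives $\E\exp(\mu|U_i|)\le\exp(\mu\E[|U_1|]+\mu^2/2)$ for all $\mu>0$; applying it with $\mu=C\lambda\gamma_i^{3/2}/\Gamma_n$ and multiplying over $i$,
\[
\E\exp\big(\lambda\nu_n^\Delta(|\cdot|)\big)\le\prod_{i=1}^n\exp\Big(\frac{C\lambda\gamma_i^{3/2}}{\Gamma_n}\E[|U_1|]+\frac{C^2\lambda^2\gamma_i^3}{2\Gamma_n^2}\Big)=\exp\Big(\frac{C\lambda\,\E[|U_1|]\,\Gamma_n^{(3/2)}}{\Gamma_n}+\frac{C^2\lambda^2\,\Gamma_n^{(3)}}{2\Gamma_n^2}\Big),
\]
which is the claim with $C_{\ref{ineq_Delta}}=C$. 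The only genuinely technical step is the product-decay estimate (i) together with the bookkeeping of the nested series; the rest is a routine combination of the triangle inequality, Fubini for sums, independence, and \A{GC}.
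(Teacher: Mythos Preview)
Your proposal is correct and follows essentially the same route as the paper's proof: the linear recursion for $\Delta_k$, the product decay $\Pi_{\ell+1,k-1}\le C(\ell/k)^{1/2}$, the bound $|r_m|\le C m^{-2}$, the Fubini interchange yielding $\nu_n^\Delta(|\cdot|)\le \frac{C}{\Gamma_n}\sum_{l}l^{-3/2}|U_l|$, and finally \A{GC} applied to the independent $|U_l|$. The only cosmetic difference is that the paper encodes the product via $\rho_n=\prod_{k=1}^n(1-\gamma_k/2)^{-1}$ and invokes the Wallis formula $\rho_n\sim\sqrt{\pi n}$, whereas you obtain the same $\sqrt{\ell/k}$ decay directly from $\log(1-x)\le -x$ and a harmonic-sum comparison; both lead to the identical pathwise bound and the same final estimate.
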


For clarity, we postpone the proof to the end of the current section.

On the other hand, from Section~\ref{REG_LIP_SOURCE} we have that $\varphi_\delta\in {\mathcal C}^{3,\beta}(\R^d,\R) $ 
for all $\beta\in (0,1) $. We derive from~\eqref{ineq_WITH_Delta},~\eqref{ineq_Delta} similarly to the proof of Theorem~\ref{THEO_CTR_LIP} by setting $\bar \lambda_n:=\frac{a\sqrt{\Gamma_n}}{q\bar q \|\nabla \varphi\|_\infty^2} $:
\begin{eqnarray*}
\P\big[\sqrt{\Gamma_n} |\nu_n^Z(f)|  \geq a  \big]&\le&2 \exp\Big( - \frac{a^2  
}{2q\bar q\|\nabla \varphi\|_\infty^2}\Big(1-\frac{\textcolor{black}{4 \sqrt{\Gamma_n}}C_\eta [f]_1  \delta}{a}\Big)  \Big)\exp\Big( \frac{C_{\ref{ineq_Delta}}\bar \lambda_n [f]_1 \E\,[|U_1|] \Gamma_n^{(\frac 32)}}{\Gamma_n} \Big)\\
&&\exp\Big(\frac{C_{\ref{ineq_Delta}}^2\bar p [f]_1^2\bar \lambda_n^2 \Gamma_n^{(3)}}{2\Gamma_n^2}\Big)
(I_V^1)^{\frac {1}{p\bar q}}\exp\Big(\frac{1}{p\bar q}\big( c_V+\frac{C_{\ref{ineq_rest_b2_phi}}^\delta }{2}\big) +\frac{(pa_n^\delta)^2}{2\bar q}\Big)
\\
&&\times  
\exp\Big( \bar \lambda_n^2  \Big(p\bar q \Big(\frac{3C_{V,\varphi}^2}{c_V \Gamma_n^2}+\big[C_{\ref{ineq_rest_b}}^\delta+\frac 32 C_{\ref{ineq_rest_b2_phi}}^\delta \big] \frac{(\Gamma_n^{(2)})^2}{\Gamma_n^2}\Big)+\frac{1}{2p\bar q}\Big)\Big)\\
&\le &2 (I_V^1)^{\frac {1}{p\bar q}}\exp\Big(\frac{1}{p\bar q}\big( c_V+\frac{C_{\ref{ineq_rest_b2_phi}}^\delta}{2}\big) +\frac {p(a_n^\delta)^2}{2\bar q}+\textcolor{black}{\varepsilon_n^{-1}
\Gamma_n \delta^2
}\Big)\\
&&\times \exp\left( - \frac{a^2}{2 q\bar q \|\nabla \varphi\|_\infty^2}\Big(1-d_n^\delta- \frac{\bar p}{q\bar q\|\nabla \varphi\|_\infty^2}\frac{ [f]_1^2C_{\ref{ineq_Delta}}^2 \Big( \Gamma_n^{(3)}+\E\,[|U_1|]^2(\Gamma_n^{(\frac 32)})^2\Big)}{\Gamma_n}\Big) \right),  
\end{eqnarray*}
for $\varepsilon_n>0 $ and $d_n^\delta$ as in~\eqref{BD_P_THM_2_F_DELTA}. Choose again $(p_n)_{n\ge 1} $ and $\delta $ as in Section~\ref{REG_LIP_SOURCE} so that
 $q_n\underset{n}{\rightarrow} 1, d_n^\delta\underset{n}{\rightarrow} 0 $ with the indicated monotonicity for $n$ large enough.
We can now take $\bar p:=\bar p_n\underset{n}{\rightarrow}+\infty $ such that $\frac{\bar p}{\Gamma_n} \underset{n}{\rightarrow}0$.
The above inequality then gives the result up to a direct modification of the sequences $(C_n)_{n\ge 1},  (c_n)_{n\ge 1} $.\hfill $\square $

\subsubsection*{Proof of Lemma~\ref{LEMME_TEC}}

%
%
 The definition of $\Delta_n$ implies:
  \begin{eqnarray*}
 \Delta_{n+1} = \Delta_n \Big( 1- \frac{\gamma_{n+1}}{2}\Big) + r_n Z_n,
 \end{eqnarray*}
where we recall from~\eqref{DEF_ZN} that $r_n:=\sqrt{1-\frac{1}{n+1}} -1 + \frac{1}{2(n+1)} =O(\frac{1}{n^2})$. In particular, there exists $\bar C_{1}>0$ such that for all $n \geq 1 $,
 \begin{equation}\label{rn}
| r_n| \leq \frac{\bar C_1}{n^2}.
\end{equation}

Setting now $\rho_0=1 $ and for $n\ge 1$:
\begin{eqnarray*}
\rho_n := \Big[ \prod_{k=1}^n (1 - \frac{\gamma_k}{2})\Big]^{-1}= \prod_{k=1}^n \frac{2k}{2k-1},
\end{eqnarray*}
a direct  induction on $\Delta_n$ yields:
 \begin{equation}\label{defDelat}
 \Delta_n = \frac{1}{\rho_n} \sum_{k=1}^n r_k \rho_k Z_k
=\frac{1}{\rho_n} \sum_{k=1}^n r_k \rho_k\Big ( \sum_{l=1}^k \frac{U_l}{\sqrt{k}} \Big ) = \frac{1}{\rho_n} \sum_{l=1}^n \Big ( \sum_{k=l}^n \frac{r_k \rho_k}{\sqrt{k}} \Big ) U_l.
 \end{equation}
Also, from the Wallis formula $\rho_n \sim_n \sqrt{\pi n}$, which implies that there exists $\bar C_{2}\ge 1$ such that for all $n \geq 1 $:
\begin{equation}\label{cn}
\bar C_{2}^{-1} \sqrt n\le   \rho_n \le \bar C_{2} \sqrt{n}.
\end{equation} 

We now get from~\eqref{defDelat} and  the Fubini theorem:
\begin{equation}\label{sum_Delta}
\Gamma_n \nu_n^\Delta(|\cdot|)= \sum_{k=1}^n \gamma_k |\Delta_{k-1}| \le  \sum_{k=1}^n \frac{\gamma_k}{\rho_{k-1}} \sum_{l=1}^{k-1} \Big ( \sum_{m=l}^{k-1} \frac{|r_m| \rho_m}{\sqrt{m}} \Big ) |U_l|
 =\sum_{l=1}^{n-1} \Big[ \sum_{k=l+1}^{n}  \frac{\gamma_k}{\rho_{k-1}}  \big ( \sum_{m=l}^{k-1} \frac{|r_m| \rho_m}{\sqrt{m}} \big ) \Big] |U_l|.
 \end{equation}

 Combining~\eqref{rn} and~\eqref{cn}, we get that there exist constants $\bar C_{3},\bar C_{4}>0$ 
 such that for all $k \in \leftB l+1, n \rightB$.
\begin{equation}\label{sum_cn}
\frac{\gamma_k}{\rho_{k-1}}\sum_{m=l}^{k-1} \frac{|r_m| \rho_m}{\sqrt{m}} \leq \frac{\bar C_3}{k^{3/2}l},\  \sum_{k=l+1}^n \frac{\gamma_k}{\rho_{k-1}}\sum_{m=l}^{k-1} \frac{|r_m| \rho_m}{\sqrt{m}}  \leq \frac{\bar C_4}{l^{3/2}}.
\end{equation}
 Plugging this inequality in~\eqref{sum_Delta}, we derive:
 \begin{equation}
\label{CTR_NU_N_DELTA}
 \nu_n^\Delta(|\cdot|)\le \frac{1}{\Gamma_n} \sum_{l=1}^{n-1} \Big [\sum_{k=l+1}^n \frac{\gamma_k}{\rho_{k-1}}\sum_{m=l}^{k-1} \frac{|r_m |\rho_m}{\sqrt{m}} \Big ] |U_l  | \leq \frac{\bar C_4}{\Gamma_n} \sum_{l=1}^{n-1} \frac{|U_l|}{l^{3/2}}.
\end{equation}
For any $\lambda>0$, Equation~\eqref{CTR_NU_N_DELTA} and the Gaussian concentration property \A{GC} of the innovation entail:
\begin{eqnarray*}
\E\exp\Big( \lambda\nu_n^\Delta(|\cdot|)\Big)
&\leq &\prod_{k=1}^{n-1} \E \exp \Big ( \frac{C_4 \lambda}{\Gamma_n k^{\frac{3}{2}}} |U_k| \Big ) 
\leq
\prod_{k=1}^{n-1}\exp \Big ( \frac{\bar C_4 \lambda}{\Gamma_n k^{\frac{3}{2}}} \E\,[|U_1|] + \frac{1}{2} \big(\frac{\bar C_4 \lambda}{\Gamma_n k^{\frac{3}{2}}}\big)^2 \Big )
\\
&
=
&
\exp\Big( \frac{\bar C_4 \lambda \E\,[|U_1|] \Gamma_n^{(\frac{3}{2})}}{\Gamma_n}+ \frac{\bar C_4^2 \lambda^2 \Gamma_n^{(3)}}{2\Gamma_n^2}\Big).
\end{eqnarray*}
This completes the proof.\hfill $\square $


\subsection{Numerical Results}
\label{SEC_NUM}

We present in this section numerical results associated with the computation of the empirical measure $\nu_n $ illustrating our previous theorems.  

\subsubsection{Sub-Gaussian tails}\label{NUM_1}
We first consider $d=r=1$. Also, for simplicity, the innovations $(U_i)_ {i \geq 1}$ and $X_0$ are Bernoulli variables with $\P(U_1=-1)=\P(U_1=-1)=\frac{1}{2}$. 
We illustrate here Theorem~\ref{ineq_fin} taking  
$b(x)=-\frac{x}{2}$, and $\sigma(x)=\cos(x)$ in ~\eqref{eq_diff}.  This is a (weakly) hypoelliptic example. Indeed, setting for $x\in \R,\ X_1(x)=\cos(x)\partial_x $ and $X_0(x)=-\frac x2\partial_x $, we have ${\rm span}\{ X_1, [X_1,X_0]\}=\R $. 
We choose as well to compute $\nu_n({\mathcal A}\varphi) $ for $ \varphi(x) =x+\varepsilon \cos(x)$ for $\varepsilon=0.01 $, and $\varphi\textcolor{black}{(x)}= \cos(x)$. \textcolor{black}{The function $\varphi $ is here given}. 
The assumptions of Theorem~\ref{ineq_fin} follow from Theorem 18 in Rotschild and Stein~\cite{roth:stei:76} (up to the introduction of a suitable partition of unity).
From Theorem~\ref{ineq_fin}, for steps of the form $(\gamma_k)_{k\ge 1}=(k^{-\theta})_{k\ge 1}, \ \theta\in [1/3,1] $ (corresponding to $\beta=1 $ in Theorem~\ref{ineq_fin}), the function $a\in \R^+\mapsto g_{n,\theta}(a):=\begin{cases}
\log\big(\P[  |\sqrt{\Gamma_n} \nu_n(\mathcal{A}\varphi) | \geq a ]\big), \ \theta\in (1/3,1],\\
\log\big(\P[  | \sqrt{\Gamma_n}\nu_n(\mathcal{A}\varphi)+({\mathcal B}_{n,1}-E_n^1)| \geq a
]\big),\ \theta=1/3,
\end{cases}$ is such that for 
$a>a_n:=a_n(\theta) $ where for $\theta\in (1/3,1] $, $a_n(\theta)=0$ and
 for $\theta=1/3 $, $a_n(\theta)=\frac{ [\varphi^{(3)}]_{\beta}  \|\sigma\|_\infty^{(3+\beta)} \E\big[|U_1|^{3+\beta} \big] }{(1+\beta)(2+\beta)(3+\beta) }   \frac{\Gamma_n^{(\frac{3 +\beta}{2})}}{\sqrt{\Gamma_n}}$:
$$g_{n,\theta}(a)\le -c_n \frac{(a-a_n)^2}{2 \| \sigma \| _{\infty}^2 \| \nabla \varphi \|_{\infty}^2}+\log(2C_n).$$ 
We  plot in Figure~\ref{Nappe_hypo} the curves of $g_{n,\theta}$ for $\theta$ varying as $\theta_j= \frac{1}{3}+(1-\frac{1}{3})\frac{j}{5}$, for $j\in [\![1,5 ]\!],\ \varphi(x)=x+\varepsilon \cos(x)$ and in Figure~\ref{Nappe_hypo_Lp} the curve of $g_{n,\theta}$ for $\theta=\theta_0= \frac{1}{3}$ and $\varphi(x)=\cos(x) $.
The simulations have been performed for $n=5 \times 10^4$ in Figure~\ref{Nappe_hypo}, $n=5\times 10^6$ in Figure~\ref{Nappe_hypo_Lp},  and the probability estimated by Monte Carlo simulation for $MC=10^4$ realizations of the random variable $ |\sqrt{\Gamma_n} \nu_n(\mathcal{A}\varphi) | $ in the unbiased case and in the biased case of the random variable $|\sqrt{\Gamma_n} \nu_n(\mathcal{A}\varphi)+({\mathcal B}_{n,1}-E_n^1)^M |$, where $({\mathcal B}_{n,1}-E_n^1)^M$ is obtained from ${\mathcal B}_{n,1}-E_n^1 $ replacing the integral over [0,1], that needs to be evaluated at every time step, by a quantization of the uniform law on $[0,1] $ with $M=10$ points. We refer to \cite{graf:lush:00} or \cite{page:97} for details on quantization. We point out that this is one drawback that appears to obtain the fastest convergence rate, the bias needs to be estimated and therefore the function $\varphi $ in some sense known (since the approximation of the bias requires to compute its derivatives). The corresponding $95\% $ confidence intervals have size at most of order $0.016$. 
To compare with, we also introduce the functions 
$S_{n,\theta}(a):=-\frac{(a-a_n(\theta))^2}{2 \| \sigma \| _{\infty}^2 \| \nabla \varphi \|_{\infty}^2}$, $S_{n,\theta,c}(a):=-\frac{(a-a_n(\theta))^2}{2 \nu_{n_c}(\sigma^2 ) \| \nabla \varphi \|_{\infty}^2}$, $S_{n,\theta,A}(a):=-\frac{(a-a_n(\theta))^2}{2 \nu_{n_c}( | \sigma  \nabla \varphi |^2 )} $ and the optimal concentration $P(\lambda_{\min})(n,\theta,a,\rho)$, 
obtained in Remark~\ref{LA_RQ_QUI_CONCENTRE}, optimizing numerically in $\rho $. The quantities $\nu_{n_c}(\sigma^2), \nu_{n_c}( | \sigma  \nabla \varphi|^2) $ in the previous expressions actually correspond to the numerical estimation, for $n_c=10^4$ and 
$(\gamma_k^c)_{k\ge 1} = (k^{-\theta^c})_{k\ge 1}$ with $ \theta^c=\frac 13+10^{-3} $, of $\nu(\sigma^2), \nu(|\sigma \nabla \varphi|^2) $ appearing respectively in the sharper concentration bound of Theorem~\ref{THM_COBORD} when $\sigma^2 - \nu(\sigma^2)$ is a coboundary and in the asymptotic Theorem~\ref{theo}.  
In the unbiased case of Figure~\ref{Nappe_hypo}, 
we plot the maximum in $j $ of the $(S_{n,\theta_j})_{j\in \leftB 1,5\rightB},(S_{n, \theta_j,c})_{j\in \leftB 1,5\rightB}, (S_{n, \theta_j,A})_{j\in \leftB 1,5\rightB}, \big(P(\lambda_{\min})(n,\theta_j,a,\rho)\big)_{j\in \leftB 1,5\rightB}$ corresponding to $j=1$. The associated curves are denoted by $S_n,S_{n,c}, S_{n,A} $ and $P(\lambda_{\min})(n) $.

The Figures~\ref{Nappe_hypo} and~\ref{Nappe_hypo_Lp} correspond to \textcolor{black}{the} unbiased and biased cases respectively. In the \textcolor{black}{un}biased case, we observe that the curves almost overlay, the optimal deviation rate $P(\lambda_{\min}) $ is very close to the empirical data. It is also below the numerical estimation of the asymptotic threshold given by $S_{n,\theta,A} $ which is, for our considered example, almost indistinguishable from the coboundary $S_{n,\theta,c}$ (indeed, since $\varepsilon=0.01, \|\nabla \varphi\|_\infty^2\le 1+\varepsilon^2$ and  $\nu(\sigma^2)\|\nabla \varphi\|_\infty^2 \simeq \nu ( |\sigma \nabla \varphi|^2) $) and far below from the bounds of $S_{n,\theta} $. 
In the biased case, 
$P(\lambda_{\min}) $ stays very close to the theoretical asymptotic bound given by $S_{n,\theta,A} $ up to a certain deviation level $a$, namely for $a\in [0, 0.5]$. It then remains the best bound provided by our results. In this example, the improvement associated with $S_{n,\theta,c}$ is also notable. It is precisely because the source term has a more oscillating gradient that we have also considered a larger running time, corresponding to $n=10^6 $, for the empirical curves. For this choice, we see relatively good agreement w.r.t. to the asymptotic deviation bounds of $S_{n,\theta_0,A} $.

The figures below thus illustrate that the explicit \textit{optimal rate} of Remark~\ref{LA_RQ_QUI_CONCENTRE} seems rather appropriate to capture the deviations of the empirical random measures.

\begin{figure}[!h]
\begin{minipage}[b]{.46\linewidth}
\includegraphics[scale=.75]{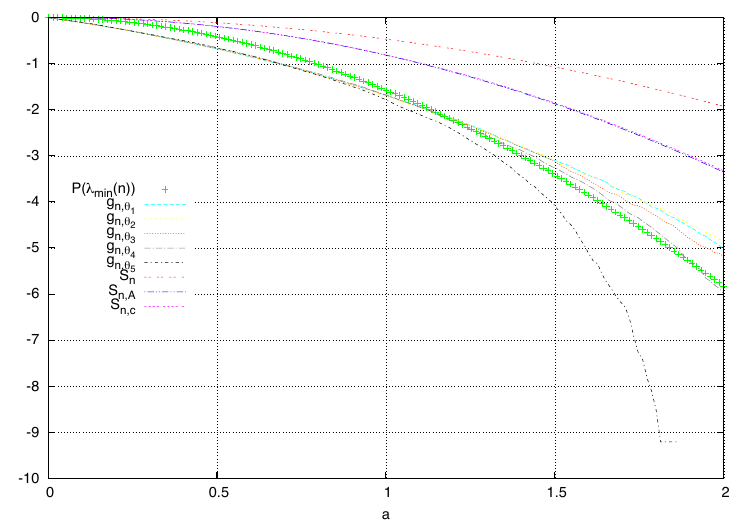}
\caption{Unbiased Case. Plot of $a \mapsto g_{n,\theta}(a)$, for $(\theta_k) 
_{k\in [\![1,5 ]\!]}$, with $\varphi(x)\!=\!\sigma(x) \!=\!x+\varepsilon \cos(x),\,\varepsilon=0.01$.}
\label{Nappe_hypo}
\end{minipage}\hfill
\begin{minipage}[b]{.46\linewidth}
\includegraphics[scale=.75]{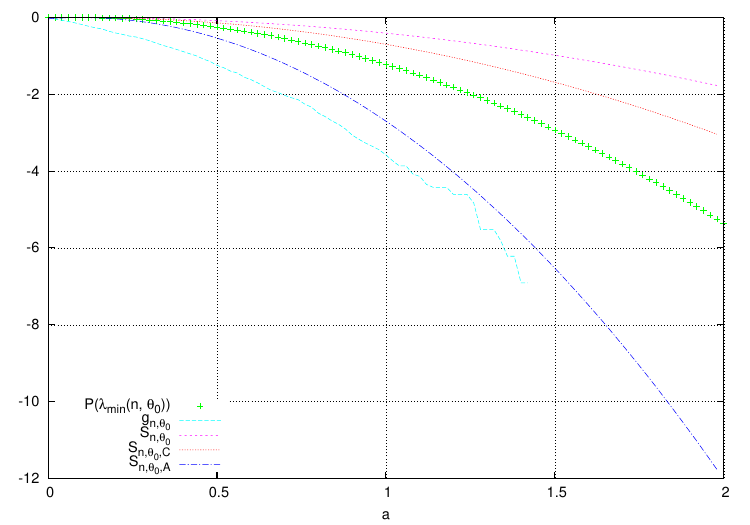}
\caption{Biased Case. Plot of $a \mapsto g_{n,\theta}(a)$, for $\theta_0= \frac{1}{3}$, with $\varphi(x)=\sigma(x) = \cos(x)$.}
\label{Nappe_hypo_Lp}
\end{minipage}
\end{figure}
We eventually plot below the deviation curves with source $\varphi(x)=\cos(x) $ adding a last curve obtained replacing in the formula for $P(\lambda_{\min}) $ of Remark~\ref{LA_RQ_QUI_CONCENTRE} the $\|\nabla \varphi\|_\infty^2\nu(\sigma^2) $ by $\nu(|\sigma \nabla \varphi|^2)  $. For practical purposes, this last quantity is again estimated numerically with the same previous parameters. 
\begin{figure}[!h]
\begin{minipage}[b]{.46\linewidth}
\includegraphics[scale=.75]{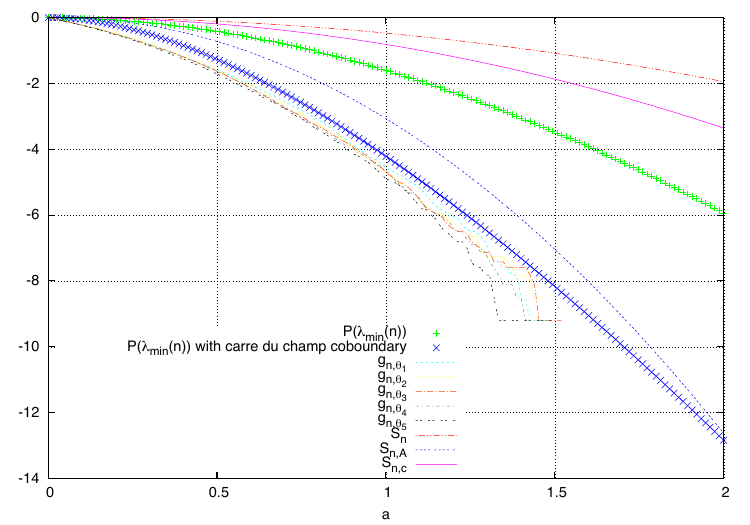}
\caption{Plot of $a \mapsto g_{n,\theta}(a)$, for $(\theta_k)_{k\in [\![1,5 ]\!]}$, with $\varphi(x)\!=\!\sigma(x) \!=\! \cos(x)$.}
\label{COURBE_FATALE}
\end{minipage}
\end{figure}
Even if the analysis of Theorem~\ref{THM_COBORD} cannot be extended to justify such a choice, the empirical evidence is rather striking. 

\subsubsection{Slutsky like result}
In this paragraph, we illustrate our results from Theorem~\ref{Slutsky_theorem}, which can be viewed as an extension of the usual  Slustky's Lemma to our current framework, for a multidimensional process, precisely for $r=d=2$ in the case $\beta\in (0,1) $.
In order to converge as fast as possible without bias, we take $\theta=\frac 1{2+\beta}+\frac 1{1000} $.
We also choose a  model which satisfies the assumptions of Theorem~\ref{Poisson_regular} \textcolor{black}{under \A{C${}_{\rm \mathbf{UE}}$}} and Lemma~\ref{PT_GR_BD}. We consider:
\begin{equation*}
f(x)=\frac{|x|^{1+\beta}}{1+|x|^\beta}, \
b(x)=
\begin{pmatrix}
-4x_1+6x_2 \\
-5x_1-5x_2
\end{pmatrix}, 
\ 
\sigma \sigma^*(x)=
\begin{pmatrix}
\frac{\cos(x_1+x_2)}2+1 & \frac{\sin(x_1)\sin(x_2)}4\\
\frac{\sin(x_1)\sin(x_2)}4 &1- \frac{\sin(x_2)}2
\end{pmatrix}.
\end{equation*}
Remark that the non-degeneracy condition \A{UE} is fulfilled by $\Sigma=\sigma \sigma^*$, 
as well as the condition set in Theorem~\ref{Poisson_regular} \textcolor{black}{under \A{C${}_{\rm \mathbf{UE}}$}}, $\Sigma_{i,j}(x)=\Sigma_{i,j}(x_i, \hdots, x_d)$, for all $1 \leq i \leq j \leq d$.
Furthermore, from the Cholesky decomposition, we write:
 \begin{equation*}
\sigma (x)=
\begin{pmatrix}
\sqrt{\frac{\cos(x_1+x_2)}2+1 }&0\\
\frac{\sin(x_1)\sin(x_2)}{4 \sqrt{\frac{\cos(x_1+x_2)}2+1 }} & \sqrt{- \frac{\sin(x_1)^2 \sin(x_2)^2}{16 (\frac{\cos(x_1+x_2)}2+1)}+ 1- \frac{\sin(x_2)}2}
\end{pmatrix}.
\end{equation*}
Let us check that \A{D${}_\alpha^p$} is satisfied. Firstly, remark that $\frac{Db+Db^*}2$ is a constant matrix whose  eigenvalues are $\{-\frac{\sqrt{2}+9}2, \frac{\sqrt{2}-9}2 \}$. Direct computations yield that,
for all $x\in \R^d $, $\xi\in \R^d $:
\begin{equation*}
\left \langle \frac {Db(x)+Db(x)^*} 2\xi,\xi \right\rangle +\frac 12 \sum_{j=1}^r |D\sigma_{\cdot j} (x) 
\xi|^2 \le 
-3.085 |\xi|^2.
\end{equation*}
It can be checked similarly that the condition $\|D\sigma\|_\infty^2 \le \frac{2\alpha}{2(1+\beta)-p} $ is satisfied for $\alpha=3.085$ and $ \beta=.5 $ which we consider below.
Also, the condition \A{R${}_{1,\beta}$} clearly holds.
In other words, all assumptions of Theorem~\ref{Slutsky_theorem} are in force.
We set for the following plot:
\begin{equation*}
g^\sigma_n(a)= \log \P[\sqrt{\Gamma_n} |\nu_nf) | \geq a ], \ S_\sigma(a)= - \frac{a^2 \alpha^2}{2 [f]_1^2},
\end{equation*}
with $\alpha=3.085$, and $[f]_1=1$.

Unlike in the previous simulations, we do not know here the value of $\nu(f)$. In fact, in paragraph~\ref{NUM_1} we had chosen to compute the deviation of $\mathcal A \varphi$ from $0= \nu(\mathcal A \varphi)$.
Here, we estimate from the ergodic theorem $\nu(f)$, taking $\beta=.5 $,  by $\nu_{n^c}(f) \approx 0.71308$ for $n^c=5\cdot 10^5$. Running $MC=10^2 $ samples, we find that the size of the associated $95\% $ confidence interval is $3.208 \cdot10^{-4}$. 
Finally, the simulations are performed for $n=5\times 10^4$, and the probability is calculated by Monte Carlo algorithm for $MC=10^3$ realizations.
The maximum size of the associated $95\% $ confidence interval is $4.75054\cdot 10^{-5}$.
The innovations are Gaussian random variables.
\begin{figure}[!h]
\centering
\begin{minipage}[b]{.46\linewidth}
\centerline{\includegraphics[scale=.65]{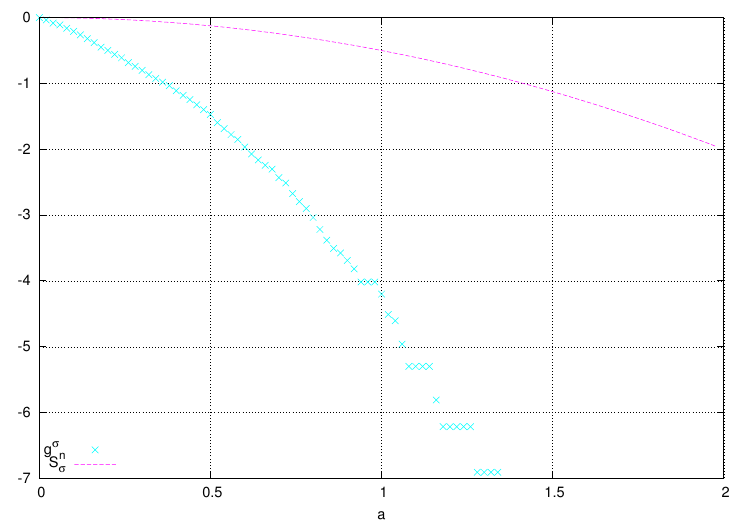}}
\caption{Plot of $a \mapsto g_{n}(a)$ with $f(x)=\frac{|x|^\beta}{1+|x|^\beta},\ \beta=.5$.}
\label{Nappe_Slutsky}
\end{minipage}
\end{figure}

In Figure~\ref{Nappe_Slutsky}, we observe that the curve $S_\sigma$ stays above $g^\sigma_n$ as proved in Theorem~\ref{Slutsky_theorem}.
However, remark that the graphs are quite spaced. This can be explained, among other things, by the difference between $\nu(\|\sigma\|^2) \frac{[f]_1}\alpha$ and the asymptotic variance $\nu(|\sigma^* \nabla \varphi|^2)$. Furthermore we have represented $S_\sigma$ which is a kind of asymptotic version of $P(\lambda_{\min}(n))$ in the previous plots.

\section*{Acknowledgments}
For the second author, the article was prepared within the framework of a subsidy granted to the HSE by the Government of the Russian Federation for the implementation of the Global Competitiveness Program.


\bibliographystyle{alpha}
\small
\bibliography{bibli}

\end{document}